\documentclass[11pt, reqno]{amsart}

\usepackage[utf8]{inputenc}
\usepackage[margin= 1in]{geometry}
\usepackage{graphicx}
\usepackage{caption}
\usepackage{subcaption}
\usepackage{mathtools}
\usepackage{parskip}
\usepackage[colorlinks=true, linkcolor=blue, citecolor=blue]{hyperref}
\usepackage[noabbrev,capitalise,nameinlink]{cleveref}
\usepackage{hyperref}
\crefname{figure}{figure}{}
\usepackage{xcolor}
\usepackage{amssymb}

\newtheorem{theorem}{Theorem}[section]
\newtheorem{proposition}[theorem]{Proposition}
\newtheorem{corollary}[theorem]{Corollary}

\newtheorem{problem}[theorem]{Problem}
\newtheorem{lemma}[theorem]{Lemma}

\theoremstyle{definition}
\newtheorem{definition}[theorem]{Definition}
\newtheorem{remark}[theorem]{Remark}
\newtheorem{example}[theorem]{Example} 

\begin{document}

\title{On some posets and lattices with the same height}

\author{Hoan La}

\email{hoan.la.work@gmail.com}

\begin{abstract}
    For a finite poset $\mathcal{P}$, its height $h(\mathcal{P})$ is the number of cover relations in its longest chain. When $\mathcal{P}$ is a lattice $\mathcal{L}$, we label its elements $x$ with $h(x_\downarrow) = h([\hat{0},x])$ and its cover relations $x \lessdot y$ with $h(y_\downarrow) - h(x_\downarrow)$. When a lattice $\mathcal{L}'$ extends $\mathcal{L}$, $h(x_\downarrow)_\mathcal{L} \leq h(x_\downarrow)_{\mathcal{L}'}$. We study lattices $\mathcal{L}$ and $\mathcal{L}'$ such that $h(x_\downarrow)_\mathcal{L} = h(x_\downarrow)_{\mathcal{L}'}$. We show that cover relations labeled $1$ in $\mathcal{L}$ are all cover relations that $\mathcal{L}$ and $\mathcal{L}'$ have in common. These cover relations induce a poset that we call the (long) skeletal poset $\mathrm{SK}(\mathcal{L})$. The Hasse diagram of $\mathrm{SK}(\mathcal{L})$ is the largest spanning subgraph that the Hasse diagrams of $\mathcal{L}$ and $\mathcal{L}'$ have in common. An example of lattices $\mathcal{L}$ and $\mathcal{L}'$ is the alt-Tamari lattices introduced by Chenevière, where every alt-Tamari lattice $\mathrm{alt}\text{-}\mathrm{Tam}_n$ extends the Tamari lattice $\mathrm{Tam}_n$/refines the Dyck lattice $\mathrm{Dyck}_n$ such that $h(x_\downarrow)_{\mathrm{Tam}_n} = h(x_\downarrow)_{\mathrm{alt}\text{-}\mathrm{Tam}_n}$.

    We study $\mathrm{SK}(\mathrm{Tam}_n)$ with another poset we introduce. We enumerate intervals in these posets. For a well-chosen distributive lattice, we introduce its altitude lattices, which generalize the alt-Tamari lattices $\mathrm{alt}\text{-}\mathrm{Tam}_n$ even in the case of the Dyck lattice. Altitude lattices within a family have the same number of linear intervals. They are related to each other via extensions, refinements, and embeddings of some skeletal posets. For a poset $\mathcal{P}$ with $\hat{0}$, we define its Kneser graphs $KG(k) := (V(k),E)$, where $V(k) := \{x: h(x_\downarrow) = k, 1 \leq k \leq h(\mathcal{P})\}$ and $E := \{(x,y): x_\downarrow \cap y_\downarrow =\hat{0}\}$. We give some observations about them in a reconstruction setting.
\end{abstract}

\maketitle

\section{Introduction}
\label{sec:intro} 

We recall some poset terminology as well as definitions with examples. The familiar reader may skip to \Cref{subsec:height}. A \emph{poset} $\mathcal{P} := (S, \leq)$ is a binary relation $\leq$ on the set $S$ such that for any $x,y,z \in S$: $x \leq x$; $x \leq y, y \leq x$ implies $x=y$; and $x \leq y, y \leq z$ implies $x \leq z$. Elements $x$ and $y$ are \emph{comparable} if and only if $x \leq y$ or $y \leq x$. They are \emph{incomparable} otherwise. If all elements of $\mathcal{P}$ are pairwise comparable, then $\mathcal{P}$ is a \emph{totally ordered set} or a \emph{chain} $\mathcal{C}$.

Consider $\mathbb{N} := \{1, 2, \dots \}$. For any $x,y \in \mathbb{N}$, either $x \leq y$ or $y \leq x$. $(\mathbb{N}, \leq)$ is an example of a totally ordered set. $\mathbb{N}$ can be equipped with other orderings such as divisibility, which we denote as $(\mathbb{N}, \mid)$. In $(\mathbb{N}, \mid)$, $3 \leq 6$ as $3 \mid 6$ but $4 \not\leq 6$ as $4 \nmid 6$. $(\mathbb{N}, \mid)$ is an example of a poset.

\begin{figure}[htp]
    \centering
    \includegraphics[width=0.25\linewidth]{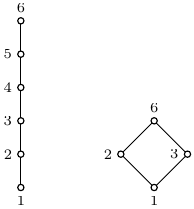}
    \caption{The interval $[1,6]$ in the posets $(\mathbb{N}, \leq)$ (left) and $(\mathbb{N}, \mid)$ (right).}
    \label{fig:N}
\end{figure}  

An interval $\mathcal{I}$ in $\mathcal{P}$ is denoted $[x,y] := \{z \in \mathcal{P}: x \leq z \leq y\}$. A cover relation $x \lessdot y$ is the interval $[x,y] := \{x,y\}$. The \emph{Hasse diagram} of a poset $\mathcal{P}$ is a directed acyclic graph $(S,\lessdot)$ with edges $x \lessdot y$, and $\mathcal{P}$ is the \emph{reflexive transitive closure} of its cover relations $\lessdot$.

$\mathcal{P} := (S, \leq_\mathcal{P})$ is a \emph{refinement} of $\mathcal{Q} := (S, \leq_\mathcal{Q})$ or $\mathcal{P}$ \emph{refines} $\mathcal{Q}$ if and only if $x \leq_{\mathcal{P}} y$ implies $x \leq_{Q} y$. Equivalently, $\mathcal{Q}$ is an \emph{extension} of $\mathcal{P}$ or $\mathcal{Q}$ \emph{extends} $\mathcal{P}$. Using our examples $(\mathbb{N}, \leq)$ and $(\mathbb{N}, \mid)$, $(\mathbb{N}, \leq)$ extends $(\mathbb{N}, \mid)$ as $x \mid y$ implies $x \leq y$.

A \emph{lattice} $\mathcal{L} := (\mathcal{P}, \wedge, \vee)$ is a poset $\mathcal{P}$ such that every pair of elements $x$ and $y$ has a \emph{meet} $x \wedge y$ and a \emph{join} $x \vee y$. To define $\wedge$ and $\vee$, we need the \emph{down set} $x_{\downarrow} := \{z \in S, z\leq x\}$ of an element $x$ and its \emph{upset} $x_\uparrow := \{z \in S, z \geq x\}$. Then, $x \wedge y$ is the unique maximal element of $x_\downarrow \cap y_\downarrow$, and $x \vee y$ is the unique minimal element of $x_\uparrow \cap y_\uparrow$. The existence of $\wedge$ and $\vee$ implies that finite lattices have a unique minimal element $\hat{0}$ and a unique maximal element $\hat{1}$, i.e., $\mathcal{L} = [\hat{0}, \hat{1}] = \hat{1}_\downarrow$. When $\mathcal{P}$ has $\wedge$ or $\vee$, then $\mathcal{P}$ is a \emph{semilattice}.

The posets $(\mathbb{N}, \leq)$ and $(\mathbb{N}, \mid)$ are examples of lattices. Explicitly, for $(\mathbb{N}, \leq)$, $x \wedge y := min(x,y)$ and $x \vee y := max(x,y)$. For $(\mathbb{N}, \mid)$, $x \wedge y := gcd(x,y)$ and $x \vee y := lcm(x,y)$. Besides $(\mathbb{N}, \leq)$ and $(\mathbb{N}, \mid)$, posets in this paper are finite.

\subsection{The height of a lattice}
\label{subsec:height}

The goal of this paper is to study finite lattices $\mathcal{L}$ via a \emph{height} (or length) function $h: \mathbb{\mathcal{P}} \rightarrow \mathbb{Z}_{\geq 0}$. We define $h$ and list some simple properties below.

\begin{proposition}[Properties of the height of a poset]
\label{prop:height}
    Let $h$ be a function $h: \mathbb{\mathcal{P}} \rightarrow \mathbb{Z}_{\geq 0}$ from a poset $\mathcal{P}$ to the nonnegative integers $\mathbb{Z}_{\geq 0}$ such that $h(\mathcal{P})$ is the number of cover relations in a longest chain in $\mathcal{P}$. The following hold for $x,y \in \mathcal{P}$:
    \begin{itemize}
        \item[(i)] if $h(x_\downarrow) = h(y_\downarrow)$, then $x=y$ or $x$ and $y$ are incomparable,
        \item[(ii)] $h([x,y]) \leq  h(y_\downarrow) - h(x_\downarrow)$,
        \item[(iii)] if a poset $\mathcal{Q}$ extends $\mathcal{P}$, then $h(x_\downarrow)_\mathcal{P} \leq h(x_\downarrow)_\mathcal{Q}$.
    \end{itemize}
\end{proposition}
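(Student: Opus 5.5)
All three parts follow by manipulating longest chains. The basic observation is that a longest chain in a down set $x_\downarrow$ may be assumed to end at $x$. If a maximal-length chain in $x_\downarrow$ did not contain $x$, then $x$ could be appended on top, since $x$ is the maximum of $x_\downarrow$. This would lengthen the chain and contradict maximality. In the same way, any longest chain in an interval $[x,y]$ runs from $x$ to $y$. I will use this normalization throughout, identifying $h$ of a subposet with the number of steps of a longest chain in it; in a chain whose consecutive elements are covers, steps and cover relations coincide.

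For (ii), take a longest chain $x_\downarrow \ni x_0 < x_1 < \dots < x_a = x$ with $a = h(x_\downarrow)$, and a longest chain $x = y_0 < \dots < y_b = y$ in $[x,y]$ with $b = h([x,y])$. Concatenating them gives a chain in $y_\downarrow$ with $a+b$ steps. Hence $h(y_\downarrow) \geq h(x_\downarrow) + h([x,y])$, which is the claim. If $x \not\leq y$ the interval is empty; then the statement needs a convention, and I would note that (ii) is intended for $x \leq y$. The same concatenation, applied when $x < y$ so that $h([x,y]) \geq 1$, gives the strict inequality $h(x_\downarrow) < h(y_\downarrow)$.

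For (i), I argue by contrapositive. If $x \neq y$ and they are comparable, say $x < y$, then the strict inequality just derived shows $h(x_\downarrow) \neq h(y_\downarrow)$. So equal heights force $x = y$ or incomparability.

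For (iii), let $\mathcal{Q}$ extend $\mathcal{P}$ on the same ground set. Then $z \leq_\mathcal{P} x$ implies $z \leq_\mathcal{Q} x$, so the down set of $x$ in $\mathcal{P}$ is contained in its down set in $\mathcal{Q}$. Moreover, any $\mathcal{P}$-chain $z_0 <_\mathcal{P} \dots <_\mathcal{P} z_a = x$ is also a $\mathcal{Q}$-chain. The only subtlety, and the main point needing care, is that $\mathcal{P}$-covers need not be $\mathcal{Q}$-covers. However, a $\mathcal{Q}$-chain with $a$ strict steps can be refined to a saturated $\mathcal{Q}$-chain by inserting elements, which only increases the number of cover relations. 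Therefore $h(x_\downarrow)_\mathcal{Q} \geq a = h(x_\downarrow)_\mathcal{P}$.
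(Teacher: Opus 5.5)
Your proof is correct. It follows the same basic idea as the paper (compare longest chains), but the order of the parts and the argument for (ii) differ, and the differences affect rigor.

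For (ii), the paper fixes a longest chain $\mathcal{C}$ of $y_\downarrow$ and splits on whether $x \in \mathcal{C}$. It claims equality when $x \in \mathcal{C}$ and strict inequality otherwise. The second case is false for an arbitrary fixed $\mathcal{C}$. In the Boolean lattice with atoms $p,q$, the longest chain $\hat 0 < q < \hat 1$ misses $x=p$, yet $h(p_\downarrow)+h([p,\hat 1]) = 2 = h(\hat 1_\downarrow)$.

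Your concatenation of a longest chain of $x_\downarrow$ ending at $x$ with a longest chain of $[x,y]$ starting at $x$ gives the inequality directly, with no case split. The paper's case split is really aimed at the equality case: equality holds iff $x$ lies on \emph{some} longest chain of $y_\downarrow$, and the paper reuses this in its lemma on cover relations. With your concatenation in hand, that characterization follows immediately by splitting such a chain at $x$.

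For (i), the paper simply asserts that distinct elements of equal height lie outside each other's down sets. Your strict inequality $h(x_\downarrow) < h(y_\downarrow)$ for $x<y$ is exactly the missing justification, which is why proving (ii) first is the better order.

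For (iii), both arguments rest on $\mathcal{P}$-chains remaining $\mathcal{Q}$-chains. The paper states this only loosely ("$\mathcal{Q}$ has more intervals") at the level of whole posets. You apply it correctly to the down set of $x$, noting that $x_\downarrow$ in $\mathcal{P}$ sits inside $x_\downarrow$ in $\mathcal{Q}$.

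Your caveat on (ii) is also warranted. For $y<x$ the right-hand side is negative, so the restriction $x \le y$ is genuinely needed.
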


\begin{proof}
    (i) Let $h(x_\downarrow) = h(y_\downarrow)$. Then, either $x=y$ or $x \neq y$. If $x \neq y$, then $x \not\in y_\downarrow$ and $y \not\in x_\downarrow$. So, $x$ and $y$ are incomparable. (i) holds as claimed.

    (ii) Consider an interval $[x,y]$ and $\mathcal{C}$ a longest chain in $y_\downarrow$. If $x \in \mathcal{C}$, then $[x,y]$ is an interval in $\mathcal{C}$, and $h(x_\downarrow) + h([x,y]) = h(y_\downarrow)$. If $x \not\in \mathcal{C}$, $h(x_\downarrow) + h([x,y]) < h(y_\downarrow)$. It follows that $h(x_\downarrow) + h([x,y]) \leq h(y_\downarrow) $. Rearranging the inequality gives (ii). 

    (iii) Since $\mathcal{Q}$ has more intervals than $\mathcal{P}$, $h(\mathcal{P}) \leq h(\mathcal{Q})$ and (iii) follows.
\end{proof}

Our motivation for using the height function $h$ to study lattices comes from the family of altitude-Tamari or alt-Tamari lattices $\mathrm{alt}\text{-}\mathrm{Tam}_n$ introduced by Chenevière~\cite{C1}. For this paper, it is not necessary to know what all of these lattices are. It suffices to know that they extend the Tamari lattice $\mathrm{Tam}_n$~\cite{C1}, a lattice introduced by Tamari~\cite{Tam}, and that $h(x_\downarrow)_{\mathrm{Tam}_n} = h(x_\downarrow)_{\mathrm{alt}\text{-}\mathrm{Tam}_n}$ (e.g., see \Cref{fig:sk_tam_3}). We show that $h(x_\downarrow)_{\mathrm{Tam}_n} = h(x_\downarrow)_{\mathrm{alt}\text{-}\mathrm{Tam}_n}$ in \Cref{sec:tam}.

For lattices $\mathcal{L}$ and $\mathcal{L}'$ such that $h(x_\downarrow)_\mathcal{L} = h(x_\downarrow)_{\mathcal{L}'}$, we observe that they have cover relations in common. Using $h$, we characterize what these cover relations are.

\begin{lemma}[Characterizing cover relations in lattices]
\label{lem:cov}
    Let $x \lessdot y$ be a cover relation in a lattice $\mathcal{L}$. We label $x \lessdot y$ with $h(y_\downarrow) - h(x_\downarrow)$. The following hold for cover relations $x \lessdot y$:
    \begin{itemize}
        \item if $\mathcal{L}$ has at least three elements, then $1 \leq h(y_\downarrow) - h(x_\downarrow) \leq h(\mathcal{\mathcal{L}}) - 1$,
        \item $x \lessdot y$ is in a longest chain in $y_\downarrow$ if and only if $h([x,y]) = h(y_\downarrow) - h(x_\downarrow) = 1$,
        \item let a lattice $\mathcal{L}'$ extends $\mathcal{L}$, then $h(x_\downarrow)_\mathcal{L} = h(x_\downarrow)_{\mathcal{L}'}$ if and only if the cover relations $x \lessdot y$ labeled $1$ in $\mathcal{L}$ are also cover relations in $\mathcal{L}'$, i.e., these cover relations are those that $\mathcal{L}$ and $\mathcal{L}'$ have in common. 
    \end{itemize}
\end{lemma}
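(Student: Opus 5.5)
The plan is to take the three items in order, using only \Cref{prop:height} and the definition of $h$. The first two items are short arguments about chains. The third splits into two directions, and I expect the reverse direction to be the real difficulty.

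\textbf{First item.} For a cover $x \lessdot y$ we have $x<y$, so $x \in y_\downarrow$. Appending $y$ to a longest chain of $x_\downarrow$ gives $h(y_\downarrow) \geq h(x_\downarrow)+1$; equivalently, apply \Cref{prop:height}(ii) with $h([x,y])=1$. For the upper bound, note $h(y_\downarrow) \leq h(\mathcal{L})$ and $h(x_\downarrow) \geq 0$. Suppose the label equals $h(\mathcal{L})$. Then $h(x_\downarrow)=0$, which forces $x=\hat{0}$, and $h(y_\downarrow)=h(\mathcal{L})$. This forces $y=\hat{1}$: if $y<\hat{1}$, appending $\hat{1}$ would give a chain longer than $h(\mathcal{L})$. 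Then $\hat{0} \lessdot \hat{1}$, which contradicts the existence of a third element $z$ with $\hat{0}<z<\hat{1}$.

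\textbf{Second item.} Suppose $x \lessdot y$ lies in a longest chain $\mathcal{C}$ of $y_\downarrow$. The part of $\mathcal{C}$ below $x$ is a chain in $x_\downarrow$, so
\[
h(y_\downarrow) = |\mathcal{C}\cap x_\downarrow|-1+1 \leq h(x_\downarrow)+1 .
\]
Combined with the first item, the label is exactly $1$. Conversely, if the label is $1$, append $y$ to a longest chain of $x_\downarrow$. The result is a chain of length $h(x_\downarrow)+1=h(y_\downarrow)$ in $y_\downarrow$, hence a longest one, and it contains $x \lessdot y$. The equality $h([x,y])=1$ is just the statement that $x \lessdot y$ is a cover.

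\textbf{Third item, forward direction.} Assume $h(\cdot_\downarrow)_\mathcal{L}=h(\cdot_\downarrow)_{\mathcal{L}'}$, and let $x \lessdot y$ have label $1$ in $\mathcal{L}$. Since $\mathcal{L}'$ extends $\mathcal{L}$, we still have $x<'y$. If this were not a cover in $\mathcal{L}'$, there would be some $z$ with $x<'z<'y$. Then
\[
h(y_\downarrow)_{\mathcal{L}'} \geq h(x_\downarrow)_{\mathcal{L}'}+2 = h(x_\downarrow)_\mathcal{L}+2 = h(y_\downarrow)_\mathcal{L}+1,
\]
a contradiction. I also want to show that covers with label at least $2$ in $\mathcal{L}$ are \emph{not} covers of $\mathcal{L}'$, so that the label-$1$ covers are exactly the common ones. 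Take a cover $x \lessdot y$ in $\mathcal{L}$ with $h(y_\downarrow)_\mathcal{L} \geq h(x_\downarrow)_\mathcal{L}+2$. Take a longest $\mathcal{L}$-chain in $y_\downarrow$. By the second item its last step $w \lessdot y$ has label $1$, so by the forward direction it is an $\mathcal{L}'$-cover. I would then try to use $h_{\mathcal{L}'}=h_\mathcal{L}$ to show $x <' w$, which would break $x \lessdot' y$. I am not sure this works as stated.

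\textbf{Third item, reverse direction (main obstacle).} Here one must show $h(x_\downarrow)_{\mathcal{L}'} \leq h(x_\downarrow)_\mathcal{L}$; the reverse inequality is \Cref{prop:height}(iii). A longest $\mathcal{L}'$-chain may use covers $u \lessdot' v$ between elements that are incomparable in $\mathcal{L}$, and the hypothesis only controls label-$1$ covers of $\mathcal{L}$. My first attempt will be strong induction on $h(x_\downarrow)_{\mathcal{L}'}$. I would take the last step $u \lessdot' x$ of a longest $\mathcal{L}'$-chain and try to compare it with a label-$1$ $\mathcal{L}$-cover $w \lessdot x$, which is also an $\mathcal{L}'$-cover by hypothesis. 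The aim is to bound $h(u_\downarrow)_{\mathcal{L}'}$ by $h(w_\downarrow)_{\mathcal{L}}$, possibly through the meet $u \wedge' w$. If this fails, I will look for a small counterexample and, if one exists, read the item as the statement that the common covers are exactly the label-$1$ ones. Under that reading the forward direction above, together with the non-cover claim for label at least $2$, suffices.
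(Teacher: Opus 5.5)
Your first two items are correct. Your upper bound in the first item is sounder than the paper's, which only looks at a chain $\hat{0}\lessdot z\lessdot\hat{1}$ that need not exist. The forward half of the third item is also correct. The genuine gap is the reverse half of the third item, which you leave open. It cannot be closed, because it is false.

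Take $\mathcal{L}$ to be the lattice made of the two chains $\hat{0}\lessdot a_1\lessdot q\lessdot r\lessdot\hat{1}$ and $\hat{0}\lessdot a_2\lessdot p\lessdot\hat{1}$. Let $\mathcal{L}'$ be obtained by adding the relation $p<q$. Then $\mathcal{L}'$ is the pentagon $\{\hat{0},a_1,a_2,p,q\}$ with the chain $q\lessdot r\lessdot\hat{1}$ on top, so it is a lattice extending $\mathcal{L}$.

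In $\mathcal{L}$ every cover has label $1$ except $p\lessdot\hat{1}$, which has label $2$. All six label-$1$ covers remain covers of $\mathcal{L}'$, and they are exactly the covers common to both lattices. Yet $h(q_\downarrow)_{\mathcal{L}}=2<3=h(q_\downarrow)_{\mathcal{L}'}$, via the chain $\hat{0}\lessdot a_2\lessdot p\lessdot q$.

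This is the obstruction you anticipated. A longest $\mathcal{L}'$-chain uses a new cover $p\lessdot' q$ between $\mathcal{L}$-incomparable elements. The hypothesis only protects label-$1$ covers, and the one cover destroyed here has label $2$. Concretely, your induction fails at $x=q$: the last step $u=p$ has $h(p_\downarrow)_{\mathcal{L}'}=2$, while the label-$1$ cover $a_1\lessdot q$ has $h((a_1)_\downarrow)_{\mathcal{L}}=1$. The paper's justification of this direction, ``reading the argument backwards'', does not survive this example either.

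Your fallback does not rescue the item. The claim that covers of label at least $2$ in $\mathcal{L}$ are never covers of $\mathcal{L}'$ is false even when heights agree, and even for strict extensions. Take $\mathcal{L}$ with $\hat{0}\lessdot a\lessdot b\lessdot\hat{1}$, $\hat{0}\lessdot c\lessdot\hat{1}$ and $\hat{0}\lessdot d\lessdot\hat{1}$, and add $c<b$. All heights are unchanged, but the label-$2$ cover $d\lessdot\hat{1}$ is common to both lattices. The first counterexample also shows that ``the common covers are exactly the label-$1$ covers of $\mathcal{L}$'' does not force equal heights.

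What the third item can honestly assert is the single implication you proved. If $h(x_\downarrow)_{\mathcal{L}}=h(x_\downarrow)_{\mathcal{L}'}$ for all $x$, then every label-$1$ cover of $\mathcal{L}$ is a cover of $\mathcal{L}'$. State the item in that corrected form rather than searching for a proof of the converse.
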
 

\begin{figure}[htp]
    \centering
    \includegraphics[width=0.8\linewidth]{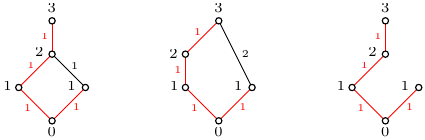}
    \caption{Examples of alt-Tamari lattices with the poset induced by \textcolor{red}{cover relations} in \textcolor{red}{red}. Elements are labeled with $h(x_\downarrow)$ and cover relations $x \lessdot y$ labeled with $h(y_\downarrow) - h(x_\downarrow)$. The leftmost lattice extends the one to its right. The rightmost poset is the induced poset.}
    \label{fig:sk_tam_3}
\end{figure}

\begin{proof}
    (i) Let $\hat{0} \lessdot z \lessdot \hat{1}$ be a chain in $\mathcal{L}$. By definition of $h$, $h(\hat{0} \lessdot z) = 1$. Since $h(\mathcal{L}) = h(\hat{1}_\downarrow)$ and $h(\hat{0} \lessdot z) = h(z_\downarrow)$, it follows that $h(z \lessdot \hat{1}) = h(\mathcal{L}) - h(\hat{0} \lessdot z) = h(\hat{1}_\downarrow) - h(z_\downarrow) = h(\mathcal{L}) - 1$. So, $1 \leq h(y_\downarrow) - h(x_\downarrow) \leq h(\mathcal{L}) - 1$.

    (ii) Suppose $x \lessdot y$ is in a longest chain $\mathcal{C}$ in $y_\downarrow$. Recall the inequality $h([x,y]) \leq h(y_\downarrow) - h(x_\downarrow)$ from (ii) of \Cref{prop:height} with $h([x,y]) = h(y_\downarrow) - h(x_\downarrow)$ if and only if $x \in \mathcal{C}$ for $\mathcal{C}$ a longest chain of $y_\downarrow$. We have $h(x \lessdot y) = h([x,y]) = h(y_\downarrow) - h(x_\downarrow) = 1$. Suppose $h([x,y]) = h(y_\downarrow) - h(x_\downarrow) = 1$. By definition of $h$, $[x,y] = x \lessdot y$. Using the argument in (ii) of \Cref{prop:height}, we have the claim as desired. 

    (iii) Suppose $\mathcal{L}'$ extends $\mathcal{L}$ such that $h(x_\downarrow)_\mathcal{L} = h(x_\downarrow)_{\mathcal{L}'}$, then there exists at least one longest chain $\mathcal{C}$ in $x_\downarrow$ such that $\mathcal{C}$ is in both $\mathcal{L}$ and $\mathcal{L}'$. By the previously proven (ii) above, all cover relations in the chains $\mathcal{C}$ are labeled $1$. Since the chains $\mathcal{C}$ are in both $\mathcal{L}$ and $\mathcal{L}'$, and $\mathcal{L}'$ extends $\mathcal{L}$, the cover relations labeled $1$ in $\mathcal{L}$ are cover relations in $\mathcal{L}$ and $\mathcal{L}'$. The backward direction, assuming $\mathcal{L}'$ extends $\mathcal{L}$, uses the same argument (by reading it backwards). We have the claims as desired.
\end{proof}

\begin{corollary}[Properties of the induced poset from cover relations labeled $1$]
\label{cor:sk}
    The cover relations labeled $1$ induce a meet-semilattice $\mathcal{P}$. The maximal chains in a down set of $\mathcal{P}$ are also the longest chains. Its Hasse diagram is the largest spanning (directed) subgraph that the Hasse diagrams of lattices $\mathcal{L}$ and $\mathcal{L}'$ have in common.
\end{corollary}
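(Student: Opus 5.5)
Let $\mathcal{P}$ denote the poset on the underlying set $S$ of $\mathcal{L}$ obtained as the reflexive transitive closure of the cover relations $x \lessdot y$ of $\mathcal{L}$ with label $h(y_\downarrow)-h(x_\downarrow)=1$. The plan is to first describe down sets in $\mathcal{P}$ in terms of longest chains of $\mathcal{L}$, and then deduce the three claims from that description.

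\emph{Step 1: chains in $\mathcal{P}$.} A saturated chain $x_0 \lessdot x_1 \lessdot \cdots \lessdot x_k$ of label-$1$ covers satisfies $h((x_i)_\downarrow)=h((x_0)_\downarrow)+i$. Every element $y$ has a longest chain in $y_\downarrow$ of $\mathcal{L}$ starting at $\hat{0}$. By the second item of \Cref{lem:cov}, each of its covers is labeled $1$. Hence $\hat{0}\leq_{\mathcal{P}} y$ for all $y$, so $\mathcal{P}$ has a minimum $\hat{0}$.

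\emph{Step 2: maximal chains in down sets are longest.} Take a maximal chain of $\mathcal{P}$ inside $y_\downarrow^{\mathcal{P}}$ from $\hat{0}$ to $y$. Its covers in $\mathcal{P}$ are label-$1$ covers of $\mathcal{L}$; a cover of $\mathcal{P}$ can only be a generating cover, since a composite step would have an intermediate element. So the chain has exactly $h(y_\downarrow)-h(\hat{0}_\downarrow)=h(y_\downarrow)$ steps. Any chain in $\mathcal{P}$ is a chain in $\mathcal{L}$, so no chain in $y_\downarrow^{\mathcal{P}}$ is longer. Thus all maximal chains are longest, and $h_{\mathcal{P}}(y_\downarrow)=h_{\mathcal{L}}(y_\downarrow)$; in other words, $\mathcal{P}$ is graded by $h$.

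\emph{Step 3: meet-semilattice.} Given $x,y$, the set of common lower bounds in $\mathcal{P}$ is nonempty because it contains $\hat{0}$. I need a unique maximal element. This is the main obstacle: $\mathcal{P}$ is not a lattice in general, and the meet in $\mathcal{L}$ need not lie below $x$ in $\mathcal{P}$. The first thing I would try is this: take two maximal common lower bounds $u,v$ and consider $w=u\vee_{\mathcal{L}} v$, which lies $\leq_{\mathcal{L}} x,y$. I would then use the third item of \Cref{lem:cov} with $\mathcal{L}'$, together with the gradedness from Step 2, to argue that $w$ is reachable from $u$ and $v$ and reaches $x$ and $y$ through label-$1$ covers. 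The expected mechanism is that a longest chain of $x_\downarrow$ through $w$ gives height equality. If this fails in general, I would fall back on the finite-case argument: the set of common lower bounds is a down set of a graded poset with $\hat{0}$. I would then show that it is the principal down set of the element of largest height, by checking that two elements of maximal height both lying below $x$ via longest chains must coincide, using the lattice $\mathcal{L}'$ as the ambient lattice.

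\emph{Step 4: Hasse diagram.} By the third item of \Cref{lem:cov}, the label-$1$ covers are exactly the covers common to $\mathcal{L}$ and $\mathcal{L}'$. They span all of $S$ by Step 1, and no other common edge exists. So the Hasse diagram of $\mathcal{P}$, whose edges are these covers by Step 2, is the largest common spanning subgraph.
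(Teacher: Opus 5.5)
Your Steps 1 and 2 are correct, and more careful than the paper, which only records that $\hat{0}$ exists. Every $\mathcal{P}$-cover is a label-$1$ cover of $\mathcal{L}$, $\mathcal{P}$ has a minimum, and it is graded by $h$.

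The genuine gap is Step 3, which you leave open. Neither of your strategies can work, because the meet-semilattice claim is false in general. The paper's own proof just asserts that $x_\downarrow \cap y_\downarrow$ has a unique maximal element $x \wedge y$. That is, it silently uses the meet of $\mathcal{L}$, which is exactly the move you rightly flag as unjustified.

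Here is a counterexample. Let $\mathcal{L}$ have:
\begin{itemize}
\item atoms $u, v, p$;
\item an element $x$ covering $u$ and $v$;
\item chains $p \lessdot q \lessdot r$, $u \lessdot a \lessdot a'$ and $v \lessdot b \lessdot b'$;
\item a top $\hat{1}$ covering $x, r, a', b'$.
\end{itemize}
This is a lattice: it has a top, and any two down sets intersect in $\{\hat{0}\}$, $u_\downarrow$, $v_\downarrow$, or one of the two down sets.

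Here $h(\hat{1}_\downarrow)=4$ and $h(x_\downarrow)=2$. So $x \lessdot \hat{1}$ is labeled $2$, while every other cover is labeled $1$. Hence in $\mathcal{P}$ the element $x$ is maximal and incomparable to $\hat{1}$, and $\hat{1}_\downarrow$ is everything except $x$. The common lower bounds of $x$ and $\hat{1}$ are therefore $\{\hat{0},u,v\}$, which has two maximal elements.

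The hypotheses hold with $\mathcal{L}'=\mathcal{L}$, and even with $\mathcal{L}'$ distributive. Indeed, $\mathcal{L}$ refines $\mathbf{2}\times\mathbf{2}\times\mathbf{3}$ with the same heights via $u,v,p,x,a,b,q,a',r,b' \mapsto (1,0,0),(0,1,0),(0,0,1),(1,1,0),(1,0,1),(0,1,1),(0,0,2),(1,1,1),(1,0,2),(0,1,2)$.

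Concretely, your first strategy produces $w=u\vee_{\mathcal{L}} v = x$, and $x \not\leq_{\mathcal{P}} \hat{1}$. Your second strategy fails because $u\neq v$ both have maximal height among the common lower bounds, and both lie on longest chains to $x$ and to $\hat{1}$.

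Step 4 has a second, smaller problem. The claim that ``no other common edge exists'' uses the converse half of the third item of \Cref{lem:cov}. That lemma's proof never establishes this direction, and it is false in general. With $\mathcal{L}'=\mathcal{L}$ above, $x \lessdot \hat{1}$ is a common cover of label $2$. So the Hasse diagram of $\mathcal{P}$ is a common spanning subgraph, but not the largest one.

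What does hold is the following:
\begin{itemize}
\item Label-$1$ covers of $\mathcal{L}$ remain covers in $\mathcal{L}'$, since an intermediate element would force a height gap of at least $2$.
\item Conversely, every common cover has label $1$ when $\mathcal{L}'$ is graded, as for $\mathrm{Dyck}_n$ and $\mathrm{Tam}_n$.
\end{itemize}
So the provable content is your Steps 1--2 plus this weaker form of Step 4. The meet-semilattice assertion needs an extra hypothesis, or must be checked case by case (e.g.\ for $\mathrm{DTam}_n$).
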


\begin{proof}
    It suffices to show that the induced poset $\mathcal{P}$ is a meet-semilattice. The rest of the corollary follows from (iii). Since each chain is in an interval $x_\downarrow = [\hat{0}, x]$ for any $x$, the induced poset $\mathcal{P}$ has a $\hat{0}$. For incomparable elements $x$ and $y$, $x_\downarrow \cap y_\downarrow$ has a unique maximal element $x \wedge y$. So, $\mathcal{P}$ has $\wedge$ and is a meet-semilattice. 
\end{proof}

We call the poset induced by cover relations labeled $1$ the \emph{skeletal poset} $\mathrm{SK}(\mathcal{L})$ of a lattice, and we say that lattices $\mathcal{L}$ and $\mathcal{L}'$ are \emph{anatomic lattices}.

\begin{definition}[Skeletal poset of a lattice]
\label{def:sk}
    Let $\mathcal{L}$ be a lattice, and $h$ the height function of a poset. The \emph{(long) skeletal poset} $\mathrm{SK}(\mathcal{L})$ is the meet-semilattice induced by cover relations $x \lessdot y$ in $\mathcal{L}$ labeled $h(y_\downarrow) - h(x_\downarrow) = 1$.
\end{definition}

\begin{definition}[Anatomic lattices]
\label{def:anatomic}
    Let $\mathcal{L}$ and $\mathcal{L}'$ be lattices such that $\mathcal{L}'$ extends $\mathcal{L}$ and $h(x_\downarrow)_\mathcal{L} = h(x_\downarrow)_{\mathcal{L}'}$. We call $\mathcal{L}$ and $\mathcal{L}'$ \emph{anatomic lattices}. Whenever lattices $\mathcal{L}'$ extend the same lattice $\mathcal{L}$, or lattices $\mathcal{L}$ refine the same lattice $\mathcal{L}'$, we say that the lattices $\mathcal{L}$ and $\mathcal{L}'$ are \emph{anatomically related}. 
\end{definition}

\subsection{Organization of the paper}
\label{subsec:org}

As mentioned, we elaborate on alt-Tamari lattices introduced by Chenevière. Specifically, we show that they are anatomically related. Since every alt-Tamari lattice refines the Dyck lattice $\mathrm{Dyck}_n$ and extends the Tamari lattice $\mathrm{Tam}_n$~\cite{C1}, it suffices to show that the Dyck and Tamari lattices are anatomic lattices, which we prove in \Cref{sec:tam}. We use definitions of the Dyck and Tamari lattices on Dyck paths from~\cite{BB} to do so.

From \Cref{sec:sk_tam} onward, the paper splits into three independent sections. \Cref{sec:sk_tam} and \Cref{sec:alt} focus on introducing new posets and studying them. \Cref{sec:kneser} is a short note where we suggest a setting to use Kneser graphs that, to our knowledge, has not been considered in the literature; we explain this note shortly. In each of these sections, we discuss further directions for research.

In \Cref{sec:sk_tam}, we study the skeletal poset $\mathrm{SK}(\mathrm{Tam}_n)$, which we call the Dyck-Tamari poset $\mathrm{DTam}_n$. Cover relations in $\mathrm{DTam}_n$ are equinumerous with another poset we introduce. This poset has the property that all of its cover relations are those that the Noncrossing partition lattice, or Kreweras lattice, and the Tamari lattice have in common. This observation led us to define the \emph{Kreweras-Tamari poset} $\mathrm{KTam}_n$. Since $\mathrm{KTam}_n$ satisfies properties similar to $\mathrm{DTam}_n$, we call it the wide skeletal poset of the Tamari lattice. In turn, we call $\mathrm{DTam}_n$ the long skeletal poset of the Tamari lattice. We enumerate the intervals in the skeletal posets of the Tamari lattice. Intervals in $\mathrm{DTam}_n$ are enumerated by the number of ternary trees, which also counts intervals in the Kreweras lattice~\cite{E}. Intervals in $\mathrm{KTam}_n$ are enumerated by the number of hex trees, a subset of the ternary trees. We end \Cref{sec:sk_tam} with definitions of the long and wide skeletal posets of $m$-Tamari lattices and suggest further generalizations.   

\begin{figure}[htp]
    \centering
    \includegraphics[width=0.5\linewidth]{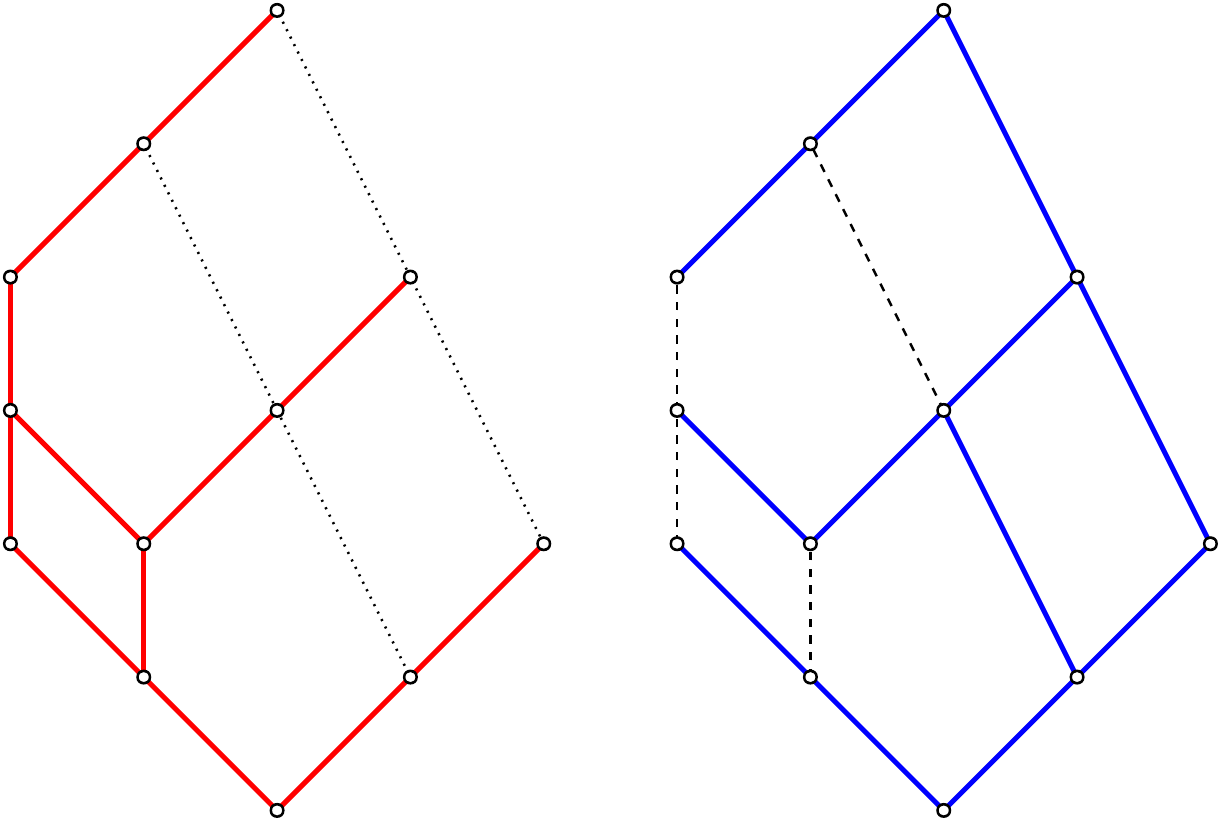}
    \caption{The Hasse diagrams of the \textcolor{red}{long skeletal poset} (left) and the \textcolor{blue}{wide skeletal poset} (right) of the $m$-Tamari lattice (for $m=2$) in \textcolor{red}{red} and \textcolor{blue}{blue} respectively. $\cdots$ and $- -$ are cover relations in the $m$-Tamari lattice.}
    \label{fig:sk_2_tam}
\end{figure}

In \Cref{sec:alt}, we introduce a generalization of the alt-Tamari lattices. Besides being anatomically related, alt-Tamari lattices have the same number of linear intervals, i.e., intervals that are chains. We observe that this property holds for other families of lattices, which we call \emph{altitude lattices}. Altitude lattices within the same family refine a well-chosen distributive lattice. In the case of the alt-Tamari lattices, the well-chosen distributive lattice is the Dyck lattice; even in this case, to the best of our knowledge, some altitude lattices are new. To introduce altitude lattices, we first make some simple order-theoretic observations about refining intervals of certain posets. Then, we consider embeddings of distributive lattices into the grid. These embeddings show us how to refine a distributive lattice locally, i.e., refining an interval, to get polygonal lattices that are embeddable into the grid, and have the same number of linear intervals. We recommend that the reader see \Cref{exm:alt} or the following figures in \Cref{sec:alt}: \Cref{fig:samples}, \Cref{fig:dyck_to_tam} and \Cref{fig:camb}.

\begin{example}[A family of altitude lattices]
\label{exm:alt}
    The figure below is an example of altitude lattices within the same family that are embedded into the grid (see the dotted lines), followed by details about their linear intervals.
    \begin{figure}[htp]
        \centering
        \includegraphics[width=0.8\linewidth]{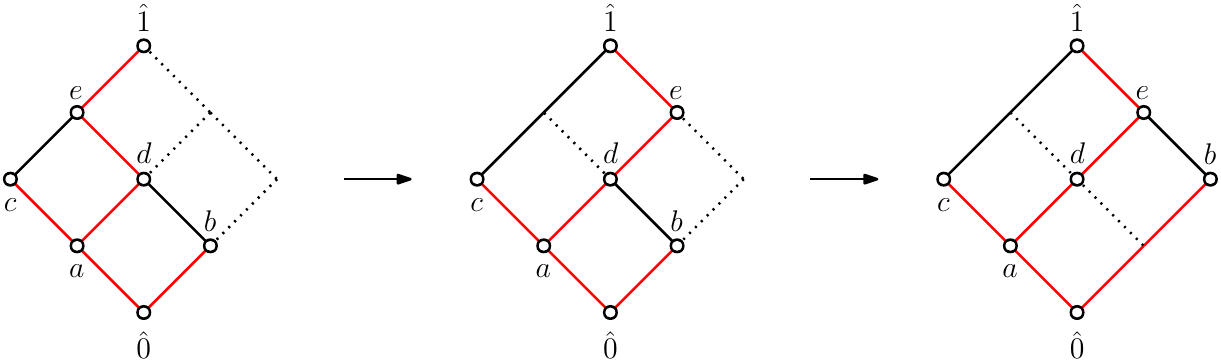}
        \caption{A family of altitude lattices. The lattice on the left side of an arrow is refine by the lattice on the right side of an arrow. The \textcolor{red}{cover relations} in \textcolor{red}{red} induce the skeletal poset of the rightmost lattice.}
        \label{fig:alt}
    \end{figure}
    \begin{itemize}
        \item There are $7$ elements or one-element chains.
        \item There are $8$ cover relations or two-element chains.
        \item There are $5$ intervals that are three-element or four-element chains, they are
        \begin{itemize}
            \item for the leftmost lattice: $[\hat{0},c], [b,e], [b,\hat{1}], [c, \hat{1}], [d, \hat{1}]$,
            \item for the middle lattice: $[\hat{0},c], [a,e], [b,e], [b,\hat{1}], [d,\hat{1}]$,
            \item for the rightmost lattice: $[\hat{0},c], [\hat{0},d], [a,e], [b, \hat{1}],[d, \hat{1}]$.
        \end{itemize}
    \end{itemize}
\end{example}

In \Cref{sec:kneser}, we use the height function $h$ to define \emph{Kneser graphs}, first studied by Kneser~\cite{K} then famously by Lovász~\cite{Lov} (notably for his use of the  Borsuk-Ulam theorem). A Kneser graph has as vertices the $k$-element subsets of $[n] := \{1,2, \dots, n\}$, and two $k$-element subsets $x$ and $y$ are adjacent when their intersection is empty. Observe that the subsets of $[n]$ ordered by inclusion define the Boolean lattice, and the empty set is its minimal element. So, a Kneser graph has as vertices the elements $x$ where $h(x_\downarrow)=k$, and vertices $x$ and $y$ are adjacent when $x_\downarrow \cap y_\downarrow = \varnothing = \hat{0}$. This definition in the poset setting using $h$ replaces the Boolean lattice with any poset with $\hat{0}$. Then, by considering all elements $x \neq \hat{0}$, we obtain the \emph{zero-divisor graph}, first introduced by Beck~\cite{B} and then generalized to the poset setting by Halaš and Jukl~\cite{HJ}.

By definition, Kneser graphs are subgraphs of the zero-divisor graph. Similarly, the zero-divisor graph is a subgraph of the \emph{incomparability graph} where vertices $x$ and $y$ are adjacent if and only if $x$ and $y$ are incomparable. The incomparability graph, dually, the comparability graph, determine, up to duality ($x \leq y$ to $x \geq y$), almost all posets~\cite{KR, Möh}. For posets that are uniquely determined by their zero-divisor graphs, these graphs can be thought of as sparser incomparability graphs. Posets with the same zero-divisor graph may not have the same Kneser subgraphs (and vice versa, which we will explain then). For example, the number of elements $x$ such that $h(x_\downarrow) = k$ may differ between these posets. With this observation, we consider how the zero-divisor graph, more generally the incomparability graph, and its Kneser subgraphs interact in a reconstruction setting.

As a final remark regarding the organization of this paper, this is a preliminary version of our work. The purpose of this paper is to give some initial details to our ideas, which, to the best of our knowledge, have not been explored in the literature. We will expand upon \Cref{sec:sk_tam} and \Cref{sec:alt} in~\cite{L1, L2}.

\section{The Dyck and Tamari lattices}
\label{sec:tam}

The Dyck and Tamari lattices are defined on sets whose number of elements is a Catalan number. The Catalan numbers $\mathrm{Cat}_n$ have a rich history (see~\cite{S}) and a simple recursion; explicitly, $\mathrm{Cat}_0 = 1, \mathrm{Cat}_n = \sum_{i=0}^{n-1}\mathrm{Cat}_i\mathrm{Cat}_{n-i-1}$ or $\mathrm{Cat}(x) = 1 + x\mathrm{Cat}(x)^2$. It became natural for the Dyck and Tamari lattices to be some of the most well-studied lattices in Combinatorics. For the familiar reader, the Dyck lattice is better known as the Stanley lattice or the principal order ideal of the staircase shape in the Young lattice. We recommend the surveys~\cite{MPS, Pons}.

\subsection{Bracket vectors and Dyck paths}
\label{subsec:bracket_dyck}

For visual clarity (e.g., see \Cref{fig:cat_bij}), we use $\emph{Dyck paths}$ to define the Dyck and Tamari lattices and then $\emph{bracket vectors}$ (first introduced by Huang and Tamari~\cite{HT}) to study the height of these lattices. These objects are defined below.

\begin{definition}[Bracket vectors, Definition $9.1$ in~\cite{BW}]
\label{def:bracket}
    A vector $v := (v_1,v_2,...,v_n)$ is a \emph{bracket vector} of size $n \in \mathbb{Z}_{\geq 0}$ if $v_i \in \mathbb{N}$ for $1 \leq i \leq n$ such that
    \begin{itemize}
        \item[(i)] $0 \leq v_i \leq n-i$, and
        \item[(ii)] $v_{i+j} \leq v_i - j$, for $0 \leq j \leq v_i$.
    \end{itemize}
\end{definition}

\begin{definition}[Dyck paths, folklore]
\label{def:dyck_path}
    A word $w := w_1w_2 \cdots w_{2n}$ is a \emph{Dyck path} of size $n \in \mathbb{Z}_{\geq 0}$ if the letters $w_i \in \{u,d\}$, for $1 \leq i \leq 2n$, such that 
    \begin{itemize}
        \item[(i)] the number of letters $w_i = u$ is equal to the number of letters $w_i = d$, and
        \item[(ii)] at any index $i$, the number of letters $w_j = u$ is at least the number of $w_j = d$ for $j \leq i$. 
    \end{itemize} 
\end{definition}

\begin{example}[Bracket vectors and Dyck paths for $n=3$ and nonexamples]
    Bracket vectors for $n=3$ are $(0,0,0), (1,0,0), (0,1,0), (2,0,0), (2,1,0)$, and the Dyck paths for $n=3$ are $u_1d_2u_3d_4u_5d_6$, $u_1u_2d_3d_4u_5d_6$, $u_1d_2u_3u_4d_5d_6$, $u_1u_2d_3u_4d_5d_6$, and $u_1u_2u_3d_4d_5d_6$. $(1,0,1)$ is not a bracket vector since $v_3 = 1 > 3-3=0$ and does not satisfy (i) of \Cref{def:bracket}. $(1,1,0)$ is not a bracket vector since $v_2 = 1 > v_1 - 1 = 0$ and does not satisfy (ii) of \Cref{def:bracket}. $u_1u_2u_3u_4d_5d_6$ is not a Dyck path since there are $4$ $u$ and $2$ $d$ and it does not satisfy (i) of \Cref{def:dyck_path}. $u_1d_2d_3u_4u_5d_6$ is not a Dyck path since there are $1$ $u$ and $2$ $d$ at the index $3$ which does not satisfy (ii) of \Cref{def:dyck_path}.
\end{example}

It remains to show that bracket vectors and Dyck paths are indeed counted by Catalan numbers. We give a bjiection between these sets afterwards.

\begin{proposition}[Counting bracket vectors and Dyck paths]
\label{prop:cat}
    The sets of bracket vectors and Dyck paths are each counted by Catalan numbers.
\end{proposition}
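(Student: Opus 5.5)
The approach is to show that both sets satisfy the Catalan recursion $\mathrm{Cat}_0 = 1$, $\mathrm{Cat}_n = \sum_{i=0}^{n-1}\mathrm{Cat}_i\mathrm{Cat}_{n-i-1}$, by a first-return decomposition for Dyck paths and a first-entry decomposition for bracket vectors. In both cases the empty object of size $0$ gives the base case.

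\textbf{Dyck paths.} Let $w$ be a Dyck path of size $n \geq 1$. By (ii) of \Cref{def:dyck_path}, $w_1 = u$. Let $2k$ be the smallest index $j \geq 1$ at which the numbers of $u$ and $d$ among $w_1\cdots w_j$ agree; such an index exists by (i). Then $w_{2k} = d$, and we can write $w = u\,w'\,d\,w''$, where $w'$ has length $2(k-1)$ and $w''$ has length $2(n-k)$. Minimality of $2k$ means every prefix of $w'$ has at least as many $u$ as $d$, so $w'$ is a Dyck path of size $k-1$. Since the prefix $w_1\cdots w_{2k}$ is balanced, $w''$ inherits (i) and (ii), so it is a Dyck path of size $n-k$. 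Conversely, for any Dyck paths $w'$ and $w''$ of sizes $i$ and $n-1-i$, the word $u\,w'\,d\,w''$ is a Dyck path whose first return is at index $2i+2$. This gives a bijection with $\bigsqcup_{i}\{\text{paths of size } i\}\times\{\text{paths of size } n-1-i\}$, and hence the recursion.

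\textbf{Bracket vectors.} Let $v$ be a bracket vector of size $n\geq 1$, and set $k := v_1$, so $0\le k\le n-1$. Condition (ii) with $i=1$ gives $v_{1+j}\le k-j$ for $1\le j\le k$. Hence the block $(v_2,\dots,v_{k+1})$ satisfies $v_{1+j}\le k-j$, which is exactly condition (i) for a vector of size $k$ with indices shifted by one. Condition (ii) restricted to this block stays inside the block, because $v_{1+j}\le k-j$ forces $1+j+v_{1+j}\le k+1$. So the block is a bracket vector of size $k$.

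The tail $(v_{k+2},\dots,v_n)$ has size $n-1-k$. Condition (i) for $v$ at index $i$ reads $v_i\le n-i$, which is exactly condition (i) for the shifted tail. Condition (ii) for indices in the tail refers only to tail indices, since $i + v_i \le n$. So the tail is a bracket vector of size $n-1-k$.

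For the converse, given a bracket vector $a$ of size $k$ and a bracket vector $b$ of size $n-1-k$, the concatenation $(k, a, b)$ is a bracket vector of size $n$. Condition (i) holds index by index. Condition (ii) at $i=1$ follows from (i) for $a$. Condition (ii) at all other indices follows from (ii) for $a$ and $b$, because their windows $[i, i+v_i]$ do not cross block boundaries. These two maps are mutually inverse, which gives the same recursion, now summed over $k = i$.

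The main obstacle is this bracket-vector step. One has to check carefully that the windows $[i, i+v_i]$ nest inside the correct block, in both directions of the bijection, so that the decomposition is really well defined. The Dyck path case is standard. Since both sequences satisfy the same recursion with the same initial value, both are counted by $\mathrm{Cat}_n$.
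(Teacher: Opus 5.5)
Your proposal is correct and follows essentially the same route as the paper. The paper also splits a bracket vector at $v_1 = k$ into $(v_2,\dots,v_{k+1})$ and $(v_{k+2},\dots,v_n)$, and decomposes a Dyck path at its first return as $u\,w'\,d\,w''$, to get the Catalan recursion. Your version is more complete: you check that each window $[i, i+v_i]$ stays inside its block and you verify the converse (concatenation) direction, both of which the paper leaves implicit, and your recursion keeps the factor $x$ that is missing from the paper's $\mathrm{Cat}(x) = 1 + \mathrm{Cat}(x)^2$.
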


\begin{proof}
    For $v$ a bracket vector, fix $v_1 = k$ for $0 \leq k \leq n-1$ by \Cref{def:bracket}. Let $v'$ and $v''$ be such that $v' = (v_2, v_3, \dots, v_{k+1})$ and $v'' = (v_{k+2}, v_{k+2}, \dots, v_{n})$. Note that the empty vector is vacuously a bracket vector. Since $v$ is a bracket vector, $v'$ and $v''$ are bracket vectors by \Cref{def:bracket}. We have $\mathrm{Cat}(x) = 1 + \mathrm{Cat}(x)^2$ as desired.

    For $w$, a Dyck path, fix the Dyck path with minimal size that starts with $w_1 = u$. Such a path ends with $w_{2k} = d$ for $1 \leq k \leq n$ by \Cref{def:dyck_path}. Let $w'$ and $w''$ be words such that $w' = w_2w_3 \cdots w_{2k-1}$ and $w'' = w_{2k+1}w_{2k+2} \cdots w_{2n}$. Note that the empty word is vacuously a Dyck path. Since $w$ is a Dyck path, $w'$ and $w''$ are Dyck paths by \Cref{def:dyck_path}. We have $\mathrm{Cat}(x) = 1 + \mathrm{Cat}(x)^2$ as desired.
\end{proof}

\begin{corollary}[A bijection between bracket vectors and Dyck paths]
\label{cor:bij}
    There exists a bijection between bracket vectors and Dyck paths.
\end{corollary}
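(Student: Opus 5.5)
The plan is to build the bijection recursively from the two decompositions used in the proof of \Cref{prop:cat}, rather than appealing only to equal cardinalities. Both decompositions realize the same recursion $\mathrm{Cat}(x) = 1 + x\,\mathrm{Cat}(x)^2$. A bracket vector $v$ of size $n \geq 1$ splits uniquely as $(k, v', v'')$, where $k = v_1$, $v'$ is a bracket vector of size $k$ and $v''$ is a bracket vector of size $n-k-1$. A Dyck path $w$ of size $n \geq 1$ splits uniquely as $u\,w'\,d\,w''$, where $w'$ has size $k$ and $w''$ has size $n-k-1$; here $2k+2$ is the first return of $w$ to height zero.

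Define $\phi$ on bracket vectors by induction on $n$. Set $\phi(\varnothing) = \varnothing$ (the empty word). For $n \geq 1$, set
\[
\phi(v) = u\,\phi(v')\,d\,\phi(v'').
\]
By induction $\phi(v')$ and $\phi(v'')$ are Dyck paths of sizes $k$ and $n-k-1$, so $\phi(v)$ is a Dyck path of size $n$.

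The inverse $\psi$ is defined the same way. Given $w = u\,w'\,d\,w''$ with $w'$ of size $k$, set $\psi(w) = (k, \psi(w'), \psi(w''))$, meaning the concatenation of the entry $k$ with the two vectors.

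The checks, in order, are as follows.

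\begin{enumerate}
\item \emph{The decompositions are unique and well defined.} For Dyck paths this is the first-return decomposition. For bracket vectors one must confirm that the vector $(k, v', v'')$ built from arbitrary bracket vectors $v'$ of size $k$ and $v''$ of size $n-k-1$ satisfies \Cref{def:bracket}. Condition (i) holds since $k \leq n-1$. For condition (ii), the entries of $v'$ are shifted by one position, and the bound $v'_i \leq k-i$ is exactly $v_{1+i} \leq v_1 - i$. Entries of $v'$ respect their own constraints within the block, and entries of $v''$ lie beyond index $k+1 = 1 + v_1$, so they are unconstrained by $v_1$. Conversely, splitting an arbitrary bracket vector at $v_1$ yields bracket vectors, again by (ii).
\item \emph{$\psi \circ \phi$ and $\phi \circ \psi$ are identities.} This follows by a routine induction on $n$, since each map inverts the other's decomposition step.
\end{enumerate}

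The main obstacle is the well-definedness of the bracket-vector decomposition in step 1, because \Cref{def:bracket}(ii) constrains $v_{i+j}$ only for $j \leq v_i$. One must verify that constraints coming from entries inside $v'$ never reach past position $k+1$. This holds because $v'_i \leq k-i$ confines each such constraint to the block.

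Alternatively, once \Cref{prop:cat} gives $|\{\text{bracket vectors of size } n\}| = \mathrm{Cat}_n = |\{\text{Dyck paths of size } n\}|$, a bijection exists abstractly, which already proves the corollary as stated. I would nonetheless present the explicit recursive bijection above, since it is what is needed later to transfer heights between the two models.
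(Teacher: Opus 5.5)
Your proof is correct, and it produces the same bijection as the paper, by a different route.

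The paper defines the map directly. For $i=1,\dots,n$ it places a $u$ at the smallest unused index $j$ and its matching $d$ at $j+2v_i+1$. In the reverse direction it reads $v_i$ as the number of matched $u$--$d$ pairs nested strictly inside the $i$-th pair, with pairs ordered by the positions of their $u$'s. Unwinding your recursion $\phi(v)=u\,\phi(v')\,d\,\phi(v'')$ gives exactly this map: by induction, the $i$-th pair of $\phi(v)$ encloses $v_i$ pairs.

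The recursive route makes the verification cheap. Well-definedness and mutual inverseness follow from the uniqueness of the two first-return decompositions. The only place \Cref{def:bracket}(ii) enters is the block check you isolate. By contrast, the paper's direct placement tacitly needs (ii) to guarantee that index $j+2v_i+1$ is still free. For the non-bracket vector $(1,1,0)$, the third step would reuse index $4$. The paper also does not explicitly check that its two maps invert each other.

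The direct description, for its part, states the nested-pair meaning of $v_i$ outright. That is the form actually used in \Cref{lem:cover_bracket} and \Cref{thm:dtam}. Since you want the explicit bijection in order to transfer heights later, add the one-line induction showing that, under $\phi$, $v_i$ equals the number of pairs inside the $i$-th pair.
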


\begin{figure}[htp]
    \centering
    \includegraphics[width=0.8\linewidth]{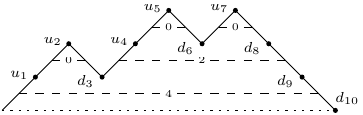}
    \caption{A summary of the bijection between bracket vectors and Dyck paths below. For each paired $u$ and $d$, there is an integer counting pairs of letters $u$ and $d$ between such a pair. The bracket vector here is $(4,0,2,0,0)$ and the Dyck path here is $u_1u_2d_3u_4u_5d_6u_7d_8d_9d_{10}$.}
    \label{fig:cat_bij}
\end{figure}

\begin{proof}
    For $v$ a bracket vector, we use $v_i$ for $1 \leq i \leq n$ to write a Dyck path $w$ as follows:
    \begin{itemize}
        \item let $w := w_1w_2 \cdots w_{2n}$ be a word, and $I := \{1,2, \dots ,2n\}$ be the indexing set,
        \item let $w_1=u$ and $w_{1+2v_1+1} =d$, then remove the indices $1$ and $1+2v_1+1$ from $I$,
        \item for $v_{i \geq 2}$, let $w_j = u$ be with index $j$ such that $j := min(I)$ and $w_{j+2v_i+1} = d$, then \newline remove indices $j$ and $j+2v_i+1$ from $I$,
        \item repeat until $I$ is empty.
    \end{itemize}
    By construction, there are $n$ many $w_k = u$ and $n$ many $w_k = d$ for $1 \leq k \leq 2n$. The word $w$ satisfies (i) of \Cref{def:dyck_path}. For any $w_j=u$ and $w_{j+2v_i+1} = d$ as described above (including $j=1$), the index of $d$ is always greater than the index of $u$. It follows that at any index of $w$, the number of $w_k = u$ is at least the number of $w_k = d$. So, $w$ satisfies (ii) of \Cref{def:dyck_path}, and $w$ is a Dyck path.

    For $w$, a Dyck path, observe that for every $w_{k'} = u$ for $1 \leq k' \leq 2n-1$, there exists $w_{k'+2k''+1} = d$ for $0 \leq k'' \leq n-1$ such that $w_{k'+1}w_{k'+2} \cdots w_{k'+2k''}$ is a Dyck path, which may be the empty path. Observe that $0 \leq k'' \leq n-1$ is (i) of \Cref{def:bracket}. We write $v_i = k''$ for $1 \leq i \leq n$, where $v := (v_1, v_2, \dots, v_n)$. The indices $i$ represent the $i$-th paired letters $w_{k'} = u$ and $w_{k'+2k''+1} = d$. So, $v_i$ is read as the value $k''$ of the $i$-th paired letters. For the $(i+j)$-th paired letters such that $0 \leq j \leq v_i$, the indices of the $(i+j)$-th paired letters are between $k'$ and $k'+2k''+1$. Thus, $v_{i+j} \leq v_i-j$, and $v$ satisfies (ii) of \Cref{def:bracket}, and $v$ is a bracket vector. We have a bijection between bracket vectors and Dyck paths as claimed.
\end{proof}

\subsection{The Dyck and Tamari lattices are anatomic lattices.}
\label{subsec:d_and_t_anatomic}

We turn our attention to the Dyck and Tamari lattices. We define these lattices as the reflexive transitive closures of cover relations defined on Dyck paths. By the end of this section, we show that they are anatomic lattices.

\begin{definition}[Dyck lattice or Stanley lattice in~\cite{BB}]
\label{def:dyck}
    The \emph{Dyck lattice} $\mathrm{Dyck}_n$ is the reflexive transitive closure of cover relations $w = AduB \lessdot w' = AudB$, where $w$ and $w'$ are Dyck paths of size $n$, and $A$ and $B$ are nonempty. 
\end{definition}

\begin{figure}[htp]
    \centering
    \includegraphics[width=0.5\linewidth]{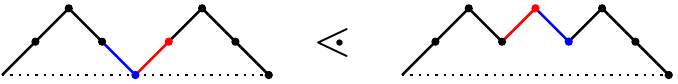}
    \caption{$u_1u_2d_3\textcolor{blue}{d_4}\textcolor{red}{u_5}u_6d_7d_8 \lessdot u_1u_2d_3\textcolor{red}{u_4}\textcolor{blue}{d_5}u_6d_7d_8$ in $\mathrm{Dyck}_4$.}
    \label{fig:cover_dyck}
\end{figure}

\begin{definition}[Tamari lattice~\cite{BB}]
\label{def:tam}
    The \emph{Tamari lattice} $\mathrm{Tam}_n$ is the reflexive transitive closure of cover relations $w = AdDB \lessdot w' = ADdB$, where $w$ and $w'$ are Dyck paths of size $n$, $A$ is nonempty, and $D = u \cdots d$ is the Dyck path of minimal size.
\end{definition}

\begin{figure}[htp]
    \centering
    \includegraphics[width=0.5\linewidth]{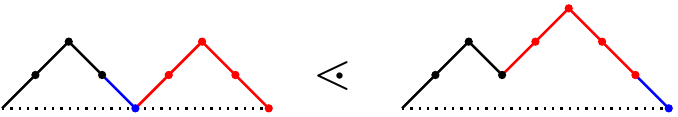}
    \caption{$u_1u_2d_3\textcolor{blue}{d_4}\textcolor{red}{u_5u_6d_7d_8} \lessdot u_1u_2d_3\textcolor{red}{u_4u_5d_6d_7}\textcolor{blue}{d_8}$ in $\mathrm{Tam}_4$.}
    \label{fig:cover_tam}
\end{figure}

By \Cref{def:dyck} and \Cref{def:tam} with \Cref{fig:cover_dyck} and \Cref{fig:cover_tam} as examples, it can be seen that the Dyck lattice is an extension of the Tamari lattice. We give a short proof of this claim below. 

\begin{proposition}[Dyck lattice extends Tamari lattice]
\label{prop:d_extends_t}
    The Dyck lattice $\mathrm{Dyck}_n$ is an extension of the Tamari lattice $\mathrm{Tam}_n$.
\end{proposition}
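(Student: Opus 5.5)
Both lattices are defined on the same set of Dyck paths of size $n$, as reflexive transitive closures of their cover relations. So, by the definition of refinement, it suffices to show that every Tamari cover relation $w = AdDB \lessdot w' = ADdB$ of \Cref{def:tam} is a relation $w \leq w'$ in $\mathrm{Dyck}_n$. Once every generating relation of $\mathrm{Tam}_n$ holds in $\mathrm{Dyck}_n$, their transitive closure does as well, since $\mathrm{Dyck}_n$ is transitive.

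The plan is to realize the Tamari move as a finite sequence of Dyck covers $AduB \lessdot AudB$. Write $D = u D_1 d$, where $D$ is the primitive Dyck path of \Cref{def:tam}, of size $k \geq 1$ and length $2k$. The Tamari move carries the letter $d$ from just before $D$ to just after $D$. I would do this by moving $d$ one letter to the right at a time through the $2k$ letters of $D$. Each step has one of two forms:
\begin{itemize}
    \item a swap $du \to ud$, which is exactly a Dyck cover and goes up in $\mathrm{Dyck}_n$;
    \item a swap $dd \to dd$, which is trivial and leaves the word unchanged.
\end{itemize}
So the sequence of words is weakly increasing in $\mathrm{Dyck}_n$ and ends at $ADdB = w'$, giving $w \leq_{\mathrm{Dyck}_n} w'$.

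Two things must be checked at each step.

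\textbf{Intermediate words are Dyck paths.} Moving a $d$ later never lowers any prefix height. Concretely, after $j$ steps the prefix heights are those of $w$ raised by $1$ on the positions the $d$ has passed, and unchanged elsewhere. So condition (ii) of \Cref{def:dyck_path} is preserved, and (i) is obvious.

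\textbf{Nonemptiness hypotheses.} The Dyck cover in \Cref{def:dyck} requires $A$ and $B$ to be nonempty in $AduB$. The prefix is nonempty because $A$ is nonempty and we only enlarge it. The suffix after a swapped $du$ always contains at least the final $d$ of $D$, except possibly at the very last letter of $D$. That last letter is a $d$, so that step is a trivial $dd$ swap and needs no cover. This matches the example in \Cref{fig:cover_dyck}.

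I expect this bookkeeping of nonemptiness to be the main obstacle. It is also somewhat notational, since the paper's nonemptiness condition seems intended only to keep the paths valid.

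If needed, I would instead argue via heights. $\mathrm{Dyck}_n$ is exactly the componentwise order on prefix height sequences, and in $w'$ every prefix height is at least the corresponding one in $w$. Only the positions spanning $D$ increase, each by exactly $1$. This gives $w \leq w'$ in $\mathrm{Dyck}_n$ directly.
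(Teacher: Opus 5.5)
Your argument is correct, but it takes a different route from the paper.

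\textbf{The paper's route.} The paper first asserts a global description of $\mathrm{Dyck}_n$: $w \leq w'$ if and only if $w$ is \emph{weakly below} $w'$, meaning that at every index $w$ has at most as many letters $u$ as $w'$. It then simply observes that $AdDB$ is weakly below $ADdB$. Your fallback paragraph is essentially this proof.

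\textbf{Your route.} Your main argument instead factors each Tamari generator $AdDB \lessdot ADdB$ explicitly into $du \to ud$ swaps. It uses nothing beyond the two generating sets and transitivity. This buys two things.
\begin{itemize}
\item It is self-contained. The cover description only gives the easy direction of the paper's characterization ($w \leq_{\mathrm{Dyck}_n} w'$ implies weakly below). The paper's conclusion needs the converse, weakly below implies $\leq_{\mathrm{Dyck}_n}$, which it does not prove; proving it takes a short induction that flips a suitable valley $du$. Your chain makes that converse unnecessary, whereas your fallback would inherit the same unproved step.
\item Your chain has exactly $k$ nontrivial steps, where $k$ is the size of $D$. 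Each swap raises the area by one, so the chain shows directly that the Tamari generator spans $k$ Dyck covers. It is itself a Dyck cover precisely when $D = ud$, which is the key observation in \Cref{thm:dtam}. It also matches the increment $v_{(i+v_i+1)}+1$ in \Cref{lem:cover_bracket}.
\end{itemize}

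\textbf{Two small corrections.}
\begin{itemize}
\item The prefix counts are not raised on every position the moving $d$ passes, nor on every position spanning $D$. They gain one $u$ only at positions where the $d$ has passed a $u$ of $D$, and are unchanged where it passed a $d$. For example, $udud \lessdot uudd$ has prefix $u$-counts $1,1,2,2$ versus $1,2,2,2$. Only weak increase is needed, so nothing breaks.
\item The nonemptiness bookkeeping you flag as the main obstacle is vacuous. A factor $du$ in a Dyck path can never occupy the first letter, which is a $u$, or the last letter, which is a $d$. So the $A$ and $B$ of every swap are automatically nonempty.
\end{itemize}
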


\begin{proof}
    We need to show that $w \leq w'$ in $\mathrm{Tam}_n$ implies $w \leq w'$ in $\mathrm{Dyck}_n$, where $w$ and $w'$ are Dyck paths of size $n$. Consider $w = AduB \lessdot w' = AudB$ in $\mathrm{Dyck}_n$. The number of letters $u$ at the index of $d$ as written in $AduB$ is one less than the number of letters $u$ as written in $AudB$ at the same index. It follows that $w \leq w'$ in $\mathrm{Dyck}_n$ if and only if the number of letters $u$ in $w$ at any index $i$ for $1 \leq i \leq 2n$ is at most the number of letters $u$ in $w'$ at the same index. We say that $w \leq w'$ in $\mathrm{Dyck}_n$ if and only if $w$ is \emph{weakly below} $w'$. Consider $w = AdDB \lessdot w' = ADdB$ in $\mathrm{Tam}_n$. Observe that $w$ is weakly below $w'$. By transitivity, $w \leq w'$ in $\mathrm{Tam}_n$ implies $w \leq w'$ in $\mathrm{Dyck}_n$. We conclude that the Dyck lattice $\mathrm{Dyck}_n$ is an extenstion of the Tamari lattice $\mathrm{Tam}_n$.
\end{proof} 

To show that the Dyck lattice $\mathrm{Dyck}_n$ and the Tamari lattice $\mathrm{Tam}_n$ are anatomic lattices, we need to prove that $h(x_\downarrow)_{\mathrm{Tam}_n} = h(x_\downarrow)_{\mathrm{Dyck}_n}$. We do using Dyck paths. With bracket vectors, we compute $h(x_\downarrow)$ for any $x$ in $\mathrm{Dyck}_n$ and $\mathrm{Tam}_n$. First, we show how bracket vectors are used to determine when Dyck paths $w$ and $w'$ are comparable in the Tamari lattice. 

\begin{lemma}[Cover relations in terms of bracket vectors]
\label{lem:cover_bracket}
    For bracket vectors $v, v'$, the pair $v = (v_1, v_2, \dots, v_i, \dots, v_n), v' = (v_1, v_2, \dots, v_i' = v_i + v_{(i+v_i+1)} + 1, \dots, v_n)$ for $1 \leq i \leq n$ is a cover relation $v \lessdot v'$ in the Tamari lattice $\mathrm{Tam}_n$.
\end{lemma}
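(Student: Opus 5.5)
The plan is to transport the Tamari cover relation of \Cref{def:tam} through the bijection of \Cref{cor:bij} and check that it has exactly the stated effect on bracket vectors. Recall how that bijection reads coordinates: number the matched pairs $(u,d)$ of a Dyck path $w$ in the order of their letters $u$. Then $v_i$ is the number of pairs nested strictly inside the $i$-th pair. Because nested pairs have their $u$ after the $u$ of the enclosing pair, the pairs nested inside pair $i$ are exactly pairs $i+1, \dots, i+v_i$. So the letter $d$ closing pair $i$ sits immediately after the subpath formed by those $v_i$ pairs.

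First, I locate the $d$ and the $D$ of the cover $w = AdDB \lessdot w' = ADdB$. Let the moved $d$ close pair $i$. The next letter of $w$ starts $D$, so it is a $u$. By the numbering above, this $u$ opens pair $i+v_i+1$. Since $D$ is a Dyck path of minimal size (primitive), it is exactly pair $i+v_i+1$ together with everything nested inside it. Hence $D$ contains $v_{i+v_i+1}+1$ pairs. Conversely, I need that the hypothesis ``$v'$ is a bracket vector'' forces the letter after the closing $d$ of pair $i$ to be a $u$. I will argue by contradiction. If that letter were a $d$, it would close some pair $j<i$ enclosing pair $i$ whose nested pairs end exactly at pair $i+v_i$, so $v_j - (i-j) = v_i$. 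Then $v_i' > v_i = v_j-(i-j)$ would violate (ii) of \Cref{def:bracket} for $v'$. If pair $i$ were the last pair, (i) of \Cref{def:bracket} would fail instead. So under the hypothesis, the move $AdDB \to ADdB$ is always available at pair $i$.

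Next, I compare the bracket vectors of $w$ and $w'$. Moving $d$ past $D$ does not reorder any letter $u$, so the pair numbering is unchanged. Pair $i$ now additionally encloses the $v_{i+v_i+1}+1$ pairs of $D$, which gives $v_i' = v_i + v_{i+v_i+1}+1$. For every other pair $k$, I check that the set of pairs nested inside it is unchanged. Pairs inside $A$-prefix positions but before pair $i$ that enclose pair $i$ must also enclose $D$ already, since their closing $d$ comes after the closing $d$ of pair $i$, and the following letter is $u$. Pairs inside $D$ or inside pair $i$ keep the same nested pairs. Pairs entirely in $A$ or $B$ are untouched. So $v_k'=v_k$ for $k\neq i$. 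Combining this with \Cref{def:tam}, $v \lessdot v'$ in $\mathrm{Tam}_n$.

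The main obstacle I expect is the bookkeeping in the second step. It must be precise that $D$ begins exactly at pair $i+v_i+1$, and the bracket-vector hypothesis on $v'$ must exclude the degenerate case where the next letter is $d$. I would handle this with the explicit identification of nested pairs as a consecutive block of indices, which is essentially (ii) of \Cref{def:bracket} read through the proof of \Cref{cor:bij}.
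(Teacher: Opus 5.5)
Your proposal is correct and follows the same route as the paper: you transport the cover $AdDB \lessdot ADdB$ through the bijection of \Cref{cor:bij}, identify $D$ with the $(i+v_i+1)$-th pair together with the pairs nested inside it, and read off $v_i' = v_i + v_{(i+v_i+1)} + 1$ with all other coordinates unchanged. You also show, via (i) and (ii) of \Cref{def:bracket}, that the hypothesis that $v'$ is a bracket vector forces the letter after the $i$-th closing $d$ to be a $u$; this is a useful addition, because the paper's proof only goes from the Dyck-path cover to the bracket vectors and leaves this converse direction implicit.
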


\begin{proof}
    Recall the bijection from \Cref{cor:bij} that for any bracket vector $v = (v_1, v_2, \dots, v_n)$, where $v_k$ for $1 \leq k \leq n$ is the number of pairs of letters $u$ and $d$ between the $k$-th paired letters $u$ and $d$. Observe that a $k$-th paired letters $u$ and $d$ define a Dyck path that starts with $u$ and ends with $d$. Let $v$ be the bracket vector of the Dyck path $AdDB$ in $AdDB \lessdot ADdB$ in $\mathrm{Tam}_n$. The letter $d$ of $AdDB$ is paired with a letter $u$, and we call this pair the $i$-th pair. Let $v_i$ correspond to the $i$-th pair. Since $D$ is of minimal size, the index $k$ of $v_k$ corresponding to $D$ is $i+v_i+1$. Then, the bracket vector $v'$ of $ADdB$ is $(v_1, v_2, \dots, v_i + v_{(i+v_i+1)} + 1, \dots, v_n)$. So, $v \lessdot v'$ in $\mathrm{Tam}_n$. 
\end{proof}

\begin{corollary}[Tamari lattice defined on bracket vectors, Definition $9.1$ in~\cite{BW}]
\label{cor:bracket_tam}
    For bracket vectors $v$ and $v'$ of size $n$ corresponding to Dyck paths $w$ and $w'$ of size $n$, respectively, $w \leq w'$ in $\mathrm{Tam}_n$ if and only if $v \leq v'$, where $v_i \leq v_i'$ for $1 \leq i \leq n$. Equivalently, bracket vectors $v$ and $v'$ ordered by $\leq$ where $v \leq v'$ for $v_i \leq v_i'$ define the Tamari lattice $\mathrm{Tam}_n$.
\end{corollary}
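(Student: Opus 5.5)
Both directions go through \Cref{lem:cover_bracket}, which says that every Tamari cover relation $v \lessdot v'$ replaces one coordinate $v_i$ by the strictly larger value $v_i + v_{i+v_i+1} + 1$ and leaves all other coordinates fixed.

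\textbf{Forward direction.} Suppose $w \leq w'$ in $\mathrm{Tam}_n$. Then $w'$ is reached from $w$ by a finite chain of cover relations. Each step, read on bracket vectors, weakly increases every coordinate. Transitivity of the componentwise order then gives $v \leq v'$.

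\textbf{Converse.} Suppose $v \leq v'$ componentwise with $v \neq v'$. I would find a Tamari cover $v \lessdot v''$ with $v'' \leq v'$ and then induct on $\sum_i (v'_i - v_i)$, which strictly decreases at each step. For the choice of cover:
\begin{itemize}
\item Take $i$ to be the \emph{largest} index with $v_i < v'_i$, so that $v_k = v'_k$ for all $k > i$.
\item Set $j = i + v_i + 1$. Condition (i) of \Cref{def:bracket} applied to $v'$ gives $v'_i \leq n-i$, hence $v_i < n-i$ and $j \leq n$, so $j$ is a valid index.
\item Since $j > i$, we have $v_j = v'_j$. Put $v''_i = v_i + v_j + 1$, leaving the other coordinates unchanged.
\item It remains to show $v''_i \leq v'_i$. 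In the Dyck path picture of \Cref{cor:bij}, the $i$-th pair in $w'$ encloses the pairs $i+1, \dots, i+v'_i$. Because $v'_i \geq v_i + 1$, pair $j$ is among these. By condition (ii) of \Cref{def:bracket} for $v'$, the whole nested block of pair $j$ (covering indices $j, \dots, j+v'_j$) lies inside the $i$-th pair. This gives $j + v'_j \leq i + v'_i$, that is, $v_i + 1 + v_j \leq v'_i$.
\end{itemize}

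\textbf{Main obstacle.} The delicate point is the containment claim in the last step: that a pair nested in pair $i$ carries its entire nested block inside pair $i$. Condition (ii) as written states only the bound $v_{i+j} \leq v_i - j$. That bound is exactly $j + v'_j \leq i + v'_i$ after substitution, so it suffices; I would verify this substitution carefully.

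I would also check that $v''$ is itself a bracket vector. This is automatic, because it is the image of a Tamari cover under \Cref{lem:cover_bracket}. That lemma produces such a cover for every $i$ with $v_i < n-i$, which the Dyck move $AdDB \to ADdB$ (with $D$ the block of pair $j$) realizes.

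Both directions together show that the componentwise order on bracket vectors is exactly $\mathrm{Tam}_n$.
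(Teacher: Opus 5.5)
Your forward direction is fine, and so is the bound $v''_i \le v'_i$: the containment step you flagged is correct. The converse, however, has a genuine gap at the step you call automatic, namely that $v''$ is a bracket vector.

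\Cref{lem:cover_bracket} does not produce a cover for every $i$ with $v_i < n-i$. It assumes both vectors are already bracket vectors. The Dyck move $AdDB \lessdot ADdB$ that raises coordinate $i$ exists only when the $d$ closing the $i$-th pair of $w$ is immediately followed by a $u$. If that $d$ is instead followed by a $d$, the latter closes a pair $k<i$ with $k+v_k=i+v_i$. Condition (ii) at $k$ then forces $v''_i \le v_i$, which $v''_i = v_i+v_j+1$ violates.

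Choosing $i$ as the largest index with $v_i<v'_i$ does not exclude this case. Take $v=(1,0,0)$ (the path $uuddud$) and $v'=(2,1,0)$ (the path $uuuddd$). You get $i=2$, $j=3$ and $v''=(1,1,0)$, which the paper itself lists as a non-bracket vector. So the induction step fails. The actual chain here is $(1,0,0)\lessdot(2,0,0)\lessdot(2,1,0)$, and it raises coordinate $1$ first.

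The repair is to take $i$ to be the \emph{smallest} index with $v_i<v'_i$, and then check that the $d$ closing pair $i$ is followed by a $u$.
\begin{enumerate}
\item Suppose that $d$ were followed by a $d$ closing a pair $k<i$ with $k+v_k=i+v_i$. Then $v'_k\ge v_k\ge i-k$, so pair $i$ lies inside pair $k$ in $w'$ as well. Condition (ii) for $v'$ gives $k+v'_k\ge i+v'_i>i+v_i=k+v_k$, so $v_k<v'_k$, contradicting the minimality of $i$.
\item The $d$ also cannot be the last letter of $w$. That would force $v_i=n-i\ge v'_i$, contradicting $v_i<v'_i$.
\end{enumerate}
Hence the $d$ is followed by the $u$ of pair $j=i+v_i+1$. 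The move with $D$ the block of pair $j$ is then a genuine Tamari cover, and \Cref{lem:cover_bracket} identifies its bracket vector as $v''$.

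Your inequality still goes through, except that you now only have $v_j\le v'_j$ rather than equality:
\[
v''_i=v_i+1+v_j\le v_i+1+v'_j\le v'_i,
\]
where the last step is condition (ii) for $v'$ at offset $v_i+1\le v'_i$.

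For comparison, the paper states this corollary without proof, relying on \Cref{lem:cover_bracket} and the citation to Bj\"orner--Wachs. Only your forward direction has a counterpart there.
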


For the Dyck and Tamari lattices to be anatomic lattices, it remains to show that $h(x_\downarrow)_{\mathrm{Tam}_n} = h(x_\downarrow)_{\mathrm{Dyck}_n}$. By (iii) of \Cref{lem:cov}, it suffices to characterize the cover relations labeled $1$ in $\mathrm{Tam}_n$, which we do using Dyck paths. Furthermore, we compute $h(x_\downarrow)$ for any $x$ in the Dyck and Tamari lattices using bracket vectors.

\begin{theorem}[Height of the Dyck and Tamari lattices]
\label{thm:dtam}
    The Dyck lattice $\mathrm{Dyck}_n$ and the Tamari lattice $\mathrm{Tam}_n$ are anatomic lattices. Consequently, the alt-Tamari lattices are anatomically related. In addition, for elements $x$ and $y$ in $\mathrm{Dyck}_n$ and $\mathrm{Tam}_n$ with bracket vectors $v = (v_1,v_2, \dots, v_n)$ and $v' = (v_1',v_2', \dots, v_n')$, $h(x_\downarrow) = \sum_{i=1}^{n} v_i$.  
\end{theorem}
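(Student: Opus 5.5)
The plan is to compute $h(x_\downarrow)$ separately in $\mathrm{Dyck}_n$ and in $\mathrm{Tam}_n$ and show both equal $\sum_{i=1}^n v_i$, where $v$ is the bracket vector of $x$ under \Cref{cor:bij}. Equality of the two heights, together with \Cref{prop:d_extends_t}, is exactly the definition of anatomic lattices (\Cref{def:anatomic}). For the alt-Tamari lattices $\mathcal{L}$ we use that $\mathrm{Tam}_n \subseteq \mathcal{L} \subseteq \mathrm{Dyck}_n$ as orders~\cite{C1}. Applying (iii) of \Cref{prop:height} twice gives
$$h(x_\downarrow)_{\mathrm{Tam}_n}\le h(x_\downarrow)_{\mathcal{L}}\le h(x_\downarrow)_{\mathrm{Dyck}_n}.$$
The two ends are equal, so the alt-Tamari lattices are all anatomic with $\mathrm{Tam}_n$, hence anatomically related.

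\textbf{Dyck side.} I interpret $\sum_i v_i$ as a Dyck-path statistic. Each $v_i$ counts the pairs nested strictly inside the $i$-th matched pair. So $\sum_i v_i$ counts nested pairs of pairs, which equals $\sum_j \mathrm{depth}(j)$, the number of pairs strictly enclosing pair $j$. The depth of pair $j$ is the height at which its $u$ starts. Hence $\sum_i v_i=\sum_{u\text{-steps}} (\text{starting height})$.

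Now consider a cover $AduB \lessdot AudB$ in \Cref{def:dyck}. Only the moved $u$ changes its starting height, and that height goes up by exactly $1$. So every cover of $\mathrm{Dyck}_n$ raises $\sum_i v_i$ by exactly $1$. The bottom element $(ud)^n$ has $v=(0,\dots,0)$. Therefore every maximal chain from $\hat 0$ to $x$ has length $\sum_i v_i$, and $h(x_\downarrow)_{\mathrm{Dyck}_n}=\sum_i v_i$.

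\textbf{Tamari side, upper bound.} By \Cref{lem:cover_bracket}, a Tamari cover increases a single coordinate $v_i$ by $v_{i+v_i+1}+1\ge 1$. So $\sum_i v_i$ strictly increases along chains, giving $h(x_\downarrow)_{\mathrm{Tam}_n}\le\sum_i v_i$. This also follows from (iii) of \Cref{prop:height} together with the Dyck computation.

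\textbf{Tamari side, lower bound.} This is the step I expect to be the main obstacle. I will show that every $x\neq(ud)^n$ has a lower Tamari cover $y\lessdot x$ whose sum is exactly one less. Induction on $\sum_i v_i$ then produces a chain of length $\sum_i v_i$ from $\hat 0$ to $x$.

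Since $x\ne (ud)^n$, $x$ contains a factor $dd$; take its first occurrence, at positions $j,j+1$. The letter at $j-1$ must be $u$, since otherwise a $dd$ would occur earlier. So $x=A\,(ud)\,d\,B$, which has the form $ADdB$ of \Cref{def:tam} with $D=ud$ of minimal size.

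The height after this $u$ is at least $2$, because two $d$'s follow. Hence the height before it is at least $1$, so $A$ is nonempty and $y=AdDB=A\,d\,ud\,B$ is a valid Dyck path. Thus $y\lessdot x$ in $\mathrm{Tam}_n$. In bracket vectors, the pair closed by the moved $d$ has its coordinate changed by $v_{i+v_i+1}+1=0+1$, since $D$ has bracket value $0$. So $\sum_i v_i$ drops by exactly $1$, which closes the induction.

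Combining the three parts gives $h(x_\downarrow)_{\mathrm{Tam}_n}=h(x_\downarrow)_{\mathrm{Dyck}_n}=\sum_i v_i$, which is the claimed formula and finishes the argument. The facts that need care are the depth interpretation of $\sum_i v_i$ and the check that $A$ is nonempty.
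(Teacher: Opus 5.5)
Your proposal is correct and uses the same mechanism as the paper's proof. The covers $A\,d\,ud\,B \lessdot A\,ud\,d\,B$ (the case $D = ud$ of \Cref{def:tam}) lie in both $\mathrm{Dyck}_n$ and $\mathrm{Tam}_n$ and raise exactly one bracket coordinate by $1$, while every Dyck cover raises $\sum_i v_i$ by exactly $1$. Where the paper asserts that these covers make up the longest chains and then invokes (iii) of \Cref{lem:cov}, your first-$dd$ argument actually proves that every $x \neq (ud)^n$ has such a lower cover, and your sandwich $\sum_i v_i \le h(x_\downarrow)_{\mathrm{Tam}_n} \le h(x_\downarrow)_{\mathrm{Dyck}_n} = \sum_i v_i$ takes the place of that lemma, so your version supplies the step the paper leaves implicit.
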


\begin{proof}
    We characterize the cover relations $x \lessdot y$ in the longest chains of $y_\downarrow$ of the Dyck lattice $\mathrm{Dyck}_n$. By \Cref{def:dyck}, cover relations in $\mathrm{Dyck}_n$ are of the form $AduB \lessdot AudB$. Consider the difference in the numbers of letters $u$ of $AduB$ and $AudB$. This difference is $1$ at the index of $d$ as written in $AduB$ or $u$ as written in $AudB$. Otherwise, it is $0$. Since all cover relations are of the form $AduB \lessdot AudB$, by transitivity, all maximal chains in $y_\downarrow$ have the same number of cover relations. The sum of the differences along the cover relations is the number of cover relations. 

    Consider $AduB \lessdot AudB$ such that $B = dB'$, i.e., $AdudB' \lessdot AuddB'$. Observe that $ud$ is the Dyck path of size $1$. Consider the Tamari lattice $\mathrm{Tam}_n$. By \Cref{def:tam}, its cover relations are of the form $AdDB \lessdot ADdB$, where $D$ is the Dyck path of minimal size following $d$ as written in $AdDB$. If the size of $D$ is $1$, then $D=ud$ and $AdDB = AdudB \lessdot ADdB = AuddB$. It follows that $AdudB \lessdot AuddB$ is a cover relation in $\mathrm{Dyck}_n$ and $\mathrm{Tam}_n$.

    By \Cref{prop:d_extends_t} and \Cref{prop:height}, $\mathrm{Dyck}_n$ extends $\mathrm{Tam}_n$ and $h(x_\downarrow)_{\mathrm{Tam}_n} \leq h(x_\downarrow)_{\mathrm{Dyck}_n}$. The above arguments show that cover relations $AdudB \lessdot AuddB$ are those in the longest chains in $x_\downarrow$ for any $x$ in $\mathrm{Dyck}_n$ and $\mathrm{Tam}_n$. By (iii) of \Cref{lem:cov}, $h(x_\downarrow)_{\mathrm{Tam}_n} = h(x_\downarrow)_{\mathrm{Dyck}_n}$. It follows by \Cref{def:anatomic} that $\mathrm{Dyck}_n$ and $\mathrm{Tam}_n$ are anatomic lattices, and by ~\cite{C1}, as mentioned in \Cref{subsec:org}, that the alt-Tamari lattices are anatomically related.

    Using \Cref{lem:cover_bracket}, $AdudB \lessdot AuddB$ is $v = (v_1, v_2, \dots, v_i, \dots, v_n) \lessdot v' = (v_1, v_2, \dots, v_i + v_{(i+v_i+1)} + 1, \dots, v_n)$ such that $v_{(i+v_i+1)} = 0$, i.e., $v = (v_1, v_2, \dots, v_i, \dots, v_n) \lessdot v' = (v_1, v_2, \dots, v_i + 1, \dots, v_n)$. By transitivity, $h(x_\downarrow) = \sum_{i=1}^{n} v_i$ for any $x$ in $\mathrm{Dyck}_n$ and $\mathrm{Tam}_n$. 
\end{proof}

\begin{corollary}[Height of intervals in the Dyck lattice]
\label{cor:h_intervals}
    For comparable elements $x$ and $y$ in $\mathrm{Dyck}_n$ with bracket vectors $(v_1,v_2, \dots, v_n)$ and $(v_1',v_2', \dots, v_n')$ of $x$ and $y$, respectively, $h([x,y]) = \sum_{i=1}^{n} v_i' - v_i$.
\end{corollary}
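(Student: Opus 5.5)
The plan is to show that $\mathrm{Dyck}_n$ is graded by the rank function $\rho(w)=\sum_{i=1}^n v_i$, where $v$ is the bracket vector of $w$ from \Cref{cor:bij}. The formula for $h([x,y])$ then follows from gradedness.

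\textbf{Step 1: a Dyck cover raises $\rho$ by exactly one.} Take a cover $w=AduB\lessdot w'=AudB$ from \Cref{def:dyck}. Swapping the adjacent letters $d$ and $u$ changes exactly one $u$--$d$ pairing in the matching of \Cref{cor:bij}.

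Suppose the $d$ in $AduB$ closes the pair opened at some $u$ in $A$. After the swap, that $u$ is now paired with the $d$ that sits one step to the right, and it now encloses one additional pair. So its entry $v_k$ goes up by one.

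The other pairs keep the same inclusions, except for the pair opened by the swapped $u$. Its interior count is unchanged, since the interval strictly between it and its matching $d$ has the same content. This is exactly the phenomenon already seen in the proof of \Cref{thm:dtam}, where a cover of the form $AdudB\lessdot AuddB$ increases one $v_i$ by $1$.

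The key subtlety, and the step I expect to be the main obstacle, is checking this matching bookkeeping carefully in the general case where $B$ does not begin with $d$. The cleanest way to do it is via the alternative description $\sum_i v_i=\tfrac12\big(\text{area under }w\big)-\tfrac n2$. In that form each $v_k$ is the half-area contributed by the $k$-th pair, and a $du\to ud$ swap adds exactly one unit cell of area. I would prove this area identity first, by summing, over each pair, the height profile strictly between its letters. Once it is in hand, Step 1 is immediate.

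\textbf{Step 2: conclude.} By Step 1, every maximal chain from $x$ to $y$ in $\mathrm{Dyck}_n$ has exactly $\rho(y)-\rho(x)$ cover relations. Hence $h([x,y])=\sum_{i=1}^n (v_i'-v_i)$.

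This is consistent with \Cref{prop:height}(ii), because \Cref{thm:dtam} gives $h(x_\downarrow)=\rho(x)$, so the bound there is attained with equality. Note that the same formula cannot be deduced for $\mathrm{Tam}_n$ by this route, since Tamari covers can carry labels larger than $1$. That is why the statement is restricted to the Dyck lattice.
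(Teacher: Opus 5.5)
Your argument is correct provided Step 1 goes through the area identity. The pairing bookkeeping you sketch first is wrong for general $B$ and should be dropped.

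Here is what actually happens. Suppose the $d$ of $AduB$ closes the pair opened by some $u_a$ in $A$. Suppose the swapped $u$ is the $i$-th $u$, opening a pair that is closed by some $d_b$ in $B$ and encloses $v_i$ pairs. After the swap, the adjacent $ud$ is itself a matched pair, so the $i$-th entry drops to $0$. Meanwhile $u_a$ becomes matched with $d_b$, and its entry grows by $1+v_i$. For example, $uduudd\lessdot uududd$ sends $(0,1,0)$ to $(2,0,0)$. The net change is still $+1$, but your description (one entry up by one, the swapped pair's count unchanged) holds only when $B$ begins with $d$.

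The area route avoids this, once the conventions are fixed. Draw the path with $(1,\pm1)$ steps. The horizontal layer belonging to the $k$-th pair is then a trapezoid of area $2v_k+1$, so the half-area per pair is $v_k+\tfrac12$, not $v_k$. Since these layers tile the region under the path, $\sum_i v_i=\tfrac12\,\mathrm{area}-\tfrac n2$. A valley-to-peak swap adds a diamond of area $2$, so $\sum_i v_i$ rises by exactly one. Equivalently, $\sum_k v_k$ counts nested pairs of pairs, which equals $\sum(h-1)$ over $u$-steps ending at height $h$. The swap raises exactly one such $h$ by one and leaves the others unchanged.

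The paper's proof is a two-line deduction. It uses that $\mathrm{Dyck}_n$ is graded, from the prefix-$u$-count argument in the proof of \Cref{thm:dtam}, so that $h([x,y])=h(y_\downarrow)-h(x_\downarrow)$. It then imports $h(x_\downarrow)=\sum_i v_i$ from \Cref{thm:dtam}. That value was obtained through the Tamari lattice: only the special covers $AdudB\lessdot AuddB$ are ever translated into bracket vectors, and the statement for $\mathrm{Dyck}_n$ rests on the anatomic equality $h(x_\downarrow)_{\mathrm{Tam}_n}=h(x_\downarrow)_{\mathrm{Dyck}_n}$.

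Your route bypasses \Cref{thm:dtam}. It proves directly that $\sum_i v_i$ is a rank function for $\mathrm{Dyck}_n$ by handling every cover $AduB\lessdot AudB$, which gives gradedness and the identification of the rank in one stroke. This is self-contained and supplies a check the paper leaves implicit, at the cost of proving the area identity. The paper's route buys brevity, given the theorem. Your identity is in fact exactly the link to the paper's own grading function. Writing $U_t$ for the number of $u$'s among the first $t$ letters, one has $\sum_i v_i=\sum_{t=1}^{2n}U_t-n(n+1)$.
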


\begin{proof}
    Recall from (ii) of \Cref{lem:cov} that $h([x,y]) = h(y_\downarrow) - h(x_\downarrow)$ for $[x,y]$ in a longest chain in a lattice. Let $(v_1,v_2, \dots, v_n)$ and $(v_1',v_2', \dots, v_n')$ be bracket vectors of $x$ and $y$, respectively. Since all maximal chains in $\mathrm{Dyck}_n$ are longest chains, $h([x,y]) = h(y_\downarrow) - h(x_\downarrow) = \sum_{i=1}^{n} v_i' - v_i$ for any interval $[x,y]$ in $\mathrm{Dyck}_n$.
\end{proof}

Cover relations $AdudB \lessdot AuddB$, by (iii) of \Cref{lem:cov}, are cover relations that the Dyck and Tamari lattices have in common. By \Cref{cor:sk}, they induce a semilattice. By \Cref{def:sk}, it is the skeletal poset $\mathrm{SK}(\mathrm{Tam}_n)$, which we call the \emph{Dyck-Tamari poset} $\mathrm{DTam}_n$.

\begin{definition}[Dyck-Tamari poset]
\label{def:dtam}
    The \emph{Dyck-Tamari poset} $\mathrm{DTam}_n$ is the reflexive transitive closure of cover relations $w = AdudB \lessdot w' = AuddB$, where $w$ and $w'$ are Dyck paths of size $n$ and $A$ is nonempty.
\end{definition}

\begin{figure}[htp]
    \centering
    \includegraphics[width=0.5\linewidth]{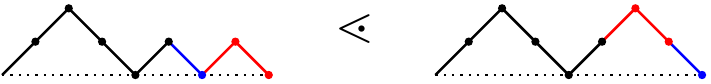}
    \caption{$u_1u_2d_3d_4u_5\textcolor{blue}{d_6}\textcolor{red}{u_7d_8} \lessdot u_1u_2d_3d_4u_5\textcolor{red}{u_6d_7}\textcolor{blue}{d_8}$ in $\mathrm{DTam}_4$.}
    \label{fig:cover_dtam}
\end{figure}

We end this section by counting cover relations in the Dyck-Tamari poset (for \Cref{sec:sk_tam}) using an idea of Galor and highlight a remark by Björner and Wachs (for \Cref{sec:alt}; also see \Cref{fig:tam}).

\begin{proposition}[Number of cover relations in the Dyck-Tamari poset]
\label{prop:num_cov}
    The number of cover relations $AdudB \lessdot AuddB$ for Dyck paths $AdudB, AuddB$ of size $n + 1$ is $n\mathrm{Cat}_{n}$, where $\mathrm{Cat}_0 = 1$ and $\mathrm{Cat}_n = \sum_{i=0}^{n-1} \mathrm{Cat}_i\mathrm{Cat}_{n-i-1}$ is a Catalan number.
\end{proposition}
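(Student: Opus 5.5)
We will count pairs $(w, \text{position})$, where $w = AdudB$ is a Dyck path of size $n+1$ and the marked position is an occurrence of the factor $dud$ with $A$ nonempty. Each such occurrence determines exactly one cover relation $AdudB \lessdot AuddB$ in $\mathrm{DTam}_{n+1}$, and distinct occurrences give distinct pairs. So the number of cover relations equals $\sum_{w} \#\{\text{occurrences of } dud \text{ in } w\}$, summed over Dyck paths of size $n+1$. The condition that $A$ is nonempty is automatic, since a Dyck path cannot begin with $d$. By the bracket-vector translation in the proof of \Cref{thm:dtam}, an occurrence of $dud$ corresponds to an index $i$ with $v_{i+v_i+1}=0$ and $i+v_i+1 \le n+1$. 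We will nonetheless work with paths, following Galor's idea of contracting the middle peak.

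The key step is a bijection. Deleting the middle $ud$ of an occurrence $d\,ud$ gives the map $(AdudB) \mapsto AdB$, which produces a Dyck path of size $n$ together with a marked $d$ that is followed by one more letter. The last letter of a path is excluded, because after the deletion the $d$ of $dud$ that sits immediately before $B$ still needs a following letter. More precisely, let $w'=AdB$ and mark the occurrence of $d$ that ends $A d$. Conversely, given a Dyck path $w'$ of size $n$ and any $d$ in $w'$ that is not the final letter, we insert $ud$ right after it. This yields a Dyck path of size $n+1$ containing $d\,ud$ at that spot, and the new $ud$ is followed by something, since the marked $d$ was not last. Inserting $ud$ keeps heights nonnegative, so the result is a Dyck path, and the two maps are mutually inverse.

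The count then follows. Each Dyck path of size $n$ has $n$ letters $d$, of which exactly one is the final letter. This leaves $n-1$ admissible positions, giving $(n-1)\mathrm{Cat}_n$, which does not match $n\mathrm{Cat}_n$. So the bijection must be set up more carefully, and that is the main obstacle. I will recheck the endpoint condition first. In $AdudB$ the string $B$ may be empty, because \Cref{def:dtam} only requires $A$ to be nonempty. The occurrence may therefore be the final $dud$, and the marked $d$ in $AdB$ is then allowed to be the final letter. With this correction every one of the $n$ letters $d$ of $w'$ is admissible, and the count becomes $n\mathrm{Cat}_n$ as claimed.

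To make the argument fully rigorous, I will check three things: that deletion and insertion preserve the Dyck condition, that they are inverse to each other (the inserted $ud$ is precisely the deleted middle peak), and that distinct marked positions give distinct cover relations. As a sanity check I will verify the small cases by hand: $n=1$ gives $1$ and $n=2$ gives $4$, matching the cover relations of $\mathrm{DTam}_2$ and $\mathrm{DTam}_3$ computed from \Cref{fig:sk_tam_3}.
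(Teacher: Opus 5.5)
Your final argument is correct and is essentially the paper's proof, which is Galor's idea of inserting $ud$ after each of the $n$ letters $d$ of a Dyck path of size $n$; your explicit inverse, deleting the middle $ud$ of an occurrence of $dud$, supplies the bijectivity that the paper leaves implicit. When you write it up, remove the false start that excludes the final $d$ (as you yourself note, $B$ may be empty), and also remove the bracket-vector aside: as stated it additionally needs the $d$ closing the $i$-th pair to be immediately followed by a $u$ (for example, $i=2$ in $uuddud$ satisfies your condition but gives no occurrence of $dud$).
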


\begin{proof}
    The idea for this proof is due to Galor. For any Dyck path $w$ of size $n$, there are $n$ letters $d$ by \Cref{def:dyck_path}, so $n$ ways to write $w = AdB$ or $w' = AdudB$, where $w'$ is a Dyck path of size $n+1$. Thus, the total number of ways is $nC_n$. Since $\lessdot : AdudB \rightarrow AuddB$ is a one-to-one, or injective, map, it follows that there are $nC_n$ cover relations $AdudB \lessdot AuddB$.
\end{proof}

\begin{remark}[Embedding of the Tamari lattice, 10.3. Example in~\cite{BW}]
\label{rem:embed}
Recall that bracket vectors $v$ ordered by $\leq$ define the Tamari lattice. Since $v \in \mathbb{Z}_{\geq 0}^{d-1}$ for $d$ the size of $v$, the Tamari lattice can be embedded into $(\mathbb{Z}_{\geq 0}^{d-1}, \leq)$, i.e., it is an induced poset of $x_\downarrow$ where $x = (n-1, n-2, \dots, 1) \in \mathbb{Z}_{\geq 0}^{d-1}$.  
\end{remark}

\begin{figure}[htp]
    \centering
    \includegraphics[width=0.8\linewidth]{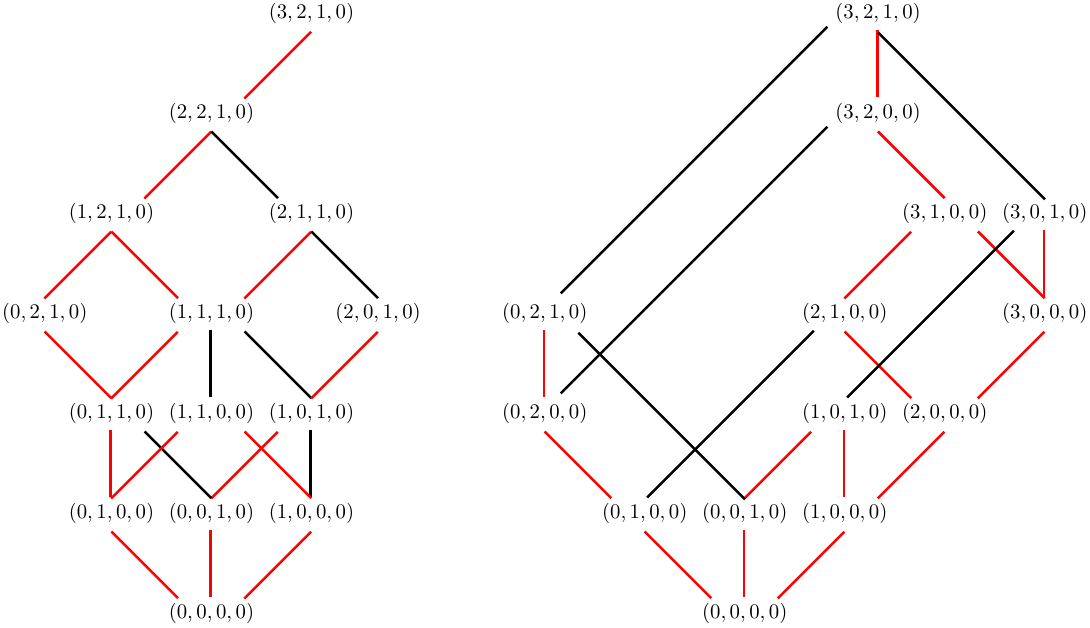}
    \caption{The Dyck lattice $\mathrm{Dyck}_4$ (left) and the Tamari lattice $\mathrm{Tam}_4$ (right) as induced posets in the grid $\mathbb{Z}_{\geq 0}^3$. Note the added $0$ is for comparison with bracket vectors. \textcolor{red}{Cover relations} in \textcolor{red}{red} induce the Hasse diagram of the Dyck-Tamari poset $\mathrm{DTam}_4$; see \Cref{sec:alt} for further details.}
    \label{fig:tam}
\end{figure}

\section{Skeletal posets of Tamari lattices}
\label{sec:sk_tam}

This section is independent of \Cref{sec:alt} and \Cref{sec:kneser}. The goal here is to study not one, but two ``skeletal'' posets of the Tamari lattice. We enumerate the intervals in these posets and map intervals in one skeletal poset to the other. The section ends with generalizations of the skeletal posets for the $m$-Tamari lattice.  

\subsection{The Kreweras-Tamari poset}
\label{subsec:w_sk}

Recall that cover relations in $\mathrm{DTam}_n$ are of the form $AdudB \lessdot AuddB$ by \Cref{def:dtam}. Consider $AuduB$. Using the same argument from \Cref{prop:num_cov}, there are $nC_n$ ways to write a Dyck path $AuduB$ of size $n+1$. Since any letter $u$ is uniquely paired with a letter $d$ as noted in the bijection from \Cref{cor:bij}, we may consider $AudDB = A'dDB$ instead, where $D$ is the Dyck path of minimal size. By \Cref{def:tam}, $A'dDB \lessdot A'DdB$ is a cover relation in the Tamari lattice $\mathrm{Tam}_n$. By the same argument from the proof of \Cref{prop:num_cov}, there are $nC_n$ cover relations $AudDB \lessdot AuDdB$, where $AudDB$ and $AuDdB$ are Dyck paths of size $n+1$. For emphasis, this observation is also due to Galor. 

Besides their enumeration, cover relations $AudDB \lessdot AuDdB$ satisfy another property similar to $AdudB \lessdot AuddB$. Consider the lattice defined below. 

\begin{definition}[Kreweras lattice~\cite{BB}]
\label{def:krew}
    The Noncrossing partition or \emph{Kreweras lattice} $\mathrm{Krew}_n$ is the reflexive transitive closure of cover relations $w = Aud^k\mathfrak{D}B \lessdot w' = Au\mathfrak{D}d^kB$, where $d^k = dd \cdots d$, i.e., consecutive $k$ letters $d$ for $1 \leq k \leq n-1$, $w$ and $w'$ are Dyck paths of size $n$, and $\mathfrak{D} = u \cdots d$ is a Dyck path.
\end{definition}

\begin{figure}[htp]
    \centering
    \includegraphics[width=0.5\linewidth]{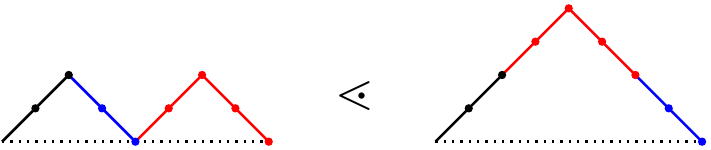}
    \caption{$u_1u_2\textcolor{blue}{d_3d_4}\textcolor{red}{u_5u_6d_7d_8} \lessdot u_1u_2\textcolor{red}{u_3u_4d_5d_6}\textcolor{blue}{d_7d_8}$ in $\mathrm{Krew}_4$.}
    \label{fig:cover_krew}
\end{figure} 

When $k=1$ in \Cref{def:krew}, we have $AudDB \lessdot AuDdB$. Similar to the Dyck-Tamari poset $\mathrm{DTam}_n$, cover relations $AudDB \lessdot AuDdB$ are cover relations that the Kreweras and Tamari lattices have in common. We call the poset induced by these cover relations the \emph{Kreweras-Tamari poset} $\mathrm{KTam}_n$.

\begin{definition}[Kreweras-Tamari poset]
\label{def:ktam}
    The \emph{Kreweras-Tamari poset} $\mathrm{KTam}_n$ is the reflexive transitive closure of cover relations $w = AudDB \lessdot w' = AuDdB$, where $w$ and $w'$ are Dyck paths of size $n$, and $D = u \cdots d$ is the Dyck path of minimal size.
\end{definition}

\begin{figure}[htp]
    \centering
    \includegraphics[width=0.5\linewidth]{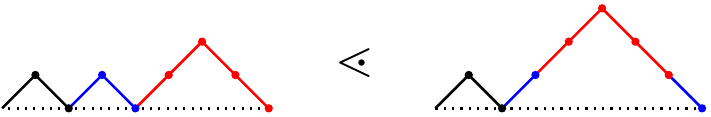}
    \caption{$u_1d_2\textcolor{blue}{u_3d_4}\textcolor{red}{u_5u_6d_7d_8} \lessdot u_1d_2\textcolor{blue}{u_3}\textcolor{red}{u_4u_5d_6d_7}\textcolor{blue}{d_8}$ in $\mathrm{KTam}_4$.}
    \label{fig:cover_ktam}
\end{figure}

As mentioned, there are $nC_n$ cover relations $AudDB \lessdot AuDdB$ in $\mathrm{KTam}_{n+1}$. Although we sketched a proof earlier, let us give a proof via a bijection between $AudDB \lessdot AuDdB$ and $AdudB \lessdot AuddB$. 

\begin{proposition}[A bijection between cover relations in the Dyck-Tamari poset and cover relations in the Kreweras-Tamari posets]
\label{prop:cov_bij}
    There exists a bijection between cover relations $AdudB \lessdot AuddB$ and cover relations $AudDB \lessdot AuDdB$. 
\end{proposition}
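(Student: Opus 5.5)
The plan is to route both families through the same intermediate object, namely pairs $(w,j)$ where $w$ is a Dyck path of size $n$ and $j$ indexes one of its $n$ letters $d$ (equivalently one of its $n$ letters $u$). This mirrors the counting argument of \Cref{prop:num_cov} and the remark preceding \Cref{def:krew}. Both sets of cover relations in size $n+1$ will be put in bijection with this set of $n\mathrm{Cat}_n$ pairs, and the desired bijection is the composite.

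\textbf{Step 1 (Dyck-Tamari side).} Each cover relation $AdudB \lessdot AuddB$ is determined by its lower element together with the chosen factor $dud$. Deleting the inserted $ud$ gives $w = AdB$ with a marked letter $d$. Conversely, inserting $ud$ immediately after a marked $d$ of a Dyck path of size $n$ yields a Dyck path $AdudB$ of size $n+1$, and swapping gives the valid Dyck path $AuddB$. Since $A$ contains the $u$ matched to the marked $d$, $A$ is nonempty, as \Cref{def:dtam} requires. These two maps are mutually inverse, as in \Cref{prop:num_cov}.

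\textbf{Step 2 (Kreweras-Tamari side).} A cover relation $AudDB \lessdot AuDdB$ is determined by its lower element $AudDB$ and the distinguished factor $ud$, which is immediately followed by $D$. Here $D$ is the minimal Dyck factor beginning right after that $d$, so it must exist, i.e.\ the next letter is $u$. Deleting the $ud$ gives $ADB$, a Dyck path of size $n$ with a marked letter $u$, namely the first letter of $D$.

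Conversely, given a marked $u$ in a Dyck path $w$ of size $n$, write $w = ADB$ with $D$ the minimal Dyck factor starting at that $u$. This $D$ is the span from the marked $u$ to its matched $d$, via the pairing of \Cref{cor:bij}. Inserting $ud$ before $D$ gives $AudDB$ of size $n+1$, which is a Dyck path since a factor $ud$ can be inserted anywhere. Applying the Tamari move produces $AuDdB$, a valid cover in $\mathrm{KTam}_{n+1}$ by \Cref{def:ktam}. The two maps are mutually inverse because the decomposition $ADB$ is uniquely determined by the marked $u$.

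\textbf{Step 3 (matching marked letters).} It remains to biject marked $d$'s with marked $u$'s in a fixed $w$. Use the matching from \Cref{cor:bij}: the $i$-th pair consists of a $u$ and its matched $d$. Send the marked $d$ to its matched $u$. This is a bijection on the $n$ pairs of $w$, so composing Steps 1–3 gives the bijection. Explicitly, $AdudB \lessdot AuddB$ with $w = AdB$ corresponds to the cover obtained by inserting $ud$ just before the Dyck factor $D$ closed by that same $d$. In bracket-vector language, both covers raise the coordinate of the $i$-th pair in the appropriate way, as in \Cref{lem:cover_bracket}, which gives a sanity check.

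\textbf{Main obstacle.} The delicate point is Step 2: showing that the inverse map always lands in the defining form of \Cref{def:ktam}, and that it recovers the same cover rather than another factorization. The issue is uniqueness: in a path such as $\cdots ud\,ud \cdots$, several occurrences of $ud$ could serve as the distinguished factor. This is resolved by treating a cover relation as a pair (lower element, position of the move), and by observing that the move determines the upper element injectively. I would verify this carefully, as the paper does in \Cref{prop:num_cov}, so that distinct marked positions give distinct cover relations.
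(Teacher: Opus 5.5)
Your proposal is correct and follows essentially the same route as the paper's proof. Both fix a Dyck path of size $n$ and a matched pair $(u,d)$ from \Cref{cor:bij}, then insert $ud$ right after that $d$ to get $AdudB \lessdot AuddB$, or right before that $u$ to get $AudDB \lessdot AuDdB$. Your treatment of uniqueness is more explicit than the paper's bare appeal to injectivity in \Cref{prop:num_cov}: you regard a cover as the pair (lower, upper) and recover the insertion position from where the two paths first differ.
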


\begin{figure}[htp]
    \centering
    \includegraphics[width=0.8\linewidth]{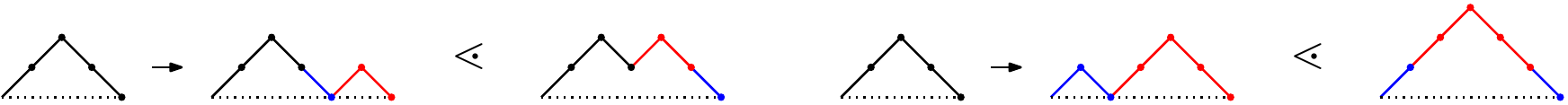}
    \caption{\textbf{Left}: $u_1u_2d_3d_4, u_1u_2d_3\textcolor{blue}{d_4}\textcolor{red}{u_5d_6} \lessdot u_1u_2d_3\textcolor{red}{u_4d_5}\textcolor{blue}{d_6}$ in $\mathrm{DTam}_3$. \textbf{Right}: $u_1u_2d_3d_4, \textcolor{blue}{u_1d_2}\textcolor{red}{u_3u_4d_5d_6} \lessdot \textcolor{blue}{u_1}\textcolor{red}{u_2u_3d_4d_5}\textcolor{blue}{d_6}$ in $\mathrm{KTam}_3$.}
    \label{fig:cov_bij}
\end{figure}

\begin{proof}
    For any Dyck path $w$, there are $n$ pairs of letters $u$ and $d$ as described in the bijection from \Cref{cor:bij}. Consider such a pair of letters $u$ and $d$. For $u$, we write $AuduB$. For $d$, we write $AdudB$. The second $u$, or the letter $u$ as written in $AuduB$ with a larger index, is paired with a letter $d$. 

    The pair of letters $u$ and $d$ defines a Dyck path as described in \Cref{cor:bij}. We rewrite $AuduB$ as $AudDB$, where $D$ is the Dyck path of minimal size following $d$ of $AudDB$. The cover relations $AdudB \lessdot AuddB$ and $AudDB \lessdot AuDdB$ are injective maps as noted in \Cref{prop:num_cov}. We have a bijection as desired, and there are $nC_n$ cover relations $AudDB \lessdot AuDdB$ by \Cref{prop:num_cov}.
\end{proof}

For reference, the figure below depicts the Dyck, Kreweras, and Tamari lattices with the Dyck-Tamari and Kreweras-Tamari posets as induced posets of cover relations in common between these lattices.

\begin{figure}[htp]
    \centering
    \includegraphics[width=0.8\linewidth]{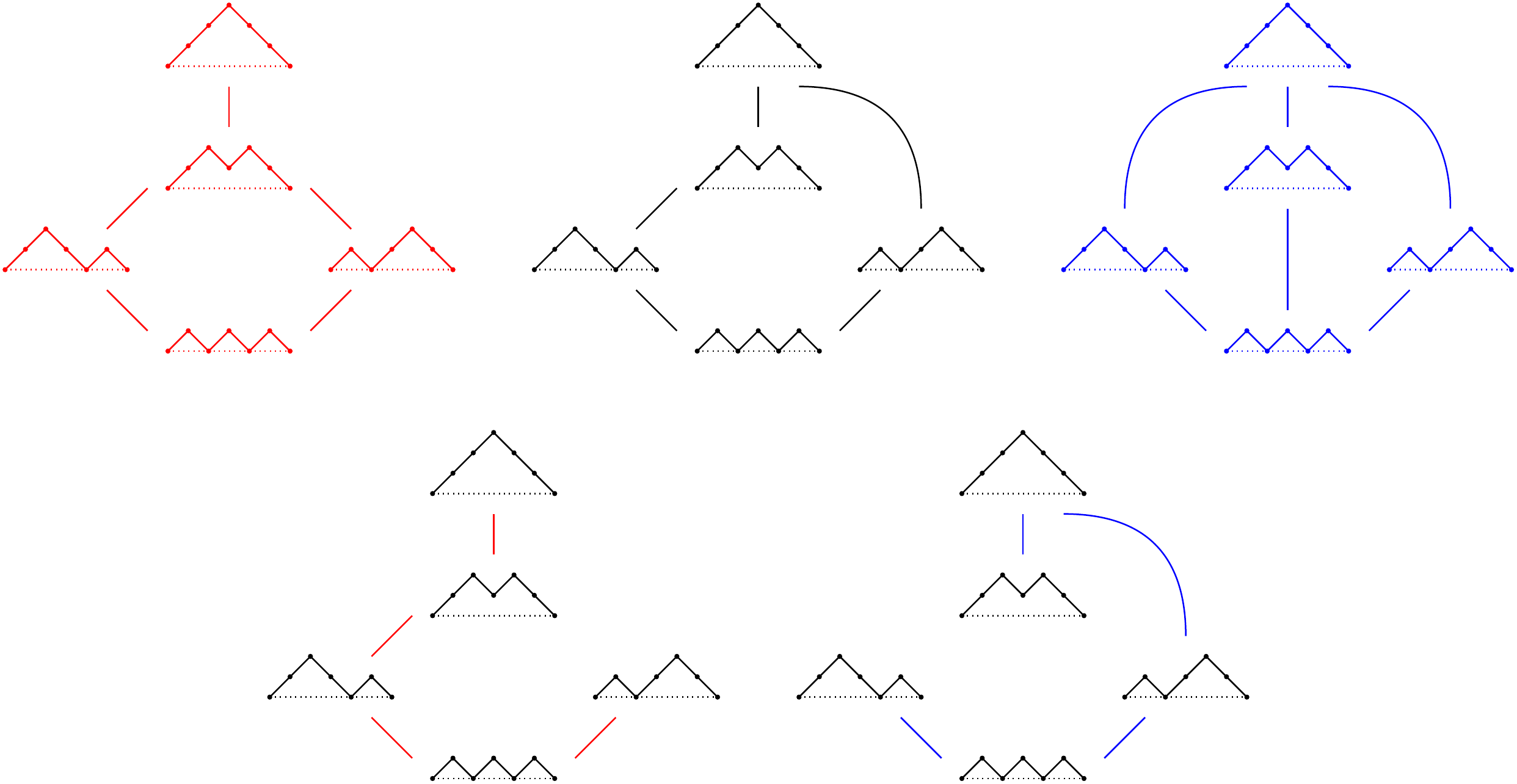}
    \caption{\textbf{Top}: The \textcolor{red}{Dyck lattice} $\textcolor{red}{\mathrm{Dyck}_3}$ in \textcolor{red}{red} (leftmost), the Tamari lattice $\mathrm{Tam}_3$ (middle), the \textcolor{blue}{Kreweras lattice} $\textcolor{blue}{\mathrm{KTam}_3}$ (rightmost). \textbf{Bottom}: The \textcolor{red}{Dyck-Tamari poset} $\textcolor{red}{\mathrm{DTam}_3}$ (left) induced by \textcolor{red}{cover relations} in \textcolor{red}{red} and the \textcolor{blue}{Kreweras-Tamari poset} $\textcolor{blue}{\mathrm{KTam}_3}$ (right) induced by \textcolor{blue}{cover relations} in \textcolor{blue}{blue}.}
    \label{fig:cat}
\end{figure}

\subsection{Properties of the Kreweras-Tamari poset}
\label{subsec:prop_w_sk}

Although the Kreweras-Tamari poset $\mathrm{KTam}_n$ is not a skeletal poset in the sense of \Cref{def:sk}, it satisfies properties similar to such a poset, namely the Dyck-Tamari poset $\mathrm{DTam}_n$; the enumeration of cover relations and \Cref{def:ktam}, for example. Before continuing, we show that the Kreweras lattice is a refinement of the Tamari lattice.

\begin{proposition}[Kreweras lattice refines Tamari lattice]
\label{prop:k_refines_t}
    The Kreweras lattice $\mathrm{Krew}_n$ is a \newline refinement of the Tamari lattice $\mathrm{Tam}_n$.
\end{proposition}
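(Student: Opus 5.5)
We need to show that $w \leq w'$ in $\mathrm{Krew}_n$ implies $w \leq w'$ in $\mathrm{Tam}_n$. Both lattices are reflexive transitive closures of their cover relations, so by transitivity it suffices to check the cover relations. Concretely, every Kreweras cover $Aud^k\mathfrak{D}B \lessdot Au\mathfrak{D}d^kB$ of \Cref{def:krew} must be shown to satisfy $Aud^k\mathfrak{D}B \leq Au\mathfrak{D}d^kB$ in $\mathrm{Tam}_n$. This mirrors the proof of \Cref{prop:d_extends_t}, with the roles reversed.

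The plan is to realize each Kreweras cover as a chain of Tamari covers from \Cref{def:tam}. First decompose $\mathfrak{D}$ into its primes, i.e.\ minimal-size Dyck paths: $\mathfrak{D} = D_1D_2\cdots D_m$. Starting from $w = Aud^k D_1 \cdots D_m B$, move the block $D_1D_2\cdots D_m$ leftward past the letters $d$ one at a time. Each step is a single Tamari cover of the form $A''dD_jB'' \lessdot A''D_jdB''$.

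The order of the moves is as follows. Take the last $d$ of $d^k$, which immediately precedes $D_1$, and pass $D_1$ across it. That $d$ now precedes $D_2$, so pass $D_2$ across it, and continue through $D_m$. At that point the whole block $\mathfrak{D}$ sits immediately after the first $k-1$ letters $d$. Repeat the procedure with the next $d$, for $k$ rounds in total, until we reach $Au\mathfrak{D}d^kB$. Each intermediate word is a Dyck path: the prefix heights only increase, because every move replaces $dD_j$ by $D_jd$, which raises the path by one along $D_j$. Each move also uses a nonempty prefix, since every prefix contains $Au$. So every step is a legitimate Tamari cover, and transitivity gives $w \leq w'$ in $\mathrm{Tam}_n$.

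As an alternative cross-check, one could use bracket vectors and \Cref{cor:bracket_tam}. Under the move, the only pairs whose $v_i$ change are the pairs closed by the $k$ letters $d$. Each of these pairs gains the $|\mathfrak{D}|$ pairs of $\mathfrak{D}$ in its interior, so its entry increases, while every other entry is unchanged. This gives $v \leq v'$ componentwise directly.

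The main obstacle is bookkeeping. We must make sure that at each step the path immediately following the chosen $d$ is exactly a prime $D_j$, which is what \Cref{def:tam} requires, rather than a longer Dyck path. We must also confirm that the ordering of the moves never breaks the Dyck condition. The prime decomposition and the right-to-left order of moves described above are designed to handle both points. If the chain bookkeeping becomes messy, I would fall back on the bracket-vector comparison, which needs only \Cref{cor:bracket_tam} and \Cref{cor:bij}.
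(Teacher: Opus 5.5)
Your proposal is correct and follows the paper's proof. Both decompose $\mathfrak{D}$ into minimal-size Dyck paths $D_1\cdots D_m$, realize each Kreweras cover $Aud^k\mathfrak{D}B \lessdot Au\mathfrak{D}d^kB$ as a chain of Tamari covers $A''dD_jB'' \lessdot A''D_jdB''$, and conclude by transitivity. Your explicit order of moves, together with your checks of the Dyck condition and the nonempty prefix, fills in bookkeeping the paper leaves implicit. The bracket-vector comparison via \Cref{cor:bracket_tam} is a valid extra check that the paper does not use.
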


\begin{proof}
    We need to show that $w \leq w'$ in $\mathrm{Krew}_n$ implies $w \leq w'$ in $\mathrm{Tam}_n$, where $w$ and $w'$ are Dyck paths of size $n$. Using \Cref{def:krew}, consider $w = Aud^k\mathfrak{D}B \lessdot w' = Au\mathfrak{D}d^kB$, where $\mathfrak{D} = D$, the Dyck path of minimal size. For any such cover relations $w \leq w'$ in $\mathrm{Krew}_n$, there exists a chain $Aud^kDB \lessdot Aud^{k-1}DdB \lessdot Aud^{k-2}Dd^2B \lessdot \dots \lessdot AuDd^kB$ in $\mathrm{Tam}_n$ by \Cref{def:tam}. For $\mathfrak{D} \neq D$, where $D$ is as described previously, observe that $\mathfrak{D} = DE$, where $D$ is the Dyck path of minimal size with $w_1 = u$. In general, $\mathfrak{D}$ can be decomposed into at most $n$ Dyck paths $D$, i.e., $\mathfrak{D} = D_1D_2 \cdots D_k$ for $1 \leq k \leq n$, where each $D$ is of minimal size following the last letter $d$ of the preceding $D$, except the first as described earlier. Using the same argument with the chain in $\mathrm{Tam}_n$ applied to each $D$, it follows that $w \leq w'$ in $\mathrm{Krew}_n$ implies $w \leq w'$ in $\mathrm{Tam}_n$. We conclude that the Kreweras lattice $\mathrm{Krew}_n$ is a refinement of the Tamari lattice $\mathrm{Tam}_n$. 
\end{proof}

Our goal is to show that the Hasse diagram of $\mathrm{KTam}_n$ is the largest spanning subgraph of the Hasse diagrams of $\mathrm{Tam}_n$ and $\mathrm{Krew}_n$, which we proof using bracket vectors $v$ and the height function $h$. 

\begin{proposition}[Height of intervals in the Kreweras-Tamari poset]
\label{prop:h_intervals}
    For comparable elements $x$ and $y$ in the Kreweras-Tamari poset $\mathrm{KTam}_n$, let $v = (v_1,v_2, \dots, v_n)$ and $v' = (v_1',v_2', \dots, v_n')$ be bracket vectors of $x$ and $y$, respectively. Consider $k$ and $k'$ the numbers of $v_i \neq 0$ and $v_i' \neq 0$, respectively, for $1 \leq i \leq n$. The height of an interval $[x,y]$ in $\mathrm{KTam}_n$ is $h([x,y]) = k' - k$.
\end{proposition}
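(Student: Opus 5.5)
The plan is to show that every cover relation of $\mathrm{KTam}_n$ raises the number of nonzero entries of the bracket vector by exactly one. Once that is known, every chain from $x$ to $y$ made of cover relations has exactly $k'-k$ steps. Then every maximal chain of $[x,y]$ has length $k'-k$, and so $h([x,y]) = k'-k$.

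\textbf{Step 1: translate a cover relation into bracket vectors.} Take a cover relation $w = AudDB \lessdot w' = AuDdB$ of \Cref{def:ktam}, with $D$ the Dyck path of minimal size following the displayed $d$. The displayed $u$ and $d$ are adjacent, so they are paired under the bijection of \Cref{cor:bij}. Call this the $i$-th pair, ordered by the positions of the letters $u$. Since no pairs lie strictly between these two letters, $v_i = 0$. The path $D$ begins immediately after this $d$, so its first $u$ is the next letter $u$ in the word. Hence $D$ is the $(i+1)$-th pair, and $i+1 = i+v_i+1$, which matches the indexing in \Cref{lem:cover_bracket}.

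\textbf{Step 2: apply \Cref{lem:cover_bracket}.} Since $w \lessdot w'$ is a Tamari cover of the form $A'dDB \lessdot A'DdB$ with $A' = Au$, that lemma gives $v'_i = v_i + v_{i+1} + 1 = v_{i+1}+1 \geq 1$, and $v'_j = v_j$ for all $j \neq i$. So the $i$-th entry changes from zero to nonzero, and no other entry changes. The number of nonzero entries therefore increases by exactly $1$ along each cover relation of $\mathrm{KTam}_n$.

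\textbf{Step 3: conclude.} $\mathrm{KTam}_n$ is the reflexive transitive closure of these cover relations. So for $x \leq y$, every saturated chain $x = x_0 \lessdot x_1 \lessdot \cdots \lessdot x_m = y$ in $\mathrm{KTam}_n$ has $m = k'-k$, by telescoping the count of nonzero entries. In particular all maximal chains of $[x,y]$ have the same length $k'-k$, which gives $h([x,y]) = k'-k$.

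The main obstacle is the bookkeeping in Step 1. One must check that ordering pairs by the position of their $u$ really makes $D$ the $(i+1)$-th pair. One must also check that \Cref{lem:cover_bracket}'s update changes only coordinate $i$: in the bijection the other pairs keep their inner counts, since $D$ moves as a block and the only pair whose span grows is the $i$-th one. I would verify this directly from the bijection of \Cref{cor:bij} rather than rely on the statement of the lemma alone.
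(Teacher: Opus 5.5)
Your proposal is correct and follows the paper's own argument. You rewrite a cover $AudDB \lessdot AuDdB$ in bracket vectors via \Cref{lem:cover_bracket}, note that $v_i = 0$ because the displayed $u$ and $d$ are adjacent, so the changed coordinate becomes $v_{i+1}+1 \geq 1$ and the nonzero count rises by exactly one, and you telescope along chains; your extra checks (that $D$ is the $(i+1)$-th pair and no other coordinate changes) only make explicit what the paper leaves implicit.
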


\begin{proof}
    Consider cover relations $w = AudDB \lessdot w' = AuDdB$ in $\mathrm{KTam}_n$. Since $w \lessdot w'$ is also in $\mathrm{Tam}_n$, by \Cref{lem:cover_bracket}, we write $w \lessdot w'$ in terms of bracket vectors; explicitly, $v = (v_1, v_2, \dots, v_i, \dots, v_n) \lessdot v' = (v_1, v_2, \dots, v_i + v_{(i+v_i+1)} + 1, v_n)$. Recall from the proof of \Cref{lem:cover_bracket} that $v_i$ is the number of pairs of letters $u$ and $d$ between the $i$-th pair of letters $u$ and $d$ with $d$ as written in $AudDB$. In this particular instance, $d$ is paired with the letter $u$ preceding it as written in $AudDB$. So, $v_i = 0$. We have $v = (v_1, v_2, \dots, v_i = 0, \dots, v_n) \lessdot v' = (v_1, v_2, \dots, v_i' = v_{i+1} + 1, \dots, v_n)$. The difference $k' - k$ of the nonzero $v_i$ is $1$. Since all cover relations in $\mathrm{KTam}_n$ are of the form $AudDB \lessdot AuDdB$, by transitivity, for any interval $[x,y]$ in $\mathrm{KTam}_n$, $h([x,y]) = k' - k$. 
\end{proof}

\begin{corollary}[Kreweras-Tamari poset as spanning subgraph]
\label{cor:span}
    The Hasse diagram of $\mathrm{KTam}_n$ is the largest spanning (directed) subgraph of the Hasse diagrams of $\mathrm{Tam}_n$ and $\mathrm{Krew}_n$.
\end{corollary}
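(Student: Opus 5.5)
We aim to show two things: every cover relation of $\mathrm{KTam}_n$ is a cover relation of both $\mathrm{Tam}_n$ and $\mathrm{Krew}_n$, and conversely every cover relation common to the Hasse diagrams of $\mathrm{Tam}_n$ and $\mathrm{Krew}_n$ is of the form $AudDB \lessdot AuDdB$. Together these say that the Hasse diagram of $\mathrm{KTam}_n$ is exactly the edge-intersection of the two Hasse diagrams on the common vertex set of Dyck paths of size $n$. That intersection is automatically the largest spanning subgraph they share.

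\textbf{First inclusion.} This is direct from the definitions. Write $AudDB = A'dDB$ with $A' = Au$. \Cref{def:tam} then gives $A'dDB \lessdot A'DdB$ in $\mathrm{Tam}_n$, and taking $k=1$ and $\mathfrak{D}=D$ in \Cref{def:krew} gives $AudDB \lessdot AuDdB$ in $\mathrm{Krew}_n$. So each cover relation of $\mathrm{KTam}_n$ is a cover relation of both lattices.

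\textbf{Converse.} Take a common cover relation $w \lessdot w'$. Since it is a Tamari cover, \Cref{lem:cover_bracket} says it changes a single bracket-vector entry, $v_i \mapsto v_i + v_{i+v_i+1} + 1$. Since it is also a Kreweras cover, it has the shape $Aud^k\mathfrak{D}B \lessdot Au\mathfrak{D}d^kB$. By the chain exhibited in the proof of \Cref{prop:k_refines_t}, this Kreweras cover is the top of a Tamari chain of length at least $k \cdot (\text{number of prime factors of } \mathfrak{D})$.
\begin{itemize}
\item If $k\ge 2$, or if $\mathfrak{D}$ factors into two or more minimal Dyck paths, that chain has length at least $2$. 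Then $w < w'$ in $\mathrm{Tam}_n$ is not a cover, a contradiction.
\item Hence $k=1$ and $\mathfrak{D}=D$ is of minimal size, and the relation is $AudDB \lessdot AuDdB$, which is a cover of $\mathrm{KTam}_n$.
\end{itemize}
One could also argue through \Cref{prop:h_intervals}: on covers of $\mathrm{KTam}_n$ the number of nonzero bracket entries rises by exactly $1$, because $v_i=0$ becomes nonzero. This forces the $d$ to be immediately preceded by its paired $u$, which matches the $k=1$ shape.

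\textbf{Main obstacle.} The delicate step is the claim that a Kreweras cover with $k\ge2$ or with non-prime $\mathfrak{D}$ is never a Tamari cover. This needs the intermediate words in the chain from \Cref{prop:k_refines_t} to be genuine Dyck paths distinct from both endpoints. I would check it in bracket-vector form: each step of that chain strictly increases one coordinate, so the chain is strictly increasing under the order of \Cref{cor:bracket_tam}. With length at least $2$, the endpoints cannot form a cover. The intersection of edge sets is then the Hasse diagram of $\mathrm{KTam}_n$, which proves maximality.
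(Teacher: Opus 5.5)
Your proof is correct, and it takes a genuinely different route from the paper's. The paper treats the identification of the common cover relations as settled by the remark after \Cref{def:krew}, and it reads ``spanning'' as requiring connectivity. Its proof therefore only argues that the Hasse diagram of $\mathrm{KTam}_n$ is connected: by \Cref{prop:h_intervals}, each cover changes the number of nonzero bracket entries by exactly one, and it uses this to step up or down from any element.

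You instead prove the edge-set identity itself. The inclusion into both Hasse diagrams is read off from the definitions. For the reverse, you take a Kreweras cover $Aud^k\mathfrak{D}B \lessdot Au\mathfrak{D}d^kB$ with $\mathfrak{D}=D_1\cdots D_m$ and split it into the $km$ Tamari covers from the proof of \Cref{prop:k_refines_t}; being a Tamari cover then forces $k=m=1$. The intermediate words are Dyck paths because moving a $d$ to the right only raises the path, so your bracket-vector check is well posed.

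This supplies the ``largest'' half of the statement, which the paper takes for granted without proof. Under the usual meaning of a spanning subgraph (same vertex set, here all Dyck paths of size $n$), it is the whole statement. What your argument does not give is connectivity, which is the only thing the paper's proof addresses. If you adopt the paper's reading of ``spanning'', you would need to add that.

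Finally, drop the side remark via \Cref{prop:h_intervals}. As phrased, it uses a property of $\mathrm{KTam}_n$ covers to recognize a common cover as one, which is circular. The chain argument is what carries your proof.
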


\begin{proof}
    It suffices to prove that the Hasse diagram of $\mathrm{KTam}_n$ is connected. For an element $x$ in $\mathrm{KTam}_n$ with bracket vector $v$, there exists cover relations $v \lessdot v'$ and/or $v' \lessdot v$, where $v'$ has $1$ more or $1$ less nonzero $v_i'$ than $v$. By transitivity, there exists a path from an element to any other element in the Hasse diagram of $\mathrm{KTam}_n$. It follows that the Hasse diagram of $\mathrm{KTam}_n$ is connected.
\end{proof}

In summary, the Dyck-Tamari poset $\mathrm{DTam}_n$ and the Kreweras-Tamari poset $\mathrm{KTam}_n$ are the largest spanning subgraphs for their respective pairs of lattices. In this setting, we consider $\mathrm{KTam}_n$ a skeletal poset. Since $h(\mathrm{KTam}_n) = n < h(\mathrm{DTam}_n) = \frac{n(n-1)}{2}$ with more elements $x$ in $\mathrm{KTam}_n$ such that $h(x_\downarrow)$ is small compared to $h(x_\downarrow)$ of $x$ in the Dyck-Tamari poset, we also call the Kreweras-Tamari poset the \emph{wide skeletal poset} of the Tamari lattice. In turn, we call the Dyck-Tamari poset the \emph{long skeletal poset} of the Tamari lattice. 

\subsection{Intervals in the Tamari lattice}
\label{subsec:intervals_tam}

Besides computing $h$ in $\mathrm{DTam}_n$ and $\mathrm{KTam}_n$, we use bracket vectors and Dyck paths to count intervals in these posets, which are intervals in the Tamari lattice $\mathrm{Tam}_n$. We start with characterizing intervals in $\mathrm{Tam}_n$ that we separated into cases.

\begin{proposition}[Characterizing the intervals in the Tamari lattice, I]
\label{prop:simple_int}
    Let $[v = (v_1,v_2, \dots, v_n), \newline v' = (v_1', v_2', \dots, v_n')]$ be an interval in the Tamari lattice $\mathrm{Tam}_n$ with bracket vectors $v$ and $v'$. If \newline $v_1 = v_1' = k$ for $0 \leq k \leq n-1$, then $[v,v']$ can be written as $[uLdR, uL'dR']$, where $L, L', R$ and \newline $R'$ are Dyck paths, and $[L, L']$ and $[R, R']$ are intervals in $\mathrm{Tam}_k$ and $\mathrm{Tam}_{n-k-1}$, respectively.
\end{proposition}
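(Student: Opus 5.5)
Since $v_1 = k$, the bijection of \Cref{cor:bij} pairs the first letter $u$ of the path with the letter $d$ at index $2k+2$. The path of $v$ therefore factors as $uLdR$ with $L$ of size $k$ and $R$ of size $n-k-1$, exactly as in the recursive decomposition in the proof of \Cref{prop:cat}. The same holds for $v'$, giving $uL'dR'$. I would also record that the bracket vector splits compatibly: $v = (k, \beta(L), \beta(R))$ and $v' = (k, \beta(L'), \beta(R'))$. Here $\beta$ denotes the bracket vector. The $i$-th pair with $2 \le i \le k+1$ lies inside $L$, and its count of nested pairs is the same whether computed in $uLdR$ or in $L$. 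The pairs with $i \ge k+2$ lie in $R$, and likewise their counts are the same computed in the whole path or in $R$. This uses (ii) of \Cref{def:bracket}: pairs inside $L$ cannot reach past index $k+1$.

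Next I would apply \Cref{cor:bracket_tam}, which says that $v \le v'$ in $\mathrm{Tam}_n$ iff $v_i \le v_i'$ for all $i$. Using the splitting above, this is equivalent to $\beta(L) \le \beta(L')$ componentwise and $\beta(R) \le \beta(R')$ componentwise. Applying \Cref{cor:bracket_tam} again in sizes $k$ and $n-k-1$ gives $[L,L']$ as an interval in $\mathrm{Tam}_k$ and $[R,R']$ as an interval in $\mathrm{Tam}_{n-k-1}$. The empty path is allowed when $k=0$ or $k=n-1$.

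To make this a genuine correspondence of intervals, I would add that every $z$ in $[v,v']$ also has $z_1 = k$, since $v_1 \le z_1 \le v_1'$. So every such $z$ decomposes as $uL''dR''$ with $L'' \in [L,L']$ and $R'' \in [R,R']$. Conversely, any such pair reassembles into an element of the interval. This identifies $[v,v']$ with the product $[L,L'] \times [R,R']$.

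The only step needing care is the compatibility of the bracket vector with concatenation, namely that nested counts inside $L$ and $R$ are unchanged by the surrounding letters. I would check this directly from the construction in \Cref{cor:bij}: each nested count $v_i$ depends only on the subword strictly between the $i$-th paired $u$ and $d$.
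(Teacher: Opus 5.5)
Your proof is correct and follows the same route as the paper: both arguments split the bracket vectors at the first coordinate into $(v_2,\dots,v_{k+1})$ and $(v_{k+2},\dots,v_n)$ and use the componentwise order of \Cref{cor:bracket_tam} to obtain the intervals $[L,L']$ in $\mathrm{Tam}_k$ and $[R,R']$ in $\mathrm{Tam}_{n-k-1}$. Two points in your version are left implicit in the paper: the check that the nested counts are unchanged when the path is factored as $uLdR$, and the identification of $[v,v']$ with the product $[L,L'] \times [R,R']$.
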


\begin{figure}[htp]
    \centering
    \includegraphics[width=0.225\linewidth]{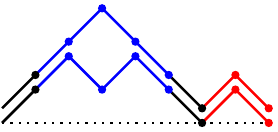}
    \caption{The interval $[(2, \textcolor{blue}{0}, \textcolor{blue}{0}, \textcolor{red}{0}), (2, \textcolor{blue}{1}, \textcolor{blue}{0}, \textcolor{red}{0})]$ in $\mathrm{Tam}_4$, and intervals $[(\textcolor{blue}{0}, \textcolor{blue}{0}), (\textcolor{blue}{1}, \textcolor{blue}{0})]$ and $[(\textcolor{red}{0}), (\textcolor{red}{0})]$ in $\textcolor{blue}{\mathrm{Tam}_2}$ and $\textcolor{red}{\mathrm{Tam}_1}$, respectively.}
    \label{fig:simple_int}
\end{figure}

\begin{proof}
    Consider an interval $[v = (v_1,v_2, \dots, v_n), v' = (v_1', v_2', \dots, v_n')]$ in the Tamari lattice $\mathrm{Tam}_n$ such that $v_1 = v_1' = k$. Recall from \Cref{cor:bij} that $v_i$ for $1 \leq i \leq n$ is the number of pairs of letters $u$ and $d$ between the $i$-th pair of letters $u$ and $d$ in Dyck paths $w$ of $v$ and $w'$ of $v'$. By \Cref{def:bracket}, $(v_2, v_3, \dots, v_{k+1})$, $(v_2', v_3', \dots, v_{k+1}')$, $(v_{k+2}, v_{k+3}, \dots, v_n)$, and $(v_{k+2}', v_{k+3}', \dots, v_n')$ are bracket vectors. Since $[v, v']$ is an interval in $\mathrm{Tam}_n$, $[(v_2, v_3, \dots, v_{k+1}), (v_2', v_3', \dots, v_{k+1}')]$ and $[(v_{k+2}, v_{k+3}, \dots, v_n), (v_{k+2}', v_{k+3}', \dots, v_n')]$ are intervals in $\mathrm{Tam}_k$ and $\mathrm{Tam}_{n-k-1}$, respectively. 
    
    $[(v_2, v_3, \dots, v_{k+1}), (v_2', v_3', \dots, v_{k+1}')]$ and $[(v_{k+2}, v_{k+3}, \dots, v_n), (v_{k+2}', v_{k+3}', \dots, v_n')]$ can be written as $[L, L']$ and $[R, R']$, respectively, where $L$ and $L$' are Dyck paths of size $k$, and $R$ and $R'$ are Dyck paths of size $n-k-1$. Thus, if $v_1 = v_1'$, then we can write $[v, v']$ as $[uLdR, uL'dR']$.
\end{proof}

\begin{proposition}[Characterizing the intervals in the Tamari lattice, II]
\label{prop:left_int}
    Let $[v = (v_1,v_2, \dots, v_n), \newline v' = (v_1', v_2', \dots, v_n')]$ be an interval in the Tamari lattice $\mathrm{Tam}_n$ with bracket vectors $v$ and $v'$. If $v_1 < v_1' = n-1$, then $[v, v']$ can be written as $[uLMdT, uL'M'd]$, where $L, L', M, M'$ and $W$ are Dyck paths, and $[L, L']$ and $[MT, M']$ are intervals in $\mathrm{Tam}_{k}$ and $\mathrm{Tam}_{n-k-1}$, respectively, for $0 \leq k \leq v_1$, the largest value $k$ such that $[L, L']$ is an interval.
\end{proposition}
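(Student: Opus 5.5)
We will argue as in \Cref{prop:simple_int}, now with the bracket vectors and Dyck paths of both endpoints. Since $v_1' = n-1$, the first pair of letters $u$ and $d$ in the Dyck path $w'$ of $v'$ encloses all other $n-1$ pairs. So $w' = uPd$ for a Dyck path $P$ of size $n-1$, whose bracket vector is $(v_2', \dots, v_n')$. For the Dyck path $w$ of $v$, write $w = uLdR$ as in \Cref{prop:cat}, where $L$ has size $v_1$ and bracket vector $(v_2, \dots, v_{v_1+1})$, and $R$ has bracket vector $(v_{v_1+2}, \dots, v_n)$. By \Cref{cor:bracket_tam}, $v \leq v'$ holds coordinatewise, so $(v_2, \dots, v_n) \leq (v_2', \dots, v_n')$ coordinatewise.

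The main step is to choose $k$ and split $P = L'M'$ and $R = MT$ (or $LdR$ suitably) so that the stated intervals appear. We take $k \leq v_1$ maximal such that the prefix of $P$ made of its first $k$ pairs is itself a Dyck path $L'$ of size $k$. Equivalently, $v_{i}' \leq k+1-i$ for $2 \leq i \leq k+1$. We also need $(v_2, \dots, v_{k+1})$ to be a bracket vector $L$ with $L \leq L'$ coordinatewise. The coordinatewise inequality follows from $v \leq v'$. The bracket-vector condition (ii) of \Cref{def:bracket} for the first $k$ coordinates of $v$ follows from $v_i \leq v_i' \leq k+1-i$. Hence $[L, L']$ is an interval in $\mathrm{Tam}_k$ by \Cref{cor:bracket_tam}, and this $k$ is the largest with this property.

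Then $M'$ is the remaining suffix of $P$, of size $n-k-1$, with bracket vector $(v_{k+2}', \dots, v_n')$. On the other side, the letters of $w$ after $uL$ form the word $MdT$ obtained from the pairs $k+2, \dots, n$: the pairs with index at most $v_1+1$ give $M$, and $T = R$. Its bracket vector is again $(v_{k+2}, \dots, v_n)$, read as the Dyck path $MT$ of size $n-k-1$. Coordinatewise comparison, via \Cref{cor:bracket_tam}, gives $[MT, M']$ in $\mathrm{Tam}_{n-k-1}$. This yields $w = uLMdT$ and $w' = uL'M'd$.

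The main obstacle is checking that the truncated vectors are still bracket vectors. This is condition (ii) of \Cref{def:bracket} at the cut index $k+1$, for both $v$ and $v'$; we handle it using the maximality of $k$ and $v_i \leq v_i'$. We also need that $L$ sits inside the first block of $w$, that is, $k \leq v_1$. We expect this to be the delicate bookkeeping, and would verify it with the pairing description from \Cref{cor:bij}.
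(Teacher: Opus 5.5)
Your proposal is correct and follows the paper's argument: cut both bracket vectors after index $k+1$ with $k \leq v_1$ maximal, then read off $[L,L']$ and $[MT,M']$ from the coordinatewise order of \Cref{cor:bracket_tam}, where $M$ comes from pairs $k+2,\dots,v_1+1$ and $T=R$. You are more careful than the paper about when the prefixes are bracket vectors: this holds only when $v_i' \leq k+1-i$, which is condition (i) of the truncation and comes from the definition of $k$ rather than its maximality, while the suffixes are always bracket vectors. Two small fixes remain: reserve $L$ for the size-$k$ prefix rather than the whole enclosed path, and replace the early phrase ``split $R = MT$'' by your later, correct identification that $LM$ is the path enclosed by the first pair of $w$ and $T = R$.
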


\begin{figure}[htp]
    \centering
    \includegraphics[width=0.225\linewidth]{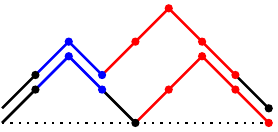}
    \caption{The interval $[(1, \textcolor{blue}{0}, \textcolor{red}{1}, \textcolor{red}{0}), (3, \textcolor{blue}{0}, \textcolor{red}{1}, \textcolor{red}{0})]$ in $\mathrm{Tam}_4$, and intervals $[\textcolor{blue}{0}, \textcolor{blue}{0}]$ and $[(\textcolor{red}{1}, \textcolor{red}{0}), (\textcolor{red}{1}, \textcolor{red}{0})]$ in $\textcolor{blue}{\mathrm{Tam}_1}$ and $\textcolor{red}{\mathrm{Tam}_2}$, respectively.}
    \label{fig:left_int}
\end{figure}

\begin{proof}
    Consider an interval $[v = (v_1, v_2, \dots, v_n), v' = (v_1', v_2', \dots, v_n')]$ in the Tamari lattice $\mathrm{Tam}_n$ such that $v_1 < v_1' = n-1$. Recall from \Cref{cor:bij} that $v_i$ for $1 \leq i \leq n$ counts the number of pairs of letters $u$ and $d$ between the $i$-th pair of letters $u$ and $d$ in Dyck paths $w$ of $v$ and $w'$ of $v'$. Let $k \leq v_1$. By \Cref{def:bracket}, $(v_2, v_3, \dots, v_{k+1})$, $(v_2', v_3', \dots, v_{k+1}')$, $(v_{k+2}, v_{k+3}, \dots, v_n)$, and $(v_{k+2}', v_{k+3}', \dots, v_n')$ are bracket vectors. We fix $k$ to be the largest value such that $[(v_2, v_3, \dots, v_{k+1}), \newline (v_2', v_3', \dots, v_{k+1}')]$ is an interval in $\mathrm{Tam}_{k}$. Since $[v, v']$ is an interval in $\mathrm{Tam}_n$, $[(v_{k+2}, v_{k+3}, \dots, v_n), \newline (v_{k+2}', v_{k+3}', \dots, v_n')]$ is an interval in $\mathrm{Tam}_{n-k-1}$. 

    $[(v_2, v_3, \dots, v_{k+1}), (v_2', v_3', \dots, v_{k+1}')]$ and $[(v_{k+2}, v_{k+3}, \dots, v_n), (v_{k+2}', v_{k+3}', \dots, v_n')]$ can be written as $[L, L']$ and $[MT, M']$, respectively, where $L$ and $L'$ are Dyck paths of size $k$, and $MT$ and $M'$ are Dyck paths of size $n-k-1$, where $M$ can be written as $(v_{k+2}, v_{k+3}, \dots, v_{v_1+1})$ and $T$ can be written as $(v_{v_1+2}, v_{v_1+3}, \dots, v_{v_1'+1 = n})$. Thus, if $v_1 < v_1' = n-1$, we can write $[v, v']$ as $[uLMdT, uL'M'd]$ (see \Cref{fig:all_int}).
\end{proof}

\begin{lemma}[Characterizing the intervals in the Tamari lattice, III]
\label{lem:all_int}
    Let $[v = (v_1,v_2, \dots, v_n), \newline v' = (v_1', v_2', \dots, v_n')]$ be an interval in the Tamari lattice $\mathrm{Tam}_n$ with bracket vectors $v$ and $v'$. Any interval $[v, v']$ can be written as $[uLMdTR, uL'M'dR']$, where $L, L', M, M', R, R'$ and $W$ are Dyck paths, and $[L, L']$, $[MT, M']$, and $[R, R']$ are intervals in $\mathrm{Tam}_{k}$, $\mathrm{Tam}_{k''}$, and $\mathrm{Tam}_{n-k-k''-1}$, respectively, for $0 \leq k \leq v_1$, the largest value $k$ such that $[L, L']$ is an interval, and $k'' = v_1' - k$.
\end{lemma}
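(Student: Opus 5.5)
The plan is to combine \Cref{prop:simple_int} and \Cref{prop:left_int} by cutting both Dyck paths at the point where the first pair of $v'$ closes. By \Cref{cor:bracket_tam}, $v \leq v'$ coordinatewise, so $v_1 \leq v_1'$. The first pair of letters $u$ and $d$ of $w'$ (the path of $v'$) then encloses $v_1'$ pairs, and $w'$ factors as $uPdR'$ with $P$ of size $v_1'$ and $R'$ of size $n - v_1' - 1$. The proof reduces to the two earlier propositions, as follows.

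\emph{Step 1 (splitting off the right part).} First I will show that coordinatewise comparison of bracket vectors forces the block of indices $\{v_1'+2,\dots,n\}$ to behave independently. The pairs of $w'$ with these indices lie outside the first pair. By \Cref{def:bracket}(ii) applied to $v_1 \leq v_1'$, every $v_i$ with $2 \leq i \leq v_1'+1$ satisfies $v_i \leq v_1' + 1 - i$, since $v_i \leq v_i'$. Hence the pairs of $w$ with indices $2,\dots,v_1'+1$ also close before the $(v_1'+2)$-th pair opens. So $w = uQR$ with $Q$ a path of size $v_1'+1$ starting with the first pair, and $R$ of size $n-v_1'-1$. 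The restriction of $v \leq v'$ to the last $n-v_1'-1$ coordinates then gives that $[R,R']$ is an interval of $\mathrm{Tam}_{n-v_1'-1}$; that these truncations are bracket vectors is exactly \Cref{def:bracket}, as in the proof of \Cref{prop:simple_int}.

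\emph{Step 2 (the left part).} On the first $v_1'+1$ coordinates I have an interval in $\mathrm{Tam}_{v_1'+1}$ whose top has first coordinate equal to its maximal value, namely size minus one. If $v_1 = v_1'$, this is \Cref{prop:simple_int} with empty $T$ and $M$ absorbed into $L$. Otherwise it is precisely the hypothesis of \Cref{prop:left_int}. That proposition gives the form $[uLMdT, uL'M'd]$, where $[L,L']$ is an interval in $\mathrm{Tam}_k$ with $k \leq v_1$ chosen largest, and $[MT,M']$ is an interval in $\mathrm{Tam}_{v_1'-k}$. Setting $k'' = v_1'-k$ and appending $R$ and $R'$ then yields $[uLMdTR, uL'M'dR']$ with the three stated intervals, whose sizes sum to $k + k'' + (n-k-k''-1) = n-1$.

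\emph{Main obstacle.} The delicate point is Step 1: justifying that the truncated vectors still compare in $\mathrm{Tam}$, that is, that restricting to a block of consecutive coordinates closed under the bracket condition gives an interval of the smaller Tamari lattice. I would handle this via \Cref{cor:bracket_tam}. The order is coordinatewise, and the block $\{v_1'+2,\dots,n\}$ is closed for both $v$ and $v'$, since no pair in it reaches outside, so the restricted inequalities are just a subset of the original ones. The bookkeeping of which of the indices $2,\dots,v_1+1$ go to $L$ versus $M$ is inherited verbatim from \Cref{prop:left_int}, so no new argument is needed there.
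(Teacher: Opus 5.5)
Your proposal is correct and takes the same route as the paper, reducing the lemma to \Cref{prop:simple_int} and \Cref{prop:left_int}. Your Step 1 cuts both paths where the first pair of $w'$ closes, using $v_i \leq v_i' \leq v_1'+1-i$; this makes explicit the splitting needed when $[MT, M']$ and $[R, R']$ are both nonempty, a case the paper's one-line proof leaves implicit (also, $w = uQR$ should read $w = QR$, since $Q$ already contains the first pair).
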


\begin{figure}[htp]
    \centering
    \includegraphics[width=0.35\linewidth]{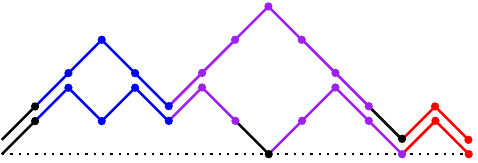}
    \caption{The interval $[(3, \textcolor{blue}{0}, \textcolor{blue}{0}, \textcolor{purple}{0}, \textcolor{purple}{1}, \textcolor{purple}{0}, \textcolor{red}{0}), (5, \textcolor{blue}{1}, \textcolor{blue}{0}, \textcolor{purple}{2}, \textcolor{purple}{1}, \textcolor{purple}{0}, \textcolor{red}{0})]$ in $\mathrm{Tam}_7$, and intervals $[(\textcolor{blue}{0}, \textcolor{blue}{0}), (\textcolor{blue}{1}, \textcolor{blue}{0})]$, $[(\textcolor{purple}{0}, \textcolor{purple}{1}, \textcolor{purple}{0}), (\textcolor{purple}{2}, \textcolor{purple}{1}, \textcolor{purple}{0})]$ and $[(\textcolor{red}{0}), (\textcolor{red}{0})]$ in $\textcolor{blue}{\mathrm{Tam}_2}$, $\textcolor{purple}{\mathrm{Tam}_3}$, and $\textcolor{red}{\mathrm{Tam}_1}$, respectively.}
    \label{fig:all_int}
\end{figure}

\begin{proof}
    The lemma follows from \Cref{prop:simple_int} and \Cref{prop:left_int}. It reduces to \Cref{prop:simple_int} if $[MT, M']$ is the empty interval, and to \Cref{prop:left_int} if $[R, R']$ is the empty interval. 
\end{proof}

Using \Cref{lem:all_int}, we can characterize intervals in the Dyck-Tamari poset $\mathrm{DTam}_n$ and in the Kreweras-Tamari poset $\mathrm{KTam}_n$. Then, the descriptions of these intervals allow us to count them; see \Cref{subsec:intervals_dtam} for intervals in $\mathrm{DTam}_n$, which are counted by the number of ternary trees (\href{https://oeis.org/A001764}{A001764}, OEIS~\cite{OEIS}), and \Cref{subsec:intervals_ktam} for intervals in $\mathrm{KTam}_n$, which are counted by the number of hex trees (\href{https://oeis.org/A002212}{A002212}, OEIS~\cite{OEIS}), or ternary trees with restrictions on the outdegrees of vertices.

\subsection{Counting intervals in the Dyck-Tamari poset}
\label{subsec:intervals_dtam}

We begin with the characterization of the intervals in $\mathrm{DTam}_n$ using \Cref{lem:all_int}. We then study intervals of the form $[uMdT, uM'd]$, i.e., $[uLMdTR, uL'M'dR']$, where Dyck paths $L, L', R$, and $R'$ are empty, and enumerate them. From there on, it becomes a straightforward exercise to count the number of intervals in $\mathrm{DTam}_n$. We recommend that the reader see \Cref{fig:cat_bij} and the figures in this subsection often. 

\begin{lemma}[Characterizing the intervals in the Dyck-Tamari poset]
\label{lem:dtam_int}
    Consider an interval $[uLMdTR, uL'M'dR']$, as described in \Cref{lem:all_int}, in the Dyck-Tamari poset $\mathrm{DTam}_n$; then $T = udud \cdots ud$, i.e., $T \neq u \cdots uu \cdots d$.
\end{lemma}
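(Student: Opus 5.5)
I will work with the cover relations of $\mathrm{DTam}_n$ from \Cref{def:dtam}, written in bracket vectors as in the proof of \Cref{thm:dtam}: a cover $AdudB \lessdot AuddB$ is $v \lessdot v'$ where $v'$ is $v$ with one coordinate raised by $1$, namely $v_i' = v_i + 1$, and this is allowed only when $v_{i+v_i+1} = 0$. On Dyck paths, the single peak $ud$ immediately after the $d$ closing the $i$-th pair is absorbed into that pair, and every other pair is unchanged. The plan is to follow what happens to the first pair (the leading $u$ and its matching $d$) along a saturated chain from $uLMdTR$ to $uL'M'dR'$ in $\mathrm{DTam}_n$. The point is that $T$ is precisely the material that passes inside the first pair during this chain.

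\textbf{Step 1.} A cover raising $v_1$ acts on a word $u X d\, ud\, Y$ and produces $u X ud\, d\, Y$. So the first pair can only grow by swallowing, one at a time, a factor $ud$ that sits immediately to the right of its closing $d$. Covers raising $v_j$ with $j \ge 2$ do not change $v_1$, since they modify nothing outside the $j$-th pair.

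\textbf{Step 2.} Any cover that changes the part of the word to the right of the first pair's closing $d$ does so by an operation of the same form. It merges an adjacent peak $ud$ into a preceding pair, which can only nest material and never separates it into a factor that is not a peak. Moreover, once a peak has been swallowed by the first pair, it never leaves it, because covers only increase coordinates (\Cref{cor:bracket_tam}).

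\textbf{Step 3.} In the decomposition of \Cref{lem:all_int}, $T$ is by construction the segment that lies to the right of the first pair in the lower element but inside it in the upper element. By Step 1, each piece that crosses into the first pair does so as a single $ud$ appended directly after the closing $d$. Reading the crossings in order, $T$ is therefore a concatenation $ud\,ud \cdots ud$.

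\textbf{Step 4 (main obstacle).} Step 3 has to rule out the following scenario. A nested factor such as $uudd$ lies in $T$, and the chain first flattens it, which would mean decreasing coordinates and is impossible, or first lets the first pair swallow a peak that only later becomes nested, which happens inside $M'$ and not in $T$. I would handle this by induction on the length of the chain, proving the invariant that the prefix of the current word's tail that will eventually be absorbed consists only of peaks. The key fact for the induction is that a coordinate of a pair lying in the tail cannot decrease. Hence if $T$ contained a pair with a nonzero coordinate, that pair would have to enter the first pair with the nonzero coordinate, contradicting Step 1. This forces every $v_j = 0$ for the indices $j$ of $T$, which is exactly $T = udud\cdots ud$.
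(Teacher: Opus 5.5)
Your proof is correct. It starts from the same observation as the paper: a $\mathrm{DTam}_n$ cover only absorbs a bare peak $ud$ sitting immediately after a closing $d$. From there it argues differently, and more completely.

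The paper's proof only inspects single covers. It exhibits the cover with $T=ud$ and then asserts $uLMd\,uudd\,R\not\leq uL'M\,uudd\,d\,R'$. It never explains why a longer chain could not first alter the tail and only afterwards absorb it into the first pair.

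You close exactly that gap with monotonicity of coordinates:
\begin{itemize}
\item Since $|T|=v_1'-v_1$ in the decomposition of \Cref{lem:all_int}, the pairs of $T$ are those with indices $v_1+2,\dots,v_1'+1$ in the lower element.
\item Along any saturated chain, the first coordinate takes every value $c$ with $v_1\le c<v_1'$, and the step $c\to c+1$ requires coordinate $c+2$ to be $0$ at that moment.
\item Each cover raises a single coordinate by one, so coordinates never decrease. Hence $v_{c+2}=0$ already holds in the lower element, which is exactly $T=(ud)^{v_1'-v_1}$.
\end{itemize}

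This short argument is all you need. Step~2 and the induction invariant in Step~4 can be dropped. In Step~1, the reason $v_1$ is unaffected by the other covers is simply that, in bracket-vector form, such a cover changes only its own coordinate. It is not true that it ``modifies nothing outside the $j$-th pair'': it pulls a peak in from outside.

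What your route buys is a genuine proof; the paper's is essentially a check of the cases $T=ud$ and $T=uudd$. It also generalizes verbatim to any index $i$: if $v\le v'$ in $\mathrm{DTam}_n$, then $v_j=0$ for all $i+v_i+1\le j\le i+v_i'$.
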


\begin{figure}[htp]
    \centering
    \captionsetup{width = 0.905\linewidth}
    \includegraphics[width=0.35\linewidth]{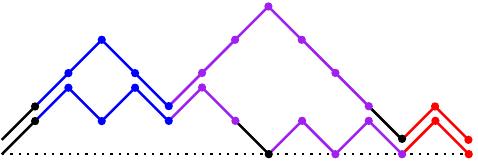}
    \caption{The interval $[(3, \textcolor{blue}{0}, \textcolor{blue}{0}, \textcolor{purple}{0}, \textcolor{purple}{0}, \textcolor{purple}{0}, \textcolor{red}{0}), (5, \textcolor{blue}{1}, \textcolor{blue}{0}, \textcolor{purple}{2}, \textcolor{purple}{1}, \textcolor{purple}{0}, \textcolor{red}{0})]$ in $\mathrm{DTam}_7$, and intervals $[(\textcolor{blue}{0}, \textcolor{blue}{0}), (\textcolor{blue}{1}, \textcolor{blue}{0})]$, $[(\textcolor{purple}{0}, \textcolor{purple}{0}, \textcolor{purple}{0}), (\textcolor{purple}{2}, \textcolor{purple}{1}, \textcolor{purple}{0})]$ and $[(\textcolor{red}{0}), (\textcolor{red}{0})]$ in $\textcolor{blue}{\mathrm{DTam}_2}$, $\textcolor{purple}{\mathrm{DTam}_3}$, and $\textcolor{red}{\mathrm{DTam}_1}$, respectively.}
    \label{fig:dtam_int}
\end{figure}

\begin{proof}
    By \Cref{lem:all_int}, we can write any interval $[w, w']$ in $\mathrm{DTam}_n$ as $[uLMdTR, uL'M'dR']$. Recall from \Cref{def:dtam} that cover relations in $\mathrm{DTam}_n$ are of the form $AdudB \lessdot AuddB$. Observe that there exists cover relations $uLMdTR = uLMdudR \lessdot uL'M'dR' = uLMTdR = uLMuddR$. So, $T \neq u \cdots duu \cdots d$. Similarly, $T \neq uu \cdots d$ since $uLMdTR = uLMduuddR \not\leq uL'MTdR' = uL'MuudddR'$. We conclude that $T = udud \cdots ud$, i.e., $T \neq u \cdots uu \cdots d$.  
\end{proof}

\begin{corollary}[Comparability of elements in the Dyck-Tamari poset, I]
\label{cor:comp}
    Let $[w, w']$ be an interval in $\mathrm{DTam}_n$. Either $w \neq u \cdots duu \cdots d$ or if there exists any $d_{k}u_{k+1}u_{k+2}$ for $2 \leq k \leq 2n-4$ where $w = u \cdots d_ku_{k+1}u_{k+2} \cdots d$, then $w' = u \cdots d_{k}u_{k+1}u_{k+2} \cdots d$; equivalently, $d_ku_{k+1}u_{k+2}$ is fixed.
\end{corollary}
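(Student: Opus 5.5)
The plan is to show that no cover relation of $\mathrm{DTam}_n$ can destroy a factor $d_ku_{k+1}u_{k+2}$ of a Dyck path. Once that holds, any chain $w = w_0 \lessdot w_1 \lessdot \dots \lessdot w_m = w'$ in $\mathrm{DTam}_n$ keeps the factor in place, by induction along the chain. The statement follows, since either $w$ has no factor $duu$ at all (the first alternative) or every such factor persists to $w'$.

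\textbf{Step 1 (local analysis of a cover).} By \Cref{def:dtam}, every cover relation has the form $AdudB \lessdot AuddB$. Relative to the positions of the distinguished $d$, $u$, $d$ letters (say indices $j, j+1, j+2$), the only letters that change are $w_j w_{j+1} = du \mapsto ud$. Suppose $w$ contains $d_ku_{k+1}u_{k+2}$. I will check whether the changed window $\{j,j+1\}$ can meet $\{k,k+1,k+2\}$.

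\begin{itemize}
\item If $j=k$, then $w_{j+1}=u_{k+1}$, and we need $w_{j+2}=d$. But $w_{k+2}=u$, a contradiction.
\item If $j+1=k$, then $w_{j+1}=u$, whereas $w_k=d$, a contradiction.
\item If $j=k+1$ or $j=k+2$, then $w_j=u$, not $d$, a contradiction.
\item If $j+1=k+1$, then $w_j=w_k=d$ and $w_{j+1}=u_{k+1}$, and the cover requires $w_{j+2}=d$. Again $w_{k+2}=u$, a contradiction.
\item If $j+1=k+2$, then $w_j=w_{k+1}=u$, not $d$, a contradiction.
\end{itemize}

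Hence the window $\{j,j+1\}$ is disjoint from $\{k,k+1,k+2\}$. The cover is therefore a letter swap at indices away from the factor, so $w'$ has $d_ku_{k+1}u_{k+2}$ at the same indices. The index range $2\le k\le 2n-4$ is automatic for a Dyck path, since the factor cannot start at position $1$ and must be followed by enough letters $d$ to return to height $0$.

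\textbf{Step 2 (transitivity).} $\mathrm{DTam}_n$ is the reflexive transitive closure of these covers, so $w\le w'$ means there is a chain of covers from $w$ to $w'$. Applying Step 1 to each cover in turn shows that each factor $duu$ of $w$ appears at the same indices in $w'$. This is exactly the claim that $d_ku_{k+1}u_{k+2}$ is fixed. It is consistent with \Cref{lem:dtam_int}, which says that the part $T$ cannot contain $uu$ after a $d$ unless it is frozen.

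\textbf{Main obstacle.} The only delicate point is the overlap case $j=k$ (equivalently $j+1=k+1$), where the $d$ of the cover coincides with the $d$ of the factor. The argument depends on the cover needing the letter after $du$ to be $d$, while the factor supplies $u$. I will state this explicitly, so the reader does not confuse the pattern $AdudB$ with a general Dyck-lattice swap $AduB$. A general swap $AduB\lessdot AudB$ would indeed break the factor, and that is exactly why the corollary fails for $\mathrm{Dyck}_n$.
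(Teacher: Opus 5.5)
Your proof is correct, but it takes a different route from the paper.

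The paper gives no separate argument for this corollary. It reads it off from \Cref{lem:dtam_int}, which works inside the decomposition $[uLMdTR, uL'M'dR']$ of \Cref{lem:all_int}. There it argues, from the form of the moves $AdudB \lessdot AuddB$, that the part $T$ swallowed by $uL'M'd$ must be $udud\cdots ud$.

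You instead show directly that each factor $d_ku_{k+1}u_{k+2}$ is invariant under every generating move. A move changes only positions $j,j+1$. It requires $w_j=d$, $w_{j+1}=u$ and $w_{j+2}=d$. These requirements are incompatible with each of $j\in\{k-1,k,k+1,k+2\}$, which are exactly the cases where $\{j,j+1\}$ meets $\{k,k+1,k+2\}$. Your case list is redundant but covers all four. Induction along a chain of moves then finishes the proof.

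Your route has several advantages:
\begin{itemize}
\item It uses only \Cref{def:dtam}.
\item It treats every factor $duu$ of $w$ uniformly, rather than only the one sitting in $T$ for a particular decomposition.
\item It does not rely on the decomposition lemmas \Cref{lem:all_int} and \Cref{lem:dtam_int}, whose proofs are only sketched.
\item It shows why the statement fails in $\mathrm{Dyck}_n$: there the unconstrained swap $AduB \lessdot AudB$ can act at $j=k$.
\end{itemize}

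In exchange, the paper's route states the fact in the same decomposition that is reused for the bijection in \Cref{lem:dtam_hard} and the recursion of \Cref{thm:count_dtam}. The corollary therefore arrives already in the form needed for the enumeration.
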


\begin{figure}[htp]
    \centering
    \includegraphics[width=0.22\linewidth]{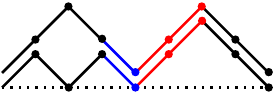}
    \caption{The interval $[u_1d_2u_3\textcolor{blue}{d_4}\textcolor{red}{u_5u_6}d_7d_8, u_1u_2d_3\textcolor{blue}{d_4}\textcolor{red}{u_5u_6}d_7d_8]$ in $\mathrm{DTam}_4$ where $\textcolor{blue}{d_4}\textcolor{red}{u_5u_6}$ is fixed. In terms of bracket vectors, the interval is $[(0, 0, \textcolor{red}{1}, 0), (1, 0, \textcolor{red}{1}, 0)]$.}
    \label{fig:fixed}
\end{figure}

\Cref{cor:comp} is a comparability criterion for elements in $\mathrm{DTam}_n$. Since it is stated in terms of Dyck paths, we restate this criterion in terms of bracket vectors. Both versions are necessary for us to enumerate the intervals in $\mathrm{DTam}_n$.

\begin{corollary}[Comparability of elements in the Dyck-Tamari poset, II]
\label{cor:comp_bracket}
    Let $[v = (v_1, v_2, \dots, v_n), \newline v' =(v_1', v_2', \dots, v_n')]$ be an interval in $\mathrm{DTam}_n$. Consider $v_i \geq 1$ for $2 \leq i \leq n-1$, and $v_{i-1} = 0$. Either such a $v_i$ is not in $v$ or if there exists such a $v_i$, then $v_{i-1} = 0$ and $v_i = v_i'$. 
\end{corollary}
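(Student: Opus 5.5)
The plan is to translate \Cref{cor:comp} into the language of bracket vectors using the bijection of \Cref{cor:bij}, together with the description of the cover relations of $\mathrm{DTam}_n$ obtained in the proof of \Cref{thm:dtam}. There, a cover relation $AdudB \lessdot AuddB$ was shown to correspond to $v \lessdot v'$, where $v'$ is obtained from $v$ by replacing some $v_i$ with $v_i+1$, under the condition $v_{i+v_i+1}=0$. Since every relation in $\mathrm{DTam}_n$ is a composite of such covers, it suffices to show two things. First, a factor $d_ku_{k+1}u_{k+2}$ in the Dyck path corresponds to an index $i$ with $v_{i-1}=0$ and $v_i\ge 1$. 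Second, no cover relation of $\mathrm{DTam}_n$ can change such a $v_i$.

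For the dictionary, I will use the pairing of \Cref{cor:bij}. The letters $u_{k+1}u_{k+2}$ are the openings of the $i$-th and $(i+1)$-th pairs, where $i$ is the index of the pair opened by $u_{k+1}$. Because $u_{k+2}$ lies inside that pair, we get $v_i\ge 1$. The letter $d_k$ immediately precedes $u_{k+1}$. The paper's formulation pairs this with $v_{i-1}=0$, i.e., the previous pair is of the form $ud$ closing at $d_k$. If instead $d_k$ closes a larger Dyck factor, I would reduce to the case recorded in the statement by taking $i-1$ to be the last pair closing before $u_{k+1}$, which is $ud$ exactly when $u_{k-1}d_k$ occurs. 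The ranges $2\le i\le n-1$ come from $2\le k\le 2n-4$, since there must be a pair before $u_{k+1}$ and a pair after it.

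Next I will check the invariance directly on covers. Suppose a cover $v\lessdot v'$ increments some coordinate $v_j$ with $v_{j+v_j+1}=0$. If $j=i$, the incremented pair $i$ would swallow the following $ud$ factor. But in the Dyck path the letter right after the closing $d$ of pair $i$ would then have to be $u$ followed by $d$. Meanwhile, pair $i$ in $w=AdudB$ must be the letter $d$ directly before $ud$, which forces the opening $u_{k+1}$ of pair $i$ to be preceded by $A$. This is compatible with the situation, so instead I will argue via \Cref{cor:comp}: the Dyck-path factor $d_ku_{k+1}u_{k+2}$ is fixed along $[w,w']$. The pair structure around it, namely $v_{i-1}=0$ and the span of pair $i$, is therefore preserved, which gives $v_i=v_i'$ and $v_{i-1}=v_{i-1}'=0$.

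The main obstacle is this last step. It requires confirming that the fixed factor in \Cref{cor:comp} really pins down the value of the $i$-th bracket coordinate and not merely the three letters. I would handle it by noting that, by the bijection, $v_i$ equals half the length of the Dyck factor enclosed by pair $i$. Then any cover $AdudB\lessdot AuddB$ that enlarges pair $i$ must move its closing $d$ past a $ud$. That move can only be an increase of $v_i$, which by \Cref{lem:dtam_int} (the condition $T=udud\cdots ud$) is excluded when $u_{k+1}$ is immediately preceded by $d_k$ as in the statement.
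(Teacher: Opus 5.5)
Your route is the paper's own: push \Cref{cor:comp} through the bijection of \Cref{cor:bij}. It breaks exactly at the step you flag as the main obstacle. A fixed factor $d_ku_{k+1}u_{k+2}$ does not force $v_i=v_i'$; in fact the equality is false, so nothing can close this gap. Take $w=u_1d_2u_3u_4d_5d_6u_7d_8$, with $v=(0,1,0,0)$, and the cover $AdudB\lessdot AuddB$ of $\mathrm{DTam}_4$ with $A=u_1d_2u_3u_4d_5$, $d=d_6$, $ud=u_7d_8$ and $B$ empty. It gives $w'=u_1d_2u_3u_4d_5u_6d_7d_8$ with $v'=(0,2,0,0)$. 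Here $i=2$, $v_1=0$, $v_2=1$, and the factor $d_2u_3u_4$ is untouched, yet $v_2'=2$. The interval $[(3,0,1,0,0),(4,0,2,1,0)]$ of $\mathrm{DTam}_5$ displayed in \Cref{fig:duu} is another instance, with $i=3$. This is precisely your case $j=i$, which you correctly observed is ``compatible with the situation'' and then set aside. \Cref{lem:dtam_int} cannot exclude it. That lemma only constrains the block $T$ absorbed by the \emph{first} pair in the decomposition of \Cref{lem:all_int}. It says nothing that would stop an inner pair $i$ from swallowing a $ud$ sitting immediately after its closing letter.

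What the translation genuinely yields is weaker. A cover $dud\to udd$ at positions $p,p+1,p+2$ changes the number of $u$'s only in the prefix of length exactly $p$. Moreover $p\neq k$, since the current path still has $u$ at position $k+2$ (\Cref{cor:comp}). So along any chain in $[w,w']$ the letter $u_{k+1}$ keeps opening the $i$-th pair. Hence $v'_{i-1}=0$, $v'_i\geq 1$ and $v_i\leq v'_i$. The fixed factor pins the \emph{opening} of pair $i$, not its closing letter, which moves right whenever it is immediately followed by $ud$. The paper's proof makes the same unjustified leap from ``$d_ku_{k+1}u_{k+2}$ is fixed'' to ``$v_i=v_i'$''. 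The statement itself therefore has to be weakened (for instance to the persistence of $v'_{i-1}=0$, $v'_i\geq 1$) rather than proved as written.

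A minor point: your detour about $d_k$ closing a larger Dyck factor is unnecessary. For $i\geq 2$, the $(i-1)$-th $u$ is the last $u$ before position $k+1$ and is followed only by $d$'s up to $d_k$. So it is closed immediately, and $v_{i-1}=0$ holds automatically.
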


\begin{proof}
    It suffices to use the bijection from \Cref{cor:bij}. Recall from \Cref{cor:comp} the condition that $d_{k}u_{k+1}u_{k+2}$ in $w$ for $2 \leq k \leq 2n-4$. We simply need to state this condition in terms of bracket vectors. So, let us write $w$ as a bracket vector $v$. Recall that each $v_i$ for $1 \leq i \leq n$ in $v$ counts the number of pairs of letters $u$ and $d$ between the $i$-th pair letters $u$ and $d$ in $w$. If $d_{k}u_{k+1}u_{k+2}$ is in $w$, then the $v_i$ corresponding to $u_{k+1}$ satisfies $v_i \geq 1$, and $v_{i-1} = 0$ by \Cref{def:bracket}. So, if $v_i \geq 1$, then $v_i = v_i'$ (see \Cref{fig:fixed}). 
\end{proof}

With \Cref{cor:comp} and \Cref{cor:comp_bracket}, we can map intervals $[w, w']$ in $\mathrm{DTam}_{n-1}$ to and from intervals $[uMdT, uM'd]$ in $\mathrm{DTam}_n$ as follows.

\begin{corollary}[Mapping intervals in the Dyck-Tamari poset, I]
\label{cor:dtam_map_int}
    Consider the intervals $[w, w']$ in $\mathrm{DTam}_{n-1}$. $[wud, uw'd]$ or $[uMdT, uM'd]$ are intervals in $\mathrm{DTam}_{n}$ if and only if $w \neq u \cdots duu \cdots d$. Conversely, consider the intervals $[uMdT, uM'd]$ or $[(v_1, v_2, \dots, v_{n-1}, 0), (n-1, v_1', v_2', \dots, v'_{n-1})]$ in $\mathrm{DTam}_n$. $[(v_1, v_2, \dots, v_{n-1}), (v_1', v_2', \dots, v_{n-1})]$ are intervals in $\mathrm{DTam}_{n-1}$ if and only if $v_i \leq v_i'$ for $1 \leq i \leq n-1$ and there does not exist $v_j \geq 1$ for $2 \leq j \leq n-2$ such that $v_j = v_j'$. 
\end{corollary}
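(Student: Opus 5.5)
The plan is to read the statement through \Cref{lem:all_int} in the special case where $L, L', R, R'$ are empty. In that case an interval has the shape $[uMdT, uM'd]$: the lower element has $v_1 = |M| = $ some value and the upper element has $v_1' = n-1$. In bracket vectors this is $[(v_1, v_2, \dots, v_{n-1}, 0), (n-1, v_1', \dots, v_{n-1}')]$ after reindexing. Throughout I use the description of $\mathrm{DTam}_n$ from \Cref{thm:dtam}: every cover relation $AdudB \lessdot AuddB$ increases exactly one bracket coordinate $v_i$ by $1$, namely the one whose successor block $v_{i+v_i+1}$ is $0$. So comparability in $\mathrm{DTam}_n$ means reachability by such unit increments, each of which must stay inside the bracket vectors at every step.

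For the forward direction, I start from an interval $[w,w']$ in $\mathrm{DTam}_{n-1}$ and build the lower path $wud$ and the upper path $uw'd$. The first step is to exhibit a chain in $\mathrm{DTam}_n$ from $wud$ up to $uw'd$. I first move $w \to w'$ inside the prefix, which is possible because cover relations of $\mathrm{DTam}_{n-1}$ lift verbatim when a suffix $ud$ is appended. I then repeatedly apply $AdudB \lessdot AuddB$ to push the final $ud$ leftward, each time absorbing one block into a new outer $u\cdots d$ wrapper.

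The main obstacle is that this absorption only works by the minimal step $dud \to udd$. It succeeds exactly when the blocks being absorbed can be taken one size-one Dyck path at a time, which is the content of \Cref{lem:dtam_int}: $T$ must be $udud\cdots ud$. Pattern $duu$ in $w$ creates a fixed coordinate, by \Cref{cor:comp} and \Cref{cor:comp_bracket}. Wrapping the whole path in $u\cdots d$ changes that coordinate's predecessor, which conflicts with it being fixed. This yields the condition $w \neq u\cdots duu\cdots d$. For the necessity half, I would argue that if $w$ contains $d_ku_{k+1}u_{k+2}$, then \Cref{cor:comp} forces this factor to persist in every element above $wud$. But $uw'd$ has shifted indices, and its coordinate at that position differs: the $v_1$ becomes $n-1$ and all others shift. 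So no chain exists.

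For the converse, I translate back with \Cref{cor:bij}. Given $[(v_1,\dots,v_{n-1},0),(n-1,v_1',\dots,v_{n-1}')]$ in $\mathrm{DTam}_n$, I strip the trailing $0$ and the leading $n-1$. Coordinatewise $v_i \le v_i'$ is necessary by \Cref{cor:bracket_tam}, since $\mathrm{DTam}_{n-1}$ refines $\mathrm{Tam}_{n-1}$. The extra condition, that no $v_j \ge 1$ with $v_j = v_j'$ exists, is exactly the bracket form of the excluded $duu$ pattern via \Cref{cor:comp_bracket}. With that condition in hand, the chain constructed in the forward direction can be reversed, which gives the equivalence. I expect the delicate bookkeeping to be the index shift in the bracket vectors between the two sizes, and I would check it on \Cref{fig:dtam_int} first.
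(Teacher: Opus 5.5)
\textbf{Genuine gap: the chain for sufficiency runs in the wrong order.} You go $wud \le w'ud$ first and then push the final $ud$ through $w'$. By \Cref{cor:comp}, applied in $\mathrm{DTam}_n$, any factor $d_ku_{k+1}u_{k+2}$ of $w'ud$ survives in every element above it. But position $k+1$ of $uw'd$ carries the letter $w'_k = d$. So your second step needs $w'$, not $w$, to avoid $duu$.

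$w'$ can contain $duu$ even when $w$ does not. Take $w = ududud \lessdot w' = uduudd$ in $\mathrm{DTam}_3$. Then $w'ud = uduuddud$ is not below $uw'd = uuduuddd$, yet $wud \le uw'd$ does hold, via $(0,0,0,0) \lessdot (1,0,0,0) \lessdot (2,0,0,0) \lessdot (3,0,0,0) \lessdot (3,0,1,0)$.

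The repair is to absorb first and lift second: $wud \le uwd \le uw'd$.
\begin{itemize}
\item The second inequality is just covers of $\mathrm{DTam}_{n-1}$ wrapped in $u\,\cdot\,d$, i.e.\ $A \mapsto uA$ and $B \mapsto Bd$.
\item The first inequality is where the hypothesis on $w$ enters, and it needs an explicit construction rather than an appeal to \Cref{lem:dtam_int}. The path $w$ avoids $duu$ exactly when its bracket vector is $(v_1 > \cdots > v_{a-1} \ge 1, 0, \dots, 0)$. Raise coordinate $1$ to $n-1$, then coordinate $2$ to $v_1$, coordinate $3$ to $v_2$, and so on. Each unit step absorbs a zero pair that is the next sibling inside the already enlarged parent; the last step for coordinate $j$ stays inside pair $j-1$ because $v_{j-1} < v_{j-2}$.
\end{itemize}

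Your necessity argument also omits the key input. You must use $w \le w'$ in $\mathrm{DTam}_{n-1}$ to conclude, again by \Cref{cor:comp}, that $w'$ contains $d_ku_{k+1}u_{k+2}$. Then position $k+1$ of $uw'd$ is a $d$, which contradicts persistence of the factor from $wud$. Saying that indices shift proves nothing by itself, since $uw'd$ does contain $duu$ at position $k$ whenever $w'$ has one at position $k-1$.

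\textbf{The converse.} You rely on \Cref{cor:comp_bracket}, but what is preserved upward is the predecessor condition $v_{j-1} = 0 < v_j$, not the value $v_j$. For example, $uduud\,dud \lessdot uduud\,udd$, i.e.\ $(0,1,0,0) \lessdot (0,2,0,0)$ in $\mathrm{DTam}_4$, changes $v_2$.

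Consequently, the stated condition (no $v_j \ge 1$ with $v_j = v_j'$ for $2 \le j \le n-2$) is not the bracket form of ``$w$ avoids $duu$''. That bracket form would be ``no $j \ge 2$ with $v_{j-1} = 0 < v_j$''. Read literally, the converse is false. Take $n = 4$ and $v = v' = (2,1,0)$:
\begin{itemize}
\item $[(2,1,0,0),(3,2,1,0)]$ is an interval of $\mathrm{DTam}_4$, via $(2,1,0,0) \lessdot (3,1,0,0) \lessdot (3,2,0,0) \lessdot (3,2,1,0)$;
\item $[v,v']$ is a singleton interval of $\mathrm{DTam}_3$;
\item yet $v_2 = v_2' = 1$, so the stated condition fails.
\end{itemize}
So no argument can establish that half as written.

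``Reversing the forward chain'' would not be an argument in any case. The ``if'' direction requires producing a chain from $w$ to $w'$ inside $\mathrm{DTam}_{n-1}$ out of a chain in $\mathrm{DTam}_n$, and your forward construction takes $w \le w'$ as input, so it supplies no such chain.

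The paper gives no separate proof beyond pointing to \Cref{cor:comp} and \Cref{cor:comp_bracket}. The part that can actually be proved is the forward half, by the corrected chain above together with the repaired necessity argument.
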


\begin{figure}[htp]
    \centering
    \includegraphics[width=0.5\linewidth]{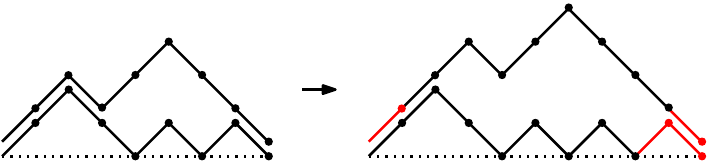}
    \caption{The interval $[(1, 0, 0, 0), (3, 0, 1, 0)]$ in $\mathrm{DTam}_4$ mapped to the interval $[(1, 0, 0, 0, \textcolor{red}{0}), (\textcolor{red}{4}, 3, 0, 1, 0)]$ in $\mathrm{DTam}_5$.}
    \label{fig:map_1}
\end{figure}

Explicitly, for intervals in $[uLMdT, uL'M'dR']$ in $\mathrm{DTam}_{n-1}$, we map these intervals to $[uLMdTud, \newline uuL'M'dR'd]$ that we rewrite as $[uMdT, uM'd]$. The intervals $[uMdT, uM'd]$ are in $\mathrm{DTam}_n$ given that they satisfy the condition in \Cref{cor:dtam_map_int}. We are left with intervals $[w, w']$ in $\mathrm{DTam}_{n-1}$ where $w = u \cdots d_ku_{k+1}u_{k+2} \cdots d$ for some $d_ku_{k+1}u_{k+2}$ where $2 \leq k \leq 2(n-1)-4$ and intervals $[uMdT, uM'd] = [wud, uw'd]$ described in the following corollary.

\begin{corollary}[Mapping intervals in the Dyck-Tamari poset, II]
\label{cor:dtam_map_int_2}
    Let $[(v_1, v_2, \dots, v_{n-1}, 0), (n-1, v_1', v_2', \dots, v'_{n-1})]$ be intervals in $\mathrm{DTam}_n$. If $v_1 \neq 0$ and there exists $v_{i+1} = v_i'$ for $1 \leq i \leq v_1$, then $(v_1, v_2, \dots, v_{n-1}) \not\leq (v_1', v_2', \dots, v_{n-1}')$.
\end{corollary}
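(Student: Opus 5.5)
The plan is to reduce everything to the comparability criterion of \Cref{cor:comp_bracket} and then exploit the shift of indices built into \Cref{cor:dtam_map_int}. I read the interval $[(v_1, \dots, v_{n-1}, 0), (n-1, v_1', \dots, v_{n-1}')]$ in $\mathrm{DTam}_n$ as the image $[wud, uw'd]$ of a candidate pair $w, w'$ of size $n-1$. Here $w$ has bracket vector $(v_1, \dots, v_{n-1})$ and $w'$ has bracket vector $(v_1', \dots, v_{n-1}')$. The point is that the primed vector sits one position to the right inside the larger vector. So the entry of the upper element of the big interval at position $i+1$ is $v_i'$, while the entry of the lower element at position $i+1$ is $v_{i+1}$.

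The argument proceeds by contradiction. Assume $v_1 \neq 0$, that $v_{i+1} = v_i'$ for some $1 \leq i \leq v_1$, and that $(v_1, \dots, v_{n-1}) \leq (v_1', \dots, v_{n-1}')$ in $\mathrm{DTam}_{n-1}$.

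\begin{itemize}
\item First, since $\mathrm{DTam}_{n-1}$ refines $\mathrm{Tam}_{n-1}$, \Cref{cor:bracket_tam} gives $v_j \leq v_j'$ componentwise. In particular $v_1' \geq v_1 \geq 1$, so the block of $w'$ opened by its first $u$ contains the positions $2, \dots, v_1+1$.
\item Second, I use that cover relations $AdudB \lessdot AuddB$ raise exactly one coordinate by one, namely the one whose successor coordinate is $0$ (the computation in the proof of \Cref{thm:dtam}). A chain from $v$ to $v'$ then raises $v_1$ to $v_1'$ one step at a time. Each such step requires the coordinate at position $v_1+2$ (at the current value of $v_1$) to be $0$, which absorbs the next pair into the first block.
\item Third, I apply this observation to the index $i$ with $v_{i+1} = v_i'$. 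The chain ordering $v_i \leq \dots \leq v_i'$, together with the bracket condition (ii) of \Cref{def:bracket} applied to $v_1 \geq i$, gives $v_{i+1} \leq v_1 - i$. The aim is to show that the equality $v_{i+1} = v_i'$ forces a configuration $d\,u\,u$ that must stay fixed, in the sense of \Cref{cor:comp}, contradicting the absorption required in the second step.
\end{itemize}

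To finish, I translate this back to the big interval. In $\mathrm{DTam}_n$, position $i+1$ has lower value $v_{i+1}$ and upper value $v_i'$, and these agree. If $v_{i+1} \geq 1$, this is exactly a fixed coordinate as in \Cref{cor:comp_bracket}. The converse part of \Cref{cor:dtam_map_int} forbids such a $v_j = v_j'$ with $v_j \geq 1$ whenever the smaller interval is to exist. This yields $(v_1, \dots, v_{n-1}) \not\leq (v_1', \dots, v_{n-1}')$. If instead $v_{i+1} = 0$, then $v_i' = 0$, so $v_i = 0$ as well. For $i = 1$ this contradicts $v_1 \neq 0$. For $i \geq 2$ with $i \leq v_1$, the bracket condition on $w'$ with $v_1' \geq v_1$ still allows zeros, so this case needs separate handling. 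I would compare $v_i'=0$ against the requirement that moving from $v$ to $v'$ in the first block never decreases the entries at positions $2, \dots, v_1+1$.

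The main obstacle is the case $v_{i+1} = 0$, and more generally making the index shift between the two levels rigorous. \Cref{cor:dtam_map_int} is stated loosely, so I would first restate precisely which coordinates of the $\mathrm{DTam}_n$ interval correspond to which coordinates of $w$ and $w'$. I would then check the degenerate zero case directly on Dyck paths, using \Cref{cor:comp}, rather than on bracket vectors.
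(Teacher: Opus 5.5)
The paper states this corollary without proof; it gives only the example in the figure, which is an instance with $i=1$. So the question is whether your argument closes. It does not. The case you flag as the main obstacle, $v_{i+1}=0$ with $i\ge 2$, cannot be closed by any argument, because the statement is false there. Take $n=4$ and $w=w'=uududd$, i.e.\ $v=v'=(2,0,0)$. Then $[wud,uw'd]=[(2,0,0,0),(3,2,0,0)]$ is an interval in $\mathrm{DTam}_4$:
\[
uududdud \lessdot uudududd \lessdot uuuddudd \lessdot uuududdd.
\]
Each step has the form $AdudB\lessdot AuddB$; in bracket vectors the chain is $(2,0,0,0)\lessdot(3,0,0,0)\lessdot(3,1,0,0)\lessdot(3,2,0,0)$. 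Here $v_1=2\neq 0$, and $i=2\le v_1$ gives $v_3=0=v_2'$. Yet $v=v'$, so $v\le v'$ under any reading of $\le$ (componentwise, in $\mathrm{Tam}_3$, or in $\mathrm{DTam}_3$). Comparing $v_i'=0$ against monotonicity in the first block therefore cannot produce a contradiction. The statement needs an extra hypothesis.

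What is true follows from axiom (ii) of \Cref{def:bracket} alone, with no chains in $\mathrm{DTam}$:
\begin{itemize}
\item if $i=1$, then $v_1'=v_2\le v_1-1$;
\item if $v_{i+1}\ge 1$, then $v_i'\ge 1$ gives $v_{i+1}'\le v_i'-1=v_{i+1}-1$ (note $i+1\le v_1+1\le n-1$);
\item if $v_i\ge 1$, then $v_i'=v_{i+1}\le v_i-1$.
\end{itemize}
In each case some coordinate of $v$ exceeds the corresponding coordinate of $v'$. The only remaining configuration, $i\ge 2$ with $v_i=v_{i+1}=v_i'=0$, is exactly where the counterexample lives.

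Two further points in your write-up would need repair even in the true cases. First, your appeal to the converse part of \Cref{cor:dtam_map_int} for $v_{i+1}\ge 1$ is misindexed. That criterion concerns coincidences $v_j=v_j'$ at the same position of the small vectors, whereas your hypothesis is the shifted coincidence $v_{i+1}=v_i'$. Second, your third step, that $v_{i+1}=v_i'$ ``forces a configuration $d\,u\,u$ that must stay fixed'', states a goal rather than giving an argument. The one-line inequalities above replace both.
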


\begin{figure}[htp]
    \centering
    \captionsetup{width = 0.65\linewidth}
    \includegraphics[width=0.225\linewidth]{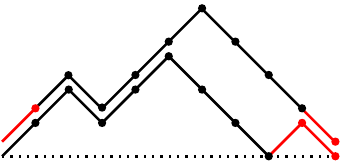}
    \caption{The interval $[(3, 0, 1, 0, \textcolor{red}{0}), (\textcolor{red}{4}, 0, 2, 1, 0)]$ in $\mathrm{DTam}_5$. Observe that $(3, 0, 1, 0) \not\leq (0, 2, 1, 0)$.}
    \label{fig:duu}
\end{figure}

Observe that intervals $[(v_1, v_2, \dots, v_{n-1}, 0), (n-1, v_1', v_2', \dots, v'_{n-1})]$ includes those of the form $[uLMdT, uLM'd]$ where the interval $[L, L']$ is nonempty. Specifically, if $v_2 = v_1'$ in \Cref{cor:dtam_map_int_2}, then we have the interval $[(v_2, v_3, \dots, v_{v_1}), (v_1', v_2', \dots, v_{v_1})]$ in $\mathrm{DTam}_{v_1}$. We only consider intervals $[uMdT, uM'd]$ in $\mathrm{DTam}_n$. So, for $i$ in \Cref{cor:dtam_map_int_2}, we ignore the case $i = 1$. Consider $i$, the smallest index such that $v_{i} = 0$ for $3 \leq i \leq v_1 + 1$ and $v_1 \geq 2$; then for any $v_j$ such that $j < i$, $v_j > 0$. So, we can write $[uMdT, uM'd]$ as $u^k = u_1u_2 \cdots u_k$ for $2 \leq k \leq n-2$ and $[D_{\max} , D'_{\max}]$ is an interval in $\mathrm{DTam}_{m}$ for $1 \leq m \leq v_2$ (see the rightmost interval in \Cref{fig:dtam_bij}. We use the subscript $\max$ to indicate that we take Dyck paths of maximal size following $u^k$ in $[u^kD_{\max} E, u^kD'_{\max} E']$ such that $[D_{\max}, D'_{\max}]$ is an interval in $\mathrm{DTam}_m$. 

With this in mind, consider Dyck paths $w = CduuE$. Let $d$ be the last letter of the Dyck path $D_{\max}$ of maximal size as written in $w$. Observe that $w = C'D_{max}uuE$, where $Cd = C'D_{\max}$. Note the similarity between $w = C'D_{max}uuE$ and $u^kD_{\max}E$. Using this observation, we count intervals $[uMdT, uM'd]$ in $\mathrm{DTam}_n$; informally, by adding/removing a pair of letters $u$ and $d$, and swapping $D_{\max}uu$ or $uuD_{\max}$ with the other (see \Cref{fig:dtam_bij}). Then, we count all intervals in $\mathrm{DTam}_n$.

\begin{lemma}[Counting intervals in the Dyck-Tamari poset, I]
\label{lem:dtam_hard}
    Let $|\mathrm{DTam}_n|$ be the number of \newline intervals $[w, w']$, or $[v, v']$, in the Dyck-Tamari poset $\mathrm{DTam}_n$. The number of intervals $[uMdT, uM'd]$ in $\mathrm{DTam}_n$ is $|\mathrm{DTam}_{n-1}|$.
\end{lemma}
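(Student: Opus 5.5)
We construct an explicit bijection $\Phi$ from the set of all intervals $[w,w']$ in $\mathrm{DTam}_{n-1}$ to the set of intervals of the form $[uMdT, uM'd]$ in $\mathrm{DTam}_n$. Following the discussion after \Cref{cor:dtam_map_int}, we split the intervals of $\mathrm{DTam}_{n-1}$ into two classes according to \Cref{cor:comp}.

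The first class consists of intervals whose bottom element $w$ contains no factor $duu$. The second class consists of intervals with $w = C\,d\,uu\,E$; in these the factor $duu$ is fixed along the whole interval, so $w' = C'\,d\,uu\,E$ as well. On the target side, \Cref{cor:dtam_map_int} and \Cref{cor:dtam_map_int_2} split the intervals $[(v_1,\dots,v_{n-1},0),(n-1,v'_1,\dots,v'_{n-1})]$ into two classes. The first is those for which deleting the top coordinate and the trailing $0$ gives an interval of $\mathrm{DTam}_{n-1}$. The second is those with $v_{i+1} = v'_i$ for some $i \ge 2$, which is the case $u^kD_{\max}E$ described just before the lemma.

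\textbf{Step 1.} On the first class, define $\Phi([w,w']) = [wud, uw'd]$. By \Cref{cor:dtam_map_int} this is an interval in $\mathrm{DTam}_n$ exactly when $w$ has no $duu$. Conversely, stripping the added pair gives an interval in $\mathrm{DTam}_{n-1}$ exactly when no coordinate with $v_j \ge 1$ is fixed. So $\Phi$ is a bijection between the two "first classes". Injectivity is clear because $w$ and $w'$ are recovered by deleting the final $ud$ of the bottom element and the outer $u\cdots d$ of the top element.

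\textbf{Step 2.} On the second class, write $w = C'D_{\max}uuE$, where $D_{\max}$ is the maximal Dyck factor ending at the fixed $d$; similarly $w' = C''D'_{\max}uuE$. Define $\Phi$ by swapping $D_{\max}uu$ with $uuD_{\max}$ and then adding one outer pair $u\cdots d$, as in \Cref{fig:dtam_bij}. This produces an interval of the form $[u^kD_{\max}E\cdots, u^kD'_{\max}\cdots d]$, in which the prefix $u^k$ of the bottom element witnesses the equality $v_{i+1}=v'_i$ of \Cref{cor:dtam_map_int_2}.

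We must then check three things. First, the image endpoints are comparable in $\mathrm{DTam}_n$; we will argue this via bracket vectors using \Cref{cor:bracket_tam} together with \Cref{cor:comp_bracket}. Second, the image has the shape $[uMdT, uM'd]$ with $T = udud\cdots ud$, as required by \Cref{lem:dtam_int}. Third, the inverse map is well defined: locate the smallest index $i \ge 3$ with $v_i = 0$, read off $k$ and $D_{\max}$, swap back, and remove the outer pair. The two classes have disjoint images, since only the second produces a fixed coordinate $v_{i+1}=v'_i$ with $i\ge2$. Hence $\Phi$ is a bijection, and the count is $|\mathrm{DTam}_{n-1}|$.

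The main obstacle is Step 2. We must verify that the swap $D_{\max}uu \leftrightarrow uuD_{\max}$ carries $\mathrm{DTam}$-comparability to $\mathrm{DTam}$-comparability in both directions, and that the maximality of $D_{\max}$ makes the inverse unambiguous. For comparability, the first attempt will be a chain-level argument: we will show that every cover $AdudB \lessdot AuddB$ occurring inside $D_{\max}$ or inside $E$ is transported to a cover of the same type after the swap. Covers elsewhere are unaffected, because $duu$ is fixed by \Cref{cor:comp}.
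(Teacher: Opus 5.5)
Your plan is the paper's own argument. You split the intervals of $\mathrm{DTam}_{n-1}$ by whether the bottom contains a factor $duu$, send the first class to $[wud,uw'd]$ via \Cref{cor:dtam_map_int}, and treat the second class with the swap $D_{\max}uu\leftrightarrow uuD_{\max}$ of \Cref{fig:dtam_bij}. The gap is that the Step~2 map does not land in $\mathrm{DTam}_n$ when $w$ contains more than one factor $duu$.

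Your target shape $[u^kD_{\max}E\cdots,u^kD'_{\max}\cdots d]$ needs the prefix before $D_{\max}$ to be $u^{k-2}$. That holds only if you use the \emph{first} $duu$, so any further $duu$ lies inside $E$. By \Cref{cor:comp} it also sits in $E'$ at the same positions. In the image, the bottom $u^kD_{\max}E\,ud$ keeps $E$ in place, but the top $u^kD'_{\max}\,u\,E'\,d$ shifts $E'$ one step to the right. So the top puts the $d$ of that factor exactly where the bottom has a $u$ that \Cref{cor:comp} freezes.

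Concretely, take $n=6$ and the singleton $[w,w]$ with $w=uduuduuddd\in\mathrm{DTam}_5$. Here $k=2$, $D_{\max}=ud$, $E=duuddd$, and the image is $[uuudduudddud,\ uuududuudddd]$. Its bottom contains the fixed factor $d_5u_6u_7$, while the top has $u_5$, so the two endpoints are incomparable. Hence $\Phi$ is not even a map into the target set from $n=6$ on, and no chain-level argument can repair it.

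The paper's written proof uses exactly this correspondence, so it has the same defect. The counts still agree in the small cases I could check by other means, but what is missing is a different correspondence on the second class (or a counting argument), not just more verification.

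Two smaller points. First, even when $E$ has no $duu$, transporting the covers inside $D_{\max}$ and inside $E$ only takes the bottom to $u^kD'_{\max}E'ud$, not to $u^kD'_{\max}uE'd$. You still need further covers that carry the appended $ud$ leftward through $E'$, and your sketch does not account for them; this is precisely where the frozen-$duu$ obstruction appears. Second, your disjointness criterion is wrong as stated. The first-class image $[(0,0,0,0),(3,0,0,0)]$ of $[(0,0,0),(0,0,0)]$ already has $v_{i+1}=v'_i$ with $i=2$. The correct separation is whether deleting the added pair returns an interval of $\mathrm{DTam}_{n-1}$.
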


\begin{figure}[htp]
    \centering
    \includegraphics[width=0.8\linewidth]{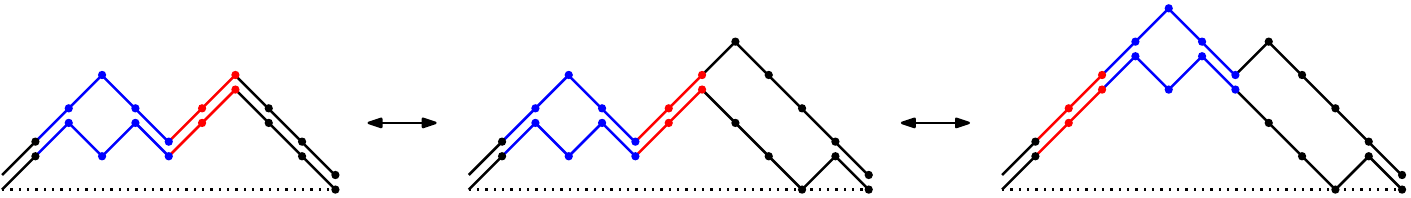}
    \caption{The bijection in the proof of \Cref{lem:dtam_hard}.}
    \label{fig:dtam_bij}
\end{figure}

\begin{proof} 
    Before moving forward, we recommend that the reader keep in mind what was previously said right after \Cref{cor:dtam_map_int} and after \Cref{cor:dtam_map_int_2}. 

    By \Cref{cor:dtam_map_int}, it suffices to consider intervals $[w, w'] = [u \dots duuE, u \dots duuE']$ in $\mathrm{DTam}_{n-1}$ and intervals $[uMdT, uM'd]$, where $u^k = u_1u_2 \cdots u_k$ for $2 \leq k \leq n-2$ and $[D_{\max}, D'_{\max}]$ is the largest interval in $\mathrm{DTam}_{m}$ for $1 \leq m \leq v_2$ following $u^k$. Recall that the intervals $[uMdT, uM'd]$ here are those such that $uMdT'  \not\leq M'$ where $T'ud = T$.

    Observe that intervals $[w, w']$ can be written as $[CD_{\max}uuE, C'D_{\max}'uuE']$, where $uu$ in $[w, w']$ have the same indices and $[D_{\max}, D_{\max}]$ is the largest interval, i.e., the sizes of the Dyck paths $D_{\max}$ and $D'_{\max}$ are maximal, preceding $uu$. Since $D_{\max}$ and $D'_{\max}$ are maximal, $C = C' = u^{k-2} = u_1u_2 \cdots u_{k-2}$ for $2 \leq k \leq n-3$. It follows that $[u^{k-2}D_{\max}uuE, u^{k-2}D'_{\max}uuE']$. We want to map intervals $[u^{k-2}D_{\max}uuE, u^{k-2}D'_{\max}uuE']$ in $\mathrm{DTam}_{n-1}$ to $[uMdT, uM'd]$ in $\mathrm{DTam}_n$. So, consider the following bijections:
    \begin{align*}
        [u^{k-2}D_{\max}uuE, u^{k-2}D'_{\max}uuE'] &\leftrightarrow [u^{k-2}D_{\max}uuEud, u^{k-2}D_{\max}uuuE'd] \\
        &\leftrightarrow [u^kD_{\max}Eud, u^{k}D_{\max}uE'd].
    \end{align*}
    We have $[u^kD_{\max}Eud, u^{k}D_{\max}uE'd] = [uMdT, uM'd]$ in $\mathrm{DTam}_n$ as desired. For the backward direction, it suffices to show that $[uMdT, uM'd]$ can be written as $[u^kD_{\max}Eud, u^{k}D_{\max}uE'd]$. Recall that $T = udud \cdots ud$. So, we can write $uMdT$ as $u^kD_{\max}Eud$. Note that $uM'd \neq u^nd^n = u_1u_2 \cdots u_nd_{n+1}d_{n+2} \cdots d_{2n}$ since $u^nd^n \geq uMdT'$ for any Dyck path $uMdT'$ of size $n-1$. We write $uM'd$ as $u^{k}D_{\max}uE'd$. So, $[uMdT, uM'd] = [u^kD_{\max}Eud, u^{k}D_{\max}uE'd]$ as desired. We conclude that there are $|\mathrm{DTam}_{n-1}|$ intervals $[uMdT, uM'd]$. 
\end{proof}

\begin{theorem}[Counting intervals in the Dyck-Tamari poset, II]
\label{thm:count_dtam}
    Let $|\mathrm{DTam}_n|$ be the number of intervals $[w, w']$, or $[v, v']$, in the Dyck-Tamari poset $\mathrm{DTam}_n$. The number of intervals in $\mathrm{DTam}_n$ is the number of ternary trees (\href{https://oeis.org/A001764}{A001764}, OEIS~\cite{OEIS}), i.e.,   
    \begin{align*}
        |\mathrm{DTam}_0| = 1, |\mathrm{DTam}_n| = \sum_{i,j=0}^{n-1} |\mathrm{DTam}_i||\mathrm{DTam}_j||\mathrm{DTam}_{n-i-j-1}|.
    \end{align*}
\end{theorem}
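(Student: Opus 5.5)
The plan is to decompose an arbitrary interval of $\mathrm{DTam}_n$ into three independent pieces and read off the ternary recursion. By \Cref{lem:all_int}, every interval $[w,w']$ in $\mathrm{DTam}_n$ (which is in particular an interval of $\mathrm{Tam}_n$) can be written as $[uLMdTR, uL'M'dR']$. Here $[L,L']$ sits in size $k$, $[MT,M']$ sits in size $k''=v_1'-k$, and $[R,R']$ sits in size $n-k-k''-1$. Put $i=k$ and $j=n-k-k''-1$. The aim is to show that this decomposition induces a bijection
\[
\{\text{intervals of } \mathrm{DTam}_n\} \longleftrightarrow \bigsqcup_{i+j\le n-1} \mathcal{I}(\mathrm{DTam}_i)\times \mathcal{I}(\mathrm{DTam}_j)\times \mathcal{J}_{n-i-j},
\]
where $\mathcal{I}$ denotes the set of intervals and $\mathcal{J}_m$ is the set of intervals of the form $[uMdT,uM'd]$ in $\mathrm{DTam}_m$, with the convention that $M,M',T$ are empty when $m=1$. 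Once this is in place, \Cref{lem:dtam_hard} gives $|\mathcal{J}_{m}|=|\mathrm{DTam}_{m-1}|$, and substituting yields exactly $\sum_{i,j}|\mathrm{DTam}_i||\mathrm{DTam}_j||\mathrm{DTam}_{n-i-j-1}|$. The base case $|\mathrm{DTam}_0|=1$ is the single empty interval.

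The key steps, in order, are as follows.

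\textbf{Step 1.} Show that the pieces are intervals of the right skeletal posets: $[L,L']$ lies in $\mathrm{DTam}_i$ and $[R,R']$ lies in $\mathrm{DTam}_j$, not merely in the Tamari lattices. Cover relations $AdudB\lessdot AuddB$ of $\mathrm{DTam}_n$ act locally, on a factor $dud$. Along a saturated chain from $w$ to $w'$, each such move lies entirely inside the $L$-region, the $R$-region, or the middle region $uMdT\to uM'd$. No move can straddle a boundary: this is where \Cref{lem:dtam_int} and \Cref{cor:comp} are used, since the factors $d\,u\,u$ at the boundaries stay fixed and the middle block only grows its outer $d$ past unit blocks $ud$ of $T$. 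Projecting the chain therefore gives chains in $\mathrm{DTam}_i$, in $\mathrm{DTam}_j$, and a middle interval in $\mathcal{J}_{n-i-j}$.

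\textbf{Step 2.} Prove the converse. Given intervals $[L,L']$, $[R,R']$ in the smaller skeletal posets and a middle interval $[uMdT,uM'd]$, concatenate chains: first perform the $L$-moves, then the middle moves, then the $R$-moves. Each local move $dud\to udd$ remains a valid $\mathrm{DTam}_n$ cover relation after concatenation, because the prefix $A$ stays nonempty and the surrounding letters are unchanged.

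\textbf{Step 3.} Check that the decomposition is unique. The paper's choice of $k$ as the largest value for which $[L,L']$ is an interval pins down the split, and $v_1'$ pins down where $R$ starts. Hence the map from intervals to triples is injective, and Steps 1–2 give surjectivity.

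The main obstacle is Step 1, specifically showing that the left and middle regions do not interact. The condition "$k$ is largest such that $[L,L']$ is an interval" in \Cref{lem:all_int} is phrased in Tamari terms, and I must verify two things. First, no $\mathrm{DTam}$ move transfers a block from $M$ into $L$. Second, splitting with the maximal $k$ is consistent with the middle piece landing in $\mathcal{J}$ as counted in \Cref{lem:dtam_hard}, rather than some larger class. I would first attempt this with bracket vectors. A $\mathrm{DTam}$ cover increases a single coordinate $v_i$ by one when $v_{i+v_i+1}=0$, so I would track how the coordinates $v_2,\dots,v_{k+1}$ and $v_{k+2},\dots$ evolve separately, using \Cref{cor:comp_bracket} to rule out coordinates crossing the boundary at index $k+1$.
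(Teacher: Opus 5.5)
Your skeleton is the paper's proof. It too writes an interval as $[uLMdTR,uL'M'dR']$ via \Cref{lem:all_int}, counts the middle piece by \Cref{lem:dtam_hard}, and multiplies; it takes $j$ to be the size of $MT$ rather than of $R$, which gives the same sum. The paper never checks that the pieces are $\mathrm{DTam}$ intervals or that the decomposition is bijective, so your Steps 1--3 are the right additions. The problem is the mechanism you give for Step 1: block boundaries are not in general $duu$ factors.

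Take $w=uududdud$, $v=(2,0,0,0)$, and $w'=uuduuddd$, $v'=(3,0,1,0)$. This is an interval of $\mathrm{DTam}_4$ through $uudududd$. Its maximal $k$ is $1$, so $L=M=T=ud$, and the boundary between $L$ and $M$ in $w$ is the factor $d_3u_4d_5$. Indeed $w\lessdot uuudddud$ is a cover across that boundary; it simply is not below $w'$. So the decoupling is forced by the top element, not by a local feature of $w$, and \Cref{cor:comp} cannot supply it. Moreover, \Cref{cor:comp_bracket}, which you plan to use, is false as stated. The cover $uduuddud\lessdot uduududd$, i.e.\ $(0,1,0,0)\lessdot(0,2,0,0)$, has $v_1=0$ and $v_2=1$ but $v_2'=2$. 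A frozen factor $duu$ fixes where pair $i$ opens, not $v_i$.

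The repair is monotonicity. Each $\mathrm{DTam}_n$ cover raises exactly one bracket coordinate by one, so all coordinates weakly increase along any chain from $v$ to $v'$. The cut inequalities $v'_i\le k+1-i$ for $2\le i\le k+1$ (by the choice of $k$) and $v'_i\le v'_1+1-i$ for $i\le v'_1+1$ hold at $v'$, hence at every element of the chain. Therefore every cover $dud\to udd$ on the chain moves a closing $d$ past a unit pair $ud$ of its own block. Covers in different blocks commute, and each restricts to a cover of the smaller poset.

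You must also define $\mathcal{J}_m$ as the intervals with $v'_1=m-1$ \emph{and} maximal $k=0$, not all intervals whose top has the form $uM'd$. For $m=3$ there are $7$ of the latter but only $3=|\mathrm{DTam}_2|$ of the former, and only with the former is the decomposition unique. Step 3 then needs the check that gluing $[L,L']$ onto such a middle piece gives maximal $k$ exactly $|L|$. This holds because a larger common cut would restrict to a nonzero cut of the middle piece.

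Finally, given this decomposition, the ternary recursion is equivalent to $|\mathcal{J}_m|=|\mathrm{DTam}_{m-1}|$. So, as in the paper, all the enumerative content rests on \Cref{lem:dtam_hard}.
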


\begin{proof}
    The empty interval is unique, i.e., $|\mathrm{DTam}_0| = 1$. Consider intervals $[uLMdTR, uL'M'dR']$ in $\mathrm{DTam}_n$. Let $L$ and $L'$ be Dyck paths of length $i$, and $MT$ and $M'$ be Dyck paths of length $j$, for $0 \leq i + j \leq n-1$. So, $R$ and $R'$ are Dyck paths of length $n-i-j-1$. We have $|\mathrm{DTam}_i|$ intervals $[L, L']$, $|\mathrm{DTam}_j|$ intervals $[uMdT, uM'd]$ by \Cref{lem:dtam_hard}, and $|\mathrm{DTam}_{n-i-j-1}|$ intervals $[R, R']$ for any fixed $i$ and $j$. So, the equality $|\mathrm{DTam}_n| = \sum_{i,j=0}^{n-1} |\mathrm{DTam}_i||\mathrm{DTam}_j||\mathrm{DTam}_{n-i-j-1}|$ holds as desired.
\end{proof}

\subsection{Counting intervals in the Kreweras-Tamari poset}
\label{subsec:intervals_ktam}

Using \Cref{lem:all_int}, we characterize the intervals in $\mathrm{KTam}_n$, which then allow us to count these intervals directly. We highlight here that intervals in $\mathrm{KTam}_n$ have some similarities with intervals in $\mathrm{DTam}_n$; notably, the enumerations of intervals in $\mathrm{DTam}_n$ and in $\mathrm{KTam}_n$.

\begin{lemma}[Characterizing the intervals in the Kreweras-Tamari poset]
\label{lem:ktam_int}
    Consider an interval $[uLMdTR, uL'M'dR']$, as described in \Cref{lem:all_int}, in the Kreweras-Tamari poset $\mathrm{KTam}_n$; then either the Dyck paths $M, M'$ and $T$ are empty or the Dyck paths $L, L'$ and $M$ are empty. 
\end{lemma}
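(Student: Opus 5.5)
Following the proof of \Cref{lem:dtam_int}, I would start from the decomposition of \Cref{lem:all_int} and use the shape of the cover relations in $\mathrm{KTam}_n$ from \Cref{def:ktam}. Write an interval $[w,w']$ of $\mathrm{KTam}_n$ as $[uLMdTR, uL'M'dR']$. The first pair $u\cdots d$ has value $v_1$ in $w$ and $v_1'$ in $w'$. By the bracket-vector form of the covers used in the proof of \Cref{prop:h_intervals}, a cover $AudDB \lessdot AuDdB$ changes only one coordinate $v_i$. That coordinate goes from $0$ to $v_{i+1}+1$. So along any chain from $w$ to $w'$, each coordinate is either never changed or changed exactly once, and only starting from the value $0$. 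The key consequence is that $v_1$ either stays equal to $v_1'$, or else $v_1 = 0$ and one cover raises it to $v_1'$.

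I would then split into these two cases. If $v_1 = v_1'$, the interval is of the form treated in \Cref{prop:simple_int}, namely $[uLdR, uL'dR']$. In the notation of \Cref{lem:all_int}, the part $[MT, M']$ is empty, so $M$, $M'$ and $T$ are empty, which is the first alternative. If instead $v_1 = 0 < v_1'$, then $k \le v_1 = 0$ forces $L = L' = \varnothing$. Moreover $M$ is the portion of the path inside the first pair of $w$, which has size $v_1 = 0$, so $M$ is empty as well. This is the second alternative.

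To make the second case rigorous, I would check how the first pair is affected along a chain. The only cover that touches $v_1$ has $A$ empty, that is $w = udDB \lessdot uDdB$. Before this step $v_1 = 0$, so $w$ begins with $ud$, and after it the first pair encloses $D$. Covers not touching $v_1$ keep $v_1$ fixed. Hence every element of the interval below the step has $v_1 = 0$, and in particular $w$ has $M = \varnothing$ and $L = \varnothing$.

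The main obstacle I expect is justifying that each coordinate is changed at most once and only from $0$. This requires showing that no later cover can return a coordinate to $0$ and then raise it again. In $\mathrm{KTam}_n$, covers only increase coordinates, since \Cref{cor:bracket_tam} makes the order coordinatewise and $\mathrm{KTam}_n$ is contained in $\mathrm{Tam}_n$. A coordinate that has become nonzero therefore stays nonzero and can never again be the $v_i = 0$ coordinate of a cover. This monotonicity argument is what drives both cases.
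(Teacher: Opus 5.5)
Your proof is correct and follows the paper's argument. You split on $v_1 = v_1'$, which reduces to \Cref{prop:simple_int} and makes $M$, $M'$, $T$ empty, versus $v_1 < v_1'$, where the shape $v_i = 0 \mapsto v_{i+1}+1$ of the $\mathrm{KTam}_n$ covers forces $v_1 = 0$, so $L$, $L'$, $M$ are empty. Your chain-and-monotonicity argument makes explicit the step the paper asserts directly from \Cref{def:ktam}, and you get $L' = \varnothing$ from $k \le v_1 = 0$ where the paper uses that $[L,L']$ is an interval.
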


\begin{figure}[htp]
    \centering
    \includegraphics[width=0.225\linewidth]{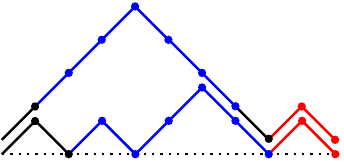}
    \caption{The interval $[(0, \textcolor{blue}{0}, \textcolor{blue}{1}, \textcolor{blue}{0}, \textcolor{red}{0}), (3, \textcolor{blue}{2}, \textcolor{blue}{1}, \textcolor{blue}{0}, \textcolor{red}{0})]$ in $\mathrm{KTam}_5$, and intervals $[(\textcolor{blue}{0}, \textcolor{blue}{1}, \textcolor{blue}{0}), (\textcolor{blue}{2}, \textcolor{blue}{1}, \textcolor{blue}{0})]$ and $[(\textcolor{red}{0}), (\textcolor{red}{0})]$ in $\mathrm{KTam}_3$ and $\mathrm{KTam}_1$, respectively.}
    \label{fig:ktam_int}
\end{figure}

\begin{proof}
    By \Cref{prop:simple_int}, if $v_1 = v_1'$, then an interval $[v = (v_1, v_2, \dots, v_n), v' = (v_1', v_2', \dots, v_n')]$ in $\mathrm{KTam}_n$ can be written as $[uLdR, uL'dR']$. This is the case where the Dyck paths $M$, $M'$, and $T$ are empty. It suffices to show that if $v_1 < v_1'$, then the Dyck paths $L, L'$, and $M$ in $[uLMdTR, uL'M'dR']$ are empty. By \Cref{lem:cover_bracket} and \Cref{def:ktam}, $v = (v_1, v_2, \dots, v_i = 0, \dots, v_n) \lessdot v' = (v_1, v_2, \dots, v_i' = v_{i+1} + 1, v_n)$ is a cover relation in $\mathrm{KTam}_n$. Consider an interval $[v, v']$ such that, for a fixed $i$, $v_i < v_i'$. Then, by \Cref{def:ktam}, $v_i = 0$. So, if $v_1 < v_1'$, then $v_1 = 0$. It follows that the Dyck paths $L$ and $M$ are empty. Since $[L, L']$ is an interval by construction, the Dyck path $L'$ must also be empty. We have the lemma as desired.
\end{proof}

\Cref{lem:ktam_int} tells us that to count intervals in $\mathrm{KTam}_n$, it suffices to consider intervals of the form $[uLdR, uL'dR']$ and $[udTR, uM'dR']$. We count these intervals in $\mathrm{KTam}_n$ as follows.

\begin{theorem}[Counting intervals in the Kreweras-Tamari poset]
\label{thm:count_ktam}
    Let $|\mathrm{KTam}_n|$ be the number of intervals $[w, w']$, or $[v, v']$, in the Kreweras-Tamari poset $\mathrm{KTam}_n$. The number of intervals in $\mathrm{KTam}_n$ is the number of hex trees (\href{https://oeis.org/A002212}{A002212}, OEIS~\cite{OEIS}), i.e.,   
    \begin{align*}
        |\mathrm{KTam}_0| = 1, |\mathrm{KTam}_n| = |\mathrm{KTam}_{n-1}| + \sum_{i=0}^{n-1} |\mathrm{KTam}_i||\mathrm{KTam}_{n-i-1}|.
    \end{align*}
\end{theorem}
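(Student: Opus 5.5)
The plan is to split the intervals of $\mathrm{KTam}_n$ into the two families allowed by \Cref{lem:ktam_int} and count each family by a first-return decomposition, mirroring the argument for \Cref{thm:count_dtam}.

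\textbf{Family 1: intervals with $v_1=v_1'$.} By \Cref{prop:simple_int}, these are the intervals $[uLdR,uL'dR']$. I would check that $[uLdR,uL'dR']$ is an interval of $\mathrm{KTam}_n$ exactly when $[L,L']$ is an interval of $\mathrm{KTam}_i$ and $[R,R']$ is an interval of $\mathrm{KTam}_{n-i-1}$, where $i=v_1$.

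For the forward direction, take any chain of covers $AudDB\lessdot AuDdB$ from $uLdR$ to $uL'dR'$. None of these covers changes $v_1$, so none moves the first return $d$. Each cover therefore acts inside $L$ or inside $R$. Such a cover is again a $\mathrm{KTam}$ cover of the smaller path, with $A$ possibly empty. For $L$ this uses that the enclosing $u$ supplies the needed context.

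For the converse, covers of $L$ and of $R$ lift directly to covers of $uLdR$. This gives $\sum_{i=0}^{n-1}|\mathrm{KTam}_i||\mathrm{KTam}_{n-i-1}|$ intervals.

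\textbf{Family 2: intervals with $v_1<v_1'$.} By \Cref{lem:ktam_int}, $v_1=0$ and $L=L'=M=\varnothing$, so the interval has the form $[udTR,uM'dR']$. I would show these are in bijection with the intervals of $\mathrm{KTam}_{n-1}$. To do so, reorganize the data so the interval becomes $[ud\,W,\;X]$ with $W$ a Dyck path of size $n-1$ and $X>ud\,W$ having $v_1'>0$. The key point is that in $\mathrm{KTam}$ the first coordinate can only grow by a cover at $i=1$ while $v_1=0$. Such a cover is $udDB\lessdot uDdB$, which swallows the next prime component $D$, and afterwards $v_1\neq 0$ forbids further growth of $v_1$.

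So every such interval factors through a unique first cover $udDB\lessdot uDdB$ at position $1$, which is applied once. Because covers at other positions commute in effect with it (they act on disjoint blocks), the interval is determined by a $\mathrm{KTam}$ interval $[DB,\,D'B']$ of size $n-1$ obtained by deleting the initial $ud$ and reading $uM'dR'$ with $u,d$ removed around $M'$.

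The map I would use is $[udW,\,uM'dR']\mapsto[W,\,M'R']$. Its inverse is $[W,W']\mapsto[udW,\,u\,M'd\,R']$, where $W'=M'R'$ is cut according to the first component count dictated by $W$. Concretely, $M'$ has size equal to the first prime component of $W$ after that component has grown. Summing both families gives
\[|\mathrm{KTam}_n|=|\mathrm{KTam}_{n-1}|+\sum_{i=0}^{n-1}|\mathrm{KTam}_i||\mathrm{KTam}_{n-i-1}|,\]
with $|\mathrm{KTam}_0|=1$ for the empty interval. This is the hex-tree recursion of A002212.

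\textbf{Main obstacle.} The delicate step is the bijection in Family 2: showing that the cover at position $1$ can be commuted to the front of any chain, and that the cut point of $W'$ is well defined. I would first try to do this with bracket vectors. The final $v_1'$ equals $v_2+1$ evaluated at the moment the position-$1$ cover occurs, and the condition that later covers only touch $v_j$ with $v_j=0$ gives a normal form. From that normal form, $v'=(v_1',\,v_2',\dots,v_n')$ corresponds to $(v_2',\dots,v_n')$ together with the forced value $v_1'$. If that fails, I would fall back to a direct Dyck-path surgery analogous to the proof of \Cref{lem:dtam_hard}.
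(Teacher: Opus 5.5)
Your Family 1 argument is fine. Your two Family 2 maps are exactly the paper's: delete the first pair, i.e.\ delete $v_1$, with inverse $[(w_1,\dots,w_{n-1}),(w_1',\dots,w_{n-1}')]\mapsto[(0,w_1,\dots),(w_1'+1,w_1',\dots)]$. The gap is in how you plan to justify Family 2. The position-1 cover does \emph{not} commute with the other covers, and it cannot be moved to the front of a chain. So the interval $[udW,X]$ is not the interval $[DB,D'B']$ with $W=DB$. The culprit is the cover at position $2$, the outer pair of the first component of $W$. Before the position-1 cover it can enlarge the component that will be swallowed; after it, it is disabled. Concretely, in $\mathrm{KTam}_3$ the interval $[ududud,uuuddd]$ has the single maximal chain $ududud\lessdot uduudd\lessdot uuuddd$, with the position-1 cover last. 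Doing that cover first gives $uuddud$, which has no upper cover at all. Here the swallowed component is $D=uudd$ while $W=udud$, so $DB\neq W$. The step you single out as the main obstacle is therefore false as stated, and trying to prove it would stall.

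The fix is to move the position-1 cover to the \emph{end}, or simply split the chain there.
\begin{itemize}
\item Since $v_1$ is monotone and only moves from $0$, the down-set $\{v_1=0\}$ is a copy of $\mathrm{KTam}_{n-1}$ via $udW\mapsto W$.
\item Any chain from $udW$ to $X=uD'dB'$ contains a unique cover $udDB\lessdot uDdB$. The part before it gives $W\le DB$ in $\mathrm{KTam}_{n-1}$.
\item After that cover, pair $2$ is frozen. If $v_2>0$, this is because only zero entries move. If $v_2=0$, it is because pair $2$ is then followed by the fixed closing $d$ of pair $1$.
\item Hence every later cover lies inside the interior of $D$ or inside $B$. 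Each is also a cover of $DB$, which gives $DB\le D'B'$ and so $W\le D'B'$.
\item Freezing pair $2$ also gives the forced value $v_1'=v_2'+1$. This is what makes cutting $W'$ at its first prime component well defined; note the cut is read off from $W'$, not from $W$.
\item Conversely, run a $\mathrm{KTam}_{n-1}$ chain $W\to W'$ inside $udW$, and apply the position-1 cover last.
\end{itemize}
Your bracket-vector fallback is essentially this argument.

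One further point: the bijection needs $W'$ nonempty, so it only holds for $n\ge 2$. At $n=1$ Family 2 is empty although $|\mathrm{KTam}_0|=1$, so the displayed recurrence would give $|\mathrm{KTam}_1|=2$ instead of $1$. The paper's proof has the same oversight.
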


\begin{proof}
    The empty interval is unique, i.e., $|\mathrm{KTam}_0| = 1$. By \Cref{lem:ktam_int}, an interval $[w, w']$ in $\mathrm{KTam}_n$ is either of the form $[uLdR, uL'dR']$ or $[udTR, uM'dR']$. Consider intervals $[uLdR, uL'dR']$. Let $L$ and $L'$ be Dyck paths of length $i$ for $0 \leq i \leq n-1$. So, $R$ and $R'$ are Dyck paths of length $n-i-1$. We have $|\mathrm{KTam}_i|$ intervals $[L, L']$ and $|\mathrm{KTam}_{n-i-1}|$ intervals $[R, R']$ for any fixed $i$. Hence, there are $\sum_{i=0}^{n-1} |\mathrm{KTam}_{i}||\mathrm{KTam}_{n-i-1}|$ intervals $[uLdR, uL'dR']$. 

    Consider the intervals $[udTR, uM'dR']$ in $\mathrm{KTam}_n$, which we rewrite as $[v = (v_1, v_2, \dots, v_{n}), v' = (v_1', v_2', \dots, v_n')]$. By removing $v_1$ and $v_1'$, we have $(v_2, \dots, v_n)$ and $(v_2', \dots, v_n')$ (or $TR$ and $M'R'$, respectively, as Dyck paths). Since $[v, v']$ are intervals in $\mathrm{KTam}_n$, $[(v_2, \dots, v_n), (v_2', \dots, v_n')]$ are intervals in $\mathrm{KTam}_{n-1}$. Conversely, consider the intervals $[(v_1, v_2, \dots, v_{n-1}), (v_1', v_2', \dots, v_{n-1})]$ in $\mathrm{KTam}_{n-1}$. Observe that $[(0, v_1, v_2, \dots, v_{n-1}), (v_1' + 1, v_1', v_2', \dots, v_{n-1}')]$ are intervals in $\mathrm{KTam}_{n}$. By \Cref{lem:ktam_int}, these intervals are of the form $[udTR, uM'dR']$. So, there are $|\mathrm{KTam}_{n-1}|$ intervals $[udTR, uM'dR']$. We have the sum $|\mathrm{KTam}_n| = |\mathrm{KTam}_{n-1}| + \sum_{i=0}^{n-1} |\mathrm{KTam}_i||\mathrm{KTam}_{n-i-1}|$ as desired.
\end{proof}

\subsection{Mapping intervals in the skeletal posets}
\label{subsec:map_int}

At the end of \Cref{subsec:prop_w_sk}, we call the Dyck-Tamari poset $\mathrm{DTam}_n$ the long skeletal poset of the Tamari lattice, and the Kreweras-Tamari poset $\mathrm{KTam}_n$ the wide skeletal poset of the Tamari lattice. This naming convention is meant to highlight their connections. For the remainder of \Cref{sec:sk_tam}, we give further details to what these connections are; first, with a very intriguing observation, then, with the generalizations of these posets for the $m$-Tamari lattice, a generalization of the Tamari lattice using $m$-Dyck paths introduced by Bergeron and Préville-Ratelle~\cite{BPR}.

Recall that cover relations in $\mathrm{DTam}_n$ and in $\mathrm{KTam}_n$ are equinumerous, which follows from the bijection in \Cref{prop:cov_bij}. Naturally, the next step is to extend the map in \Cref{prop:cov_bij} from cover relations to intervals.

\begin{figure}[htp]
    \centering
    \captionsetup{width = 0.75\linewidth}
    \includegraphics[width=0.55\linewidth]{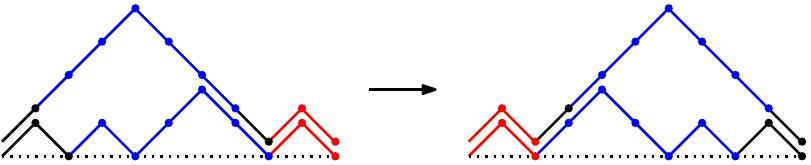}
    \caption{The interval $[(0, \textcolor{blue}{0}, \textcolor{blue}{1}, \textcolor{blue}{0}, \textcolor{red}{0}), (3, \textcolor{blue}{2}, \textcolor{blue}{1}, \textcolor{blue}{0}, \textcolor{red}{0})]$ in $\mathrm{KTam}_5$ mapped to the interval $[(\textcolor{red}{0}, \textcolor{blue}{1}, \textcolor{blue}{0}, \textcolor{blue}{0}, 0), (\textcolor{red}{0}, 3, \textcolor{blue}{2}, \textcolor{blue}{1}, \textcolor{blue}{0}, \textcolor{red}{0})]$ in $\mathrm{DTam}_5$.}
    \label{fig:mirror}
\end{figure}

Consider $AudDB$ in the cover relation $AudDB \lessdot AuDdB$ in $\mathrm{KTam}_n$ by \Cref{def:ktam}. Let $\mathrm{M}$ be the \emph{mirror} map defined as follows: for letters $u$ and $d$ in a Dyck path $w$, $\mathrm{M}: u_k \rightarrow d_{2n-k+1}$ and $\mathrm{M}: d_k \rightarrow u_{2n-k+1}$ for $1 \leq k \leq 2n$. For example, $\mathrm{M}: u_1d_2u_3u_4d_5d_6 \rightarrow u_1u_2d_3d_4u_5d_6$. The idea is to transform $udu$ in $AudDB = AuduB'$, where $uB' = DB$, to $dud$ in \reflectbox{$B'$}$dud$\reflectbox{$A$}. By \Cref{def:dtam}, we have a cover relation \reflectbox{$B'$}$dud$\reflectbox{$A$} $\rightarrow$ \reflectbox{$B'$}$udd$\reflectbox{$A$} in $\mathrm{DTam}_n$. Note that for a cover relation $AudDB \lessdot AuDdB$ in $\mathrm{KTam}_n$, the mirror map $\mathrm{M}$ takes such a cover relation to cover relation $AdudB \lessdot AuddB$ in $\mathrm{DTam}_n$ if and only if $D = ud$. Otherwise, we have the following.

\begin{proposition}[A map from intervals in the Kreweras-Tamari poset to intervals in the Dyck-Tamari poset]
\label{prop:map_int_sk}
    Let $\mathrm{M}$ be a map such that for any letter $u$ and $d$ in a Dyck path, or tuples of Dyck paths, $\mathrm{M}: u_k \rightarrow d_{2n-k+1}$ and $\mathrm{M}: d_k \rightarrow u_{2n-k+1}$ for $1 \leq k \leq 2n$. Consider an interval $[w, w']_{\mathrm{KTam}_n}$ in $\mathrm{KTam}_n$; then $\mathrm{M}: [w, w']_{\mathrm{KTam}_n}$$\rightarrow$$[$\reflectbox{$w$}$,$\reflectbox{$w'$}$]_{\mathrm{DTam}_n}$, where $[$\reflectbox{$w$}$,$\reflectbox{$w'$}$]_{\mathrm{DTam}_n}$ is an interval in $\mathrm{DTam}_n$.
\end{proposition}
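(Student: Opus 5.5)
By the definition of an interval, it suffices to show that $\mathrm{M}$ preserves comparability. That is, if $w \leq w'$ in $\mathrm{KTam}_n$, then \reflectbox{$w$} $\leq$ \reflectbox{$w'$} in $\mathrm{DTam}_n$. Since $\mathrm{KTam}_n$ is the reflexive transitive closure of its cover relations (\Cref{def:ktam}), and $\mathrm{DTam}_n$ is likewise the reflexive transitive closure of the relations $AdudB \lessdot AuddB$ (\Cref{def:dtam}), I would reduce to a single cover relation $AudDB \lessdot AuDdB$ of $\mathrm{KTam}_n$. For such a cover I would show that its mirror image is joined by a chain of $\mathrm{DTam}_n$ cover relations. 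Transitivity then gives the statement for arbitrary intervals. First I would check that $\mathrm{M}$ sends Dyck paths of size $n$ to Dyck paths of size $n$: reversing the word and swapping $u \leftrightarrow d$ turns prefix conditions into suffix conditions. It is also an involution, so it is a bijection on the ground sets.

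Next I would compute the mirror of a cover explicitly. Writing $\widetilde{X}$ for the mirror of a subword $X$, one has $\mathrm{M}(AudDB) = \widetilde{B}\,\widetilde{D}\,ud\,\widetilde{A}$ and $\mathrm{M}(AuDdB) = \widetilde{B}\,u\,\widetilde{D}\,d\,\widetilde{A}$. Here $\widetilde{D}$ is again a Dyck path; it is minimal (prime), since $D$ is. So the task is to move the pair $u$ leftward across the prime Dyck path $\widetilde{D}$, that is, to pass from $\widetilde{D}ud$ to $u\widetilde{D}d$, inside the fixed context $\widetilde{B}\,\cdot\,\widetilde{A}$. The base case is $D = ud$. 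There $\widetilde{D} = ud$, and the required step $\widetilde{B}\,ud\,ud\,\widetilde{A} \to \widetilde{B}\,uudd\,\widetilde{A}$ is exactly one $\mathrm{DTam}_n$ cover relation, of the form $A'dudB' \lessdot A'uddB'$ with $A' = \widetilde{B}u$ nonempty, as remarked before the proposition.

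For general $D$ I would induct on the size of $D$. Write $\widetilde{D} = u\,E\,d$ with $E$ a Dyck path, and decompose $E = E_1E_2\cdots E_r$ into prime pieces. The idea is to move the free $u$ leftward one prime block at a time, using the inductive hypothesis on the smaller prime pieces. Throughout, one must keep track of the height function $h$ of \Cref{thm:dtam}, that is, the bracket-vector sum. Each $\mathrm{DTam}_n$ step raises $h$ by exactly $1$. Hence the chain must have length $h(\mathrm{M}(w')) - h(\mathrm{M}(w))$, and this should equal the size of $D$. Verifying this count through the bracket vectors of \Cref{cor:bij} would serve as a consistency check that the chain I construct is long enough.

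The main obstacle is the step where the moving $u$ must pass an up step $u$ of $\widetilde{D}$. The only local move available in $\mathrm{DTam}_n$ is $dud \to udd$, which passes a $u$ leftward across a single $d$ only. A $u$ cannot be swapped with another $u$ directly. My first attempt would be to reorganize the motion so that the trailing $d$ of the factor $ud$ is what effectively travels, so that every elementary step is of the form $dud \to udd$. If that fails, I would instead compare the two endpoints directly in bracket-vector form. Using \Cref{cor:comp_bracket} and \Cref{cor:dtam_map_int} as the comparability criterion in $\mathrm{DTam}_n$, I would verify that criterion for $\mathrm{M}(w)$ and $\mathrm{M}(w')$; this avoids building the chain by hand.
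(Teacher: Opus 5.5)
\textbf{The step you flag as the main obstacle cannot be completed, because the proposition itself is false.} Your reduction to a single generating cover is fine, and so is your computation $\mathrm{M}(AudDB)=\widetilde{B}\,\widetilde{D}\,ud\,\widetilde{A}$, $\mathrm{M}(AuDdB)=\widetilde{B}\,u\widetilde{D}d\,\widetilde{A}$. Both match the paper.

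Here is a counterexample. Take $n=5$ and the prime path $D=uuuddudd$. Then $w=ud\,D=uduuuddudd\lessdot w'=uDd=uuuudduddd$ is a generating cover of $\mathrm{KTam}_5$. Its bracket vectors are $(0,3,1,0,0)$ and $(4,3,1,0,0)$, as \Cref{lem:cover_bracket} predicts. The mirrors are $\mathrm{M}(w)=uuduuddd\,ud$ and $\mathrm{M}(w')=uuuduudddd$, with bracket vectors $(3,0,1,0,0)$ and $(4,3,0,1,0)$. These are incomparable componentwise, so they are incomparable in $\mathrm{Tam}_5$ by \Cref{cor:bracket_tam}. A fortiori they are incomparable in $\mathrm{DTam}_5$, whose generating relations are Tamari covers.

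You can also see this without bracket vectors. A move $dud\to udd$ at positions $k,k+1,k+2$ raises the number of $u$'s in the prefix of length $k$ by one and changes no other prefix count. The prefix counts of $\mathrm{M}(w)$ and $\mathrm{M}(w')$ are $1,2,2,3,4,4,4,4,5,5$ and $1,2,3,3,4,5,5,5,5,5$. So a chain would need exactly one move at each $k\in\{3,6,7,8\}$ and none elsewhere. The move at $k=3$ needs letter $5$ to be a $d$. But letter $5$ is a $u$, and only a move at $k=4$ or $k=5$ could change it. The factor $d_3u_4u_5$ is frozen, exactly as in \Cref{cor:comp}.

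More generally, let $(x_1,\dots,x_m)$ be the bracket vector of $\widetilde{D}$. Then $\widetilde{D}ud$ and $u\widetilde{D}d$ have bracket vectors $(x_1,\dots,x_m,0)$ and $(m,x_1,\dots,x_m)$. So the relation you need forces $x_{i+1}\leq x_i$ for all $i$. This holds iff $\widetilde{D}$ has no factor $duu$, i.e.\ iff $D$ has no factor $ddu$.

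The condition is also sufficient. Such a $\widetilde{D}$ has the form $u^ad^{e_1}ud^{e_2}\cdots ud^{e_s}$. In $\widetilde{D}ud$, take the isolated $u$'s from left to right, the new one last. Slide each leftward, one $d$ at a time by moves $dud\to udd$, into the slot of its predecessor; the first joins the initial run of $u$'s.

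Consequences for your plan:
\begin{itemize}
\item No induction on the size of $D$ can close the gap.
\item Your height check cannot detect the failure: here both height differences equal $4=|D|$.
\item Your fallback of testing the criterion of \Cref{cor:comp_bracket} would expose the failure rather than prove the claim.
\end{itemize}

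The paper's proof breaks at the same place. It asserts that $\mathrm{M}(D)ud$ has no factor $duu$ ``since $\mathrm{M}(duu)=ddu$'', but nothing in \Cref{def:ktam} prevents $D$ from containing $ddu$. What survives is the implication for $\mathrm{KTam}_n$-relations that are composites of covers $AudDB\lessdot AuDdB$ with $D$ avoiding $ddu$. In particular the statement holds for all $n\leq 4$, where every prime $D$ has size at most $3$, and $n=5$ is the first failure.
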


\begin{proof}
    It suffices to consider the case where $[w, w']$ is a cover relation in $\mathrm{KTam}_n$. The singleton case, that is $w = w'$, holds by reflexivity. Consider a cover relation $AudDB \lessdot AuDdB$ in $\mathrm{KTam}_n$. Observe that $\mathrm{M}(AudDB) = \mathrm{M}(B)\mathrm{M}(D)ud\mathrm{M}(A)$ and $\mathrm{M}(AuDdB) = \mathrm{M}(B)u\mathrm{M}(D)d\mathrm{M}(A)$. We may ignore $\mathrm{M}(A)$ and $\mathrm{M}(B)$. So, it remains to show that $\mathrm{M}(D)ud \leq u\mathrm{M}(D)d$. By \Cref{cor:comp}, $\mathrm{M}(D)ud \leq u\mathrm{M}(D)d$ if and only if $\mathrm{M}(D)ud \neq u \dots duu \dots d$. Since $\mathrm{M}(duu) = ddu$ and cover relations in $\mathrm{KTam}_n$ are of the form $AudDB \lessdot AuDdB$, $\mathrm{M}(D)ud \neq u \dots duu \dots d$. We conclude that $\mathrm{M}$ maps cover relations in $\mathrm{KTam}_n$ to intervals in $\mathrm{DTam}_n$. The general case follows by transitivity. We have the proposition as claimed.  
\end{proof}

The converse of \Cref{prop:map_int_sk} does not hold by \Cref{thm:count_dtam} and \Cref{thm:count_ktam}. For this preliminary version, we do not characterize intervals in $\mathrm{DTam}_n$ that are in bijection with intervals in $\mathrm{KTam}_n$. 

\subsection{The m-skeletal posets of the m-Tamari lattice}
\label{subsec:m_sk}

We end \Cref{sec:sk_tam} with definitions of the $m$-Dyck-Tamari and $m$-Kreweras-Tamari posets, i.e., the skeletal posets of the $m$-Tamari lattice, and recount discussions relevant for future work. To define the $m$-Dyck-Tamari poset $m$-$\mathrm{DTam}_n$ and the $m$-Kreweras-Tamari poset $m$-$\mathrm{KTam}_n$, we need $m$-Dyck paths; a generalization of Dyck paths where every letter $u$ belongs to a sequence of consecutive letters $u$.

\begin{definition}[$m$-Dyck paths, folklore]
\label{def:m_Dyck}
    A $m$-Dyck path $w = w_1w_2 \cdots w_{(m+1)n}$ for $m \in \mathbb{N}$ is a Dyck path of size $mn$ where every letter $u$ in $w$ belongs to a sequence of $mk$ consecutive letters $u$ for $1 \leq k \leq n$.
\end{definition}

\begin{figure}[htp]
    \centering
    \includegraphics[width=0.35\linewidth]{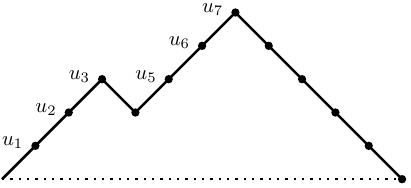}
    \caption{An example of a $3$-Dyck path.}
    \label{fig:3_Dyck}
\end{figure}

Since the $m$-Tamari lattice is an interval in the Tamari lattice restricted to $m$-Dyck paths, we do not need to (re)define it here. However, the same can not be said for the skeletal posets of the $m$-Tamari lattice. To define these posets, we extend the bijection from \Cref{prop:cov_bij}. We recommend that the reader see \Cref{fig:cov_bij} before continuing. 

For a Dyck path $w$, the bijection from \Cref{prop:cov_bij} involves adding $ud$ after a letter $d$ in $w$ or before its paired letter $u$ in $w$. By construction, the resulting path naturally maps to another via the cover relation $AdudB \lessdot AuddB$ in $\mathrm{DTam}_n$ by \Cref{def:dtam} or the cover relation $AudDB \lessdot AuDdB$ in $\mathrm{KTam}_n$ by \Cref{def:ktam}. 

Using this idea of adding $ud$, we extend our bijection to $m$-Dyck paths. Consider the figure below. 

\begin{figure}[htp]
    \centering
    \includegraphics[width=0.75\linewidth]{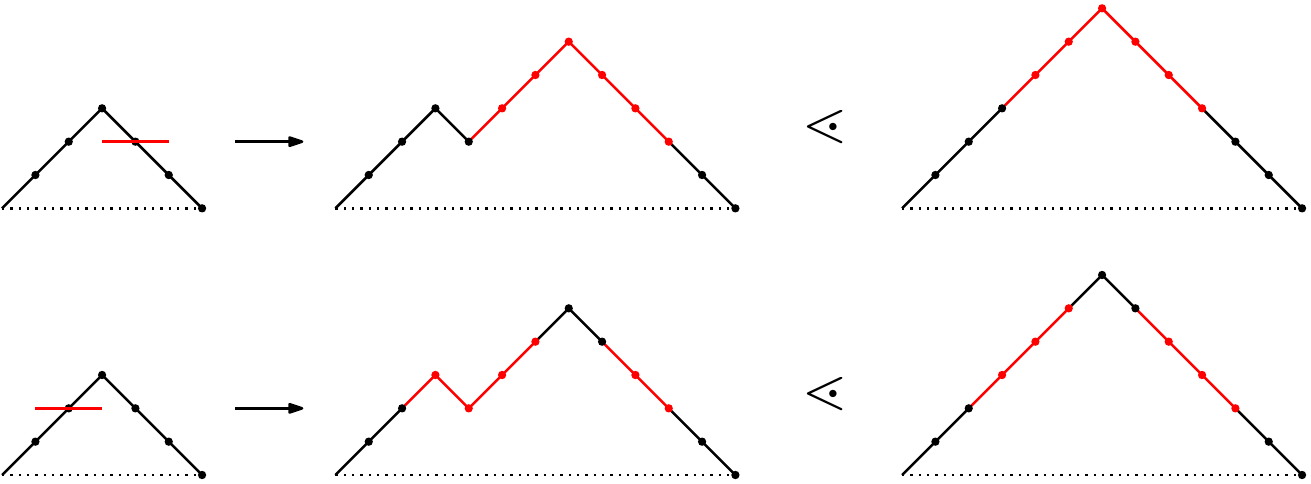}
    \caption{The bijection from \Cref{prop:cov_bij} for $3$-Dyck paths. \textbf{Top:} from a $3$-Dyck path to a cover relation in $3$-$\mathrm{DTam}_n$. \textbf{Bottom:} from a $3$-Dyck path to a cover relation in $3$-$\mathrm{KTam}_n$. In this case as depicted, they are the same.} 
    \label{fig:3_cover}
\end{figure}

For a $m$-Dyck path $w$, we add $u^md^m$ after a letter $d$ as shown in the top half of \Cref{fig:3_cover}. The desired cover relation is $Adu^md^mB \lessdot Au^md^{m+1}B$, which defines the $m$-Dyck-Tamari poset $m$-$\mathrm{DTam}_n$. Consider the same letter $d$ and its paired letter $u$. Unlike the case $m=1$, adding $u^md^m$ before a letter $u$ may map the $m$-Dyck path to an $m'$-Dyck path where $m \neq m'$. So, we proceed as follows:

\begin{itemize}
    \item we write $w$ as $Au^ku^{m-k}B$ for $1 \leq k \leq m$, and split $u^md^m$ into $u^{m-k}d^{m-k}$ and $u^kd^k$,
    \item we add $u^{m-k}d^{m-k}$ after $Au^k$ to get $Au^ku^{m-k}d^{m-k}u^{m-k}B$,
    \item $u^{m-k}$ is the first consecutive letters $u$ in the Dyck path $D$ of minimal size following $d^{m-k}$, thus $Au^ku^{m-k}d^{m-k}u^{m-k}B = Au^ku^{m-k}d^{m-k}DB'$, where $u^{m-k}B = DB'$,
    \item we add $u^k$ before $D$ and $d^k$ after $D$ to get $Au^ku^{m-k}d^{m-k}u^kDd^kB$, and
    \item rewrite $Au^k$ as $A'$, so, 
    \item the desired cover relation is $A'u^{m-k}d^{m-k}u^kDd^kB' \lessdot A'u^{m-k}d^{m-k-1}u^{k}Dd^{k+1}B'$, which \newline defines the $m$-Kreweras-Tamari poset $m$-$\mathrm{KTam}_n$.
\end{itemize}

By construction, $u^kDd^k$ is a $m$-Dyck path. So, $A'u^{m-k}d^{m-k}u^kDd^kB' \lessdot A'u^{m-k}d^{m-k-1}u^{k}Dd^{k+1}B'$ can be written as $Au^kd^kDB \lessdot Au^kd^{k-1}DdB$ for $1 \leq k \leq m$, and $D$ is the $m$-Dyck path of minimal size. We have $mnC_n^m$ cover relations $Adu^md^mB \lessdot Au^md^{m+1}B$ and $mnC_n^{m}$ cover relations $Au^kd^kDB \lessdot Au^kd^{k-1}DdB$, where $C_n^m = \frac{1}{mn+1}\binom{(m+1)n}{n}$ is a Fuss-Catalan number. The reader may check that the case $m=1$ matches \Cref{def:dtam}, \Cref{def:ktam}, and \Cref{prop:cov_bij}. We define, formally, the $m$-Dyck-Tamari poset $m$-$\mathrm{DTam}_n$ and the $m$-Kreweras-Tamari poset $m$-$\mathrm{KTam}_n$ below.

\begin{definition}[The $m$-Dyck-Tamari poset]
\label{def:m_dtam}
    The \emph{m-Dyck-Tamari poset} $m$-$\mathrm{DTam}_n$ for $m \in \mathbb{N}$ is the reflexive transitive closure of cover relations $w = Adu^md^mB \lessdot w' = Au^md^{m+1}B$, where $w$ and $w'$ are $m$-Dyck paths of size $mn$ and $A$ is nonempty.
\end{definition}

\begin{definition}[The $m$-Kreweras-Tamari poset]
\label{def:m_ktam}
    The \emph{m-Kreweras-Tamari poset} $m$-$\mathrm{KTam}_n$ for $m \in \mathbb{N}$ is the reflexive transitive closure of cover relations $Au^kd^kDB \lessdot Au^kd^{k-1}DdB$ for $1 \leq k \leq m$, where $w$ and $w'$ are $m$-Dyck paths of size $mn$, and $D$ is the nonempty $m$-Dyck path of minimal size.
\end{definition}

\subsection{Skeletal posets for Cambrian lattices?}
\label{subsec:camb_sk}

When we found the Dyck-Tamari poset $\mathrm{DTam}_n$ and the Kreweras-Tamari poset $\mathrm{KTam}_n$, we did not expect their intervals to be so well behaved and closely connected; as seen with the bijection between their cover relations and the map from intervals in $\mathrm{KTam}_n$ to intervals in $\mathrm{DTam}_n$. So, we wondered whether these posets, collectively called skeletal posets, can be generalized to Cambrian lattices. 

Cambrian lattices were introduced by Reading~\cite{Rea1} as a generalization of the Tamari lattice in the Coxeter groups direction; notably to Coxeter group of other types besides $A$ (the Tamari lattice is a Cambrian lattice of type $A$). Cambrian lattices can be defined on words. In this setting, Chenevière suggests a type uniform definition for skeletal posets of Cambrian lattices. For cover relations in a Cambrian lattice, one may label them by the differences in lengths or absolute lengths. Chenevière suggested that the skeletal posets be defined by the restriction of the differences in lengths and absolute lengths to $1$. At the time of this writing, we have not explored this direction. Alternatively, since Cambrian lattices (of type $A$) may be defined using Cambrian trees of Châtel and Pilaud~\cite{CP}, we may then define skeletal posets via restrictions on tree rotations. This is our approach in~\cite{L1}.

Besides studying these posets by themselves, they may be used to find other lattices; see \Cref{sec:alt} for an approach to doing so with $\mathrm{DTam}_n$, though this is not the purpose of \Cref{sec:alt}.  

\section{Altitude lattices}
\label{sec:alt}

This section is independent of \Cref{sec:sk_tam} and \Cref{sec:kneser}. The goal here is to introduce altitude lattices, which are inspired by the altitude-Tamari lattices, or alt-Tamari for short, of Chenevière. By construction, altitude lattices within the same family refine a well-chosen distributive lattice. They are anatomically related and have the same number of linear intervals. When this distributive lattice is the Dyck lattice, the family of altitude lattices contains the alt-Tamari lattices among other lattices. Unfortunately, due to our approach, we do not know whether all of these lattices are already in the literature. For example, for $n=4$, there are $4$ alt-Tamari lattices (see section $4.1$ and remark $4.3$ in~\cite{C1}). With our approach, we found $4$ altitude lattices in this family that are not the Dyck and Tamari lattices (see \Cref{fig:samples}).

\subsection{An order-theoretic approach to linear intervals}
\label{subsec:study_lin_int}

The alt-Tamari lattices of Chenevière, to the best of our knowledge, initiated the study of families of lattices having the same number of linear intervals. The alt $\nu$-Tamari lattices of Ceballos and Chenevière~\cite{CC} and, more generally, some framing lattices of von Bell and Ceballos~\cite{vBC}, are examples of such families. They inspired us to study other posets with the same number of linear intervals. 

For this paper, we focus on posets with some intervals isomorphic to the Dyck lattice $\mathrm{Dyck}_3$ or the Tamari lattice $\mathrm{Tam}_3$. Specifically, we study such posets where one refines (resp., extends) another. We give a local approach to refining (resp., extending) these posets that preserves the number of linear intervals. With our approach, we show that there exist lattices (besides the alt-Tamari lattices and the Cambrian lattices) with the same number of linear intervals as the Dyck and Tamari lattices. By construction, they refine the Dyck lattice. Finally, all of the lattices in this section are posets induced by embeddings of skeletal posets into the grid, which lead naturally to some extremal problems.

\subsection{On linear intervals}
\label{subsec:lin_int}

We begin with the figure below and some brief remarks for this section. 

\begin{figure}[htp]
    \centering
    \includegraphics[width=0.4\linewidth]{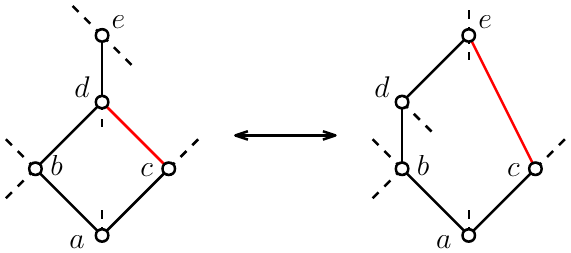}
    \caption{The Dyck lattice $\mathrm{Dyck}_3$ (left) and the Tamari lattice $\mathrm{Tam}_3$ (right) as intervals in some posets $\mathcal{P}$ and $\mathcal{Q}$, respectively. $- -$ indicate the existence of cover relations involving elements not in these intervals. $\longleftrightarrow$ indicates the maps between $\mathcal{P}$ and $\mathcal{Q}$, i.e., refinement of $\mathcal{P}$/extension of $\mathcal{Q}$, involving $c \textcolor{red}{\lessdot} d$ and $c \textcolor{red}{\lessdot} e$ in \textcolor{red}{red}. In this figure, $\mathcal{P}$ and $\mathcal{Q}$ have the same number of linear intervals (see \Cref{prop:same_lin_1}).}
    \label{fig:refine}
\end{figure}

Moving forward, we let $\mathcal{P}$ be a poset with $\mathrm{Dyck}_3$ as an interval, and $\mathcal{Q}$ be a poset obtained by refining that interval to get $\mathrm{Tam}_3$, and vice versa. For convenience, $a, b, c, d,$ and $e$ are always labeled elements in these intervals. We write $\dots$$\lessdot a \lessdot b \lessdot d \lessdot \dots$ to mean linear intervals that contain $[a, d]$; similarly, $\dots$$\lessdot c \lessdot d \lessdot \dots$, $\dots$$\lessdot c \lessdot e \lessdot \dots$, etc. Since the dual of a poset does not change its number of linear intervals, i.e., changing $\lessdot$ to $\gtrdot$ in $\mathcal{P}$ and in $\mathcal{Q}$, we only consider the case $\lessdot$. 

Our goal is to study how the number of linear intervals in $\mathcal{P}$ and $\mathcal{Q}$ changes when one is mapped to the other. Later in the section, we will iteratively refine $\mathcal{P}$ to get $\mathcal{Q}$. For now, we focus on different cases of $\mathcal{P}$ and $\mathcal{Q}$. Let us study the Dyck and Tamari lattices in \Cref{fig:refine} without the cover relations $- -$. The reader may check that these lattices have the same number of linear intervals: $5$ elements, $5$ cover relations, and $2$ linear intervals that are $3$-element chains. On the one hand, in the Dyck lattice, $c \lessdot d$ is a linear interval but not $[a, d]$. On the other hand, in the Tamari lattice, $a \lessdot b \lessdot d$ is a linear interval, but the elements $c$ and $d$ are incomparable. If $a \lessdot b \lessdot d$ and $c \lessdot d$ are the only linear intervals that contain elements $a, b, d$ and elements $c, d$, respectively, then $\mathcal{P}$ and $\mathcal{Q}$ have the same number of linear intervals. 

For a more general case, consider $\mathcal{P}$ and $\mathcal{Q}$ as depicted in \Cref{fig:refine}. Note that any linear interval not involving $a \lessdot b \lessdot d$, $c \lessdot d$, or $c \lessdot e$ are in $\mathcal{P}$ and in $\mathcal{Q}$. We show that $\mathcal{P}$ and $\mathcal{Q}$ have the same number of linear intervals. 

\begin{proposition}[Posets with the same number of linear intervals, I]
\label{prop:same_lin_1}
    Suppose $a$ is a minimal element and there does not exist elements $x \neq a$ and $y \neq e$ such that $x \lessdot c$ and $d \lessdot y$. Then $\mathcal{P}$ and $\mathcal{Q}$ have the same number of linear intervals. 
\end{proposition}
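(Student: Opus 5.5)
The plan is to set up an explicit correspondence between the linear intervals of $\mathcal{P}$ and those of $\mathcal{Q}$. Most linear intervals will match identically, and the few that differ will be exchanged in pairs.

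First I fix the local structure, reading it off \Cref{fig:refine}. In $\mathcal{P}$ the interval $[a,e]\cong\mathrm{Dyck}_3$ has covers $a\lessdot b$, $a\lessdot c$, $b\lessdot d$, $c\lessdot d$, $d\lessdot e$. In $\mathcal{Q}$ the interval $[a,e]\cong\mathrm{Tam}_3$ has the same covers except that $c\lessdot d$ is replaced by $c\lessdot e$. All other cover relations are common to $\mathcal{P}$ and $\mathcal{Q}$. The hypotheses say three things: $a$ is minimal, $a$ is the only lower cover of $c$, and $e$ is the only upper cover of $d$. Two consequences will be used repeatedly.
\begin{itemize}
\item In both posets, everything strictly below $c$ is $a$.
\item In $\mathcal{P}$, everything strictly above $c$ is either $d$ or lies weakly above $e$.
\end{itemize}

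Next I sort an interval $[x,y]$ according to whether it can see the change at all. If $[x,y]$ uses neither $c\lessdot d$ nor $c\lessdot e$ and does not contain both $a$ and $d$, then its elements and covers are identical in $\mathcal{P}$ and $\mathcal{Q}$. Such intervals are therefore linear in one poset exactly when they are linear in the other. This covers intervals such as $[b,e]=\{b,d,e\}$ and $[a,b]$, as well as intervals $[x,d]$ with $x\lessdot b$, $x\neq a$; in the last case $x\not\leq c$ because $a$ is the only lower cover of $c$. The remaining intervals fall into three groups, and I treat each in turn.
\begin{itemize}
\item \emph{Intervals with bottom $a$ reaching $d$ or above.} In $\mathcal{Q}$, $[a,d]=\{a,b,d\}$ is a linear interval. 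In $\mathcal{P}$, $[a,d]$ contains both $b$ and $c$, so it is not linear. Any $[a,y]$ with $y\geq e$ contains $b$ and $c$ in both posets, so it is linear in neither. Since $a$ is minimal, no interval $[x,y]$ with $x<a$ needs to be considered.
\item \emph{Intervals with bottom $c$ reaching $d$ or above.} In $\mathcal{P}$, $[c,d]$ is linear (it is a cover), while in $\mathcal{Q}$ the elements $c$ and $d$ are incomparable. For $z\geq e$, the second consequence above gives $[c,z]_{\mathcal{P}}=\{c,d\}\cup[e,z]$ and $[c,z]_{\mathcal{Q}}=\{c\}\cup[e,z]$. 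Because $d$'s only upper cover is $e$ and $c$'s only new upper cover in $\mathcal{Q}$ is $e$, one is a chain exactly when $[e,z]$ is. So these match one-to-one.
\item \emph{Intervals with top $c$.} These are only $[c,c]$ and $[a,c]$, and they coincide in the two posets.
\end{itemize}

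Putting this together, $\mathcal{P}$ has exactly one linear interval with no partner, namely $[c,d]$, and $\mathcal{Q}$ has exactly one, namely $[a,d]$. Matching these two gives the equality of counts. The main obstacle is making sure the classification is exhaustive: every interval $[x,y]$ whose element set or cover structure changes must contain $c$ together with $d$ or $e$, or contain both $a$ and $d$. I would verify this by noting that comparabilities differ only for pairs $(c,d)$ and $(c,z)$ with $z\geq d$, and then applying the hypotheses on the covers of $c$ and $d$ to control every other element lying between $x$ and $y$.
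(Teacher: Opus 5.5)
Your matching is the same as the paper's. You pair $[c,d]$ in $\mathcal{P}$ with $[a,d]=\{a,b,d\}$ in $\mathcal{Q}$, and you match $[c,z]_{\mathcal{P}}$ with $[c,z]_{\mathcal{Q}}$ for $z\geq e$.

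However, your ``second consequence'' does not follow from the hypotheses. They constrain only the lower covers of $c$ and the upper covers of $d$, so nothing prevents $c$ from having an upper cover $g\neq d$. For instance, take the covers $a\lessdot b$, $a\lessdot c$, $b\lessdot d$, $c\lessdot d$, $d\lessdot e$, $c\lessdot g$, $e\lessdot \hat{1}$, $g\lessdot \hat{1}$. This is a lattice with $[a,e]\cong\mathrm{Dyck}_3$, and all hypotheses hold. Yet $g>c$ is neither $d$ nor above $e$. Then $[c,\hat{1}]_{\mathcal{P}}=\{c,d,e,g,\hat{1}\}\neq\{c,d\}\cup[e,\hat{1}]$. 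Moreover, $[e,\hat{1}]$ is a chain while $[c,\hat{1}]$ is a chain in neither poset, so your criterion ``one is a chain exactly when $[e,z]$ is'' is false. The two intervals still match (both are nonlinear), so the conclusion survives, but your justification of that step does not.

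The repair is to first prove what your final paragraph only gestures at, and slightly misstates: under the hypotheses, $\leq_{\mathcal{Q}}$ is exactly $\leq_{\mathcal{P}}$ with the single pair $c<d$ removed.
\begin{itemize}
\item A relation $x<_{\mathcal{P}}y$ derived through $c\lessdot d$ has $x\leq c$ and $y\geq d$. Hence $x\in\{a,c\}$, and either $y=d$ or $y\geq e$.
\item All of these relations except $c<d$ persist in $\mathcal{Q}$, via $a\lessdot b\lessdot d$ or via $c\lessdot e$.
\item In particular, the pairs $(c,z)$ with $z\geq e$ stay comparable, contrary to your remark.
\end{itemize}
This gives exhaustiveness of your classification immediately. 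It also yields $[c,z]_{\mathcal{P}}=[c,z]_{\mathcal{Q}}\sqcup\{d\}$ for $z\geq e$.

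Now suppose $[c,z]_{\mathcal{Q}}$ is a chain. Every element other than $c$ is comparable to $e$ and cannot lie strictly between $c$ and $e$, since $c\lessdot e$ in $\mathcal{Q}$. So each such element is $\geq e>d$, and inserting $d$ keeps a chain. Conversely, deleting $d$ from a chain in $\mathcal{P}$ leaves a chain, since no other comparability changes.

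This is the content of the paper's observation that all elements above $d$ are $\geq e$. That observation uses only the valid half of your claim.
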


\begin{proof}
    It suffices to consider linear intervals other than the chain $a \lessdot b \lessdot d$ and the cover relation $c \lessdot d$. Since $a$ is a minimal element and there does not exist an element $y$ such that $d \lessdot y$, the only linear interval with elements $a, b$ and $d$ is $a \lessdot b \lessdot d$ in $\mathcal{Q}$. 
    
    Otherwise, consider linear intervals that contain the cover relation $c \lessdot d$. There does not exist an element $x$ such that $x \lessdot c$. So, we have the linear intervals $c \lessdot d \lessdot e \lessdot \dots$ in $\mathcal{P}$, which are of the form $c \lessdot e \lessdot \dots$ in $\mathcal{Q}$, and vice versa. Since linear intervals $d \lessdot e \lessdot \dots$ are in $\mathcal{P}$ and in $\mathcal{Q}$, the numbers of linear intervals $c \lessdot d \lessdot e \lessdot \dots$ in $\mathcal{P}$ and linear intervals $c \lessdot e \lessdot \dots$ in $\mathcal{Q}$ are the same. It remains to check that there are no other linear intervals with $[c, e]$. 

    \begin{figure}[htp]
        \centering
        \includegraphics[width=0.35\linewidth]{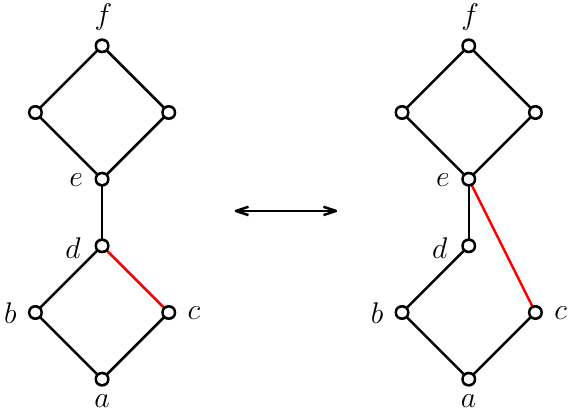}
    \end{figure}
    
    Consider an interval $[c, f]$. If $[c, f]$ is not linear in $\mathcal{P}$, then there exist at least two elements in $[c, f]$ that are incomparable as shown in the figure above. Observe that all elements bigger than $d$ are bigger than or equal to $e$. We have $c \lessdot d \lessdot e \leq f$ in $\mathcal{P}$ and $c \lessdot e \leq f$ in $\mathcal{Q}$. So, $[c, f]$ is not linear in $\mathcal{Q}$, and the converse holds. It follows that linear intervals with the cover relation $c \lessdot e$ are exactly $c \lessdot e \lessdot \dots$ in $\mathcal{Q}$. Since the number of such linear intervals is the same as the number of linear intervals $c \lessdot d \lessdot e \lessdot \dots$ in $\mathcal{P}$, $\mathcal{P}$ and $\mathcal{Q}$ have the same number of linear intervals. The proposition holds as claimed. 
\end{proof}

For $\mathcal{P}$ and $\mathcal{Q}$ to have the same number of linear intervals, it suffices that the number of linear intervals that are only in $\mathcal{P}$ is the same as the number of linear intervals that are only in $\mathcal{Q}$. So, we look for linear intervals that are in $\mathcal{P}$ or $\mathcal{Q}$ but not both. By observation, we have linear intervals $\dots$$\lessdot c \lessdot d \lessdot y \lessdot \dots$ in $\mathcal{P}$ but not in $\mathcal{Q}$, and linear intervals $\dots$$\lessdot a \lessdot b \lessdot d \lessdot \dots$ in $\mathcal{Q}$ but not in $\mathcal{P}$. Note that linear intervals $\dots$$c \lessdot d \lessdot y \lessdot \dots$ are exactly those in $\mathcal{P}$ but not in $\mathcal{Q}$. Indeed, since $c \lessdot d$ is in $\mathcal{P}$ and not in $\mathcal{Q}$, linear intervals without $c \lessdot d$ are either in both $\mathcal{P}$ and $\mathcal{Q}$ or only in $\mathcal{Q}$. Thus, we need only find linear intervals that are in $\mathcal{Q}$ but not in $\mathcal{P}$.

\begin{figure}[htp]
    \centering
    \includegraphics[width=0.8\linewidth]{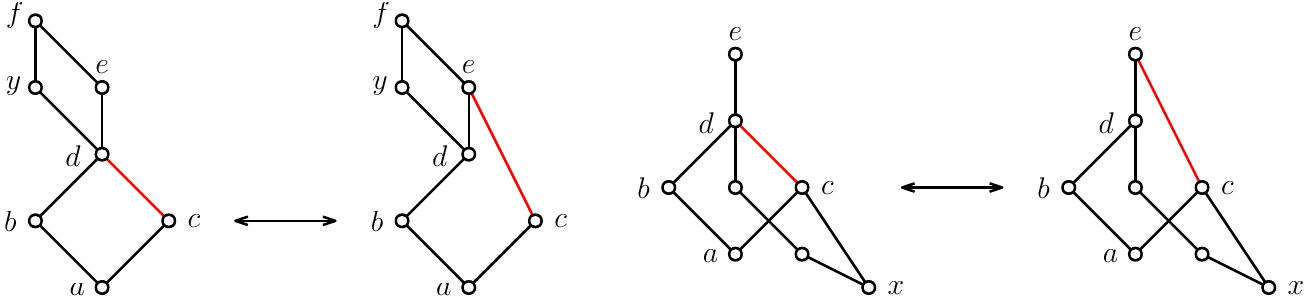}
    \caption{Two examples of $\mathcal{P}$ and $\mathcal{Q}$; each with an interval that is nonlinear in $\mathcal{P}$ but linear in $\mathcal{Q}$.}
    \label{fig:ce}
\end{figure}

Consider an interval that is nonlinear in $\mathcal{P}$ but linear in $\mathcal{Q}$. Such an interval has exactly two maximal chains in $\mathcal{P}$ and at least one contains the cover relation $c \lessdot d$. Furthermore, it has exactly one maximal chain in $\mathcal{Q}$ since it is linear. For $\mathcal{P}$ and $\mathcal{Q}$ in \Cref{fig:ce}, observe that intervals $[x, d]$ and $[c, f]$ are nonlinear in $\mathcal{P}$ but linear in $\mathcal{Q}$. In both instances, a maximal chain containing $c \lessdot d$ is removed or shortened, e.g., $c \lessdot d \lessdot e \lessdot f$ to $c \lessdot e \lessdot f$. It follows that we have linear intervals $\dots$$\lessdot x \lessdot \dots \lessdot d \lessdot \dots$ and $\dots$$\lessdot c \lessdot e \lessdot f \lessdot \dots$, where $e \lessdot f$ in $\mathcal{Q}$ but not in $\mathcal{P}$. Since there are no other linear intervals in $\mathcal{Q}$ but not in $\mathcal{P}$, we now have all of the linear intervals that are either in $\mathcal{P}$ or in $\mathcal{Q}$. As a result, we obtain the following lemma, which is a more general version of \Cref{prop:same_lin_1}.

\begin{lemma}[Posets with the same number of linear intervals, II]
\label{lem:same_lin}
    Let $\mathcal{P}$ be a poset with an interval isomorphic to $\mathrm{Dyck}_3$, and $\mathcal{Q}$ the refinement of $\mathcal{P}$ obtained by refining the interval $\mathrm{Dyck}_3$ in $\mathcal{P}$ to $\mathrm{Tam}_3$. Consider elements $a, b, c, d$ and $e$ in these intervals as labeled in \Cref{fig:refine}. Let
    \begin{itemize}
        \item $p$ be the number of linear intervals $\dots$$\lessdot c \lessdot d \lessdot y \lessdot \dots$ in $\mathcal{P}$, where $y \neq e$ and $d \lessdot y$, or $\dots$$\lessdot c \lessdot d$ when there does not exist a $y$,
        \item $q_1$ be the number of linear intervals $\dots$$\lessdot a \lessdot b \lessdot d \lessdot \dots$ in $\mathcal{Q}$, 
        \item $q_2$ be the number of linear intervals $\dots$$\lessdot x \lessdot \dots \lessdot d \lessdot \dots$ in $\mathcal{Q}$, where $x \neq a$ and $x \lessdot c$, and
        \item $q_3$ be the number of linear intervals $\dots$$\lessdot c \lessdot e \lessdot f \lessdot \dots$ in $\mathcal{Q}$, where $e \lessdot f$.
    \end{itemize}
    $\mathcal{P}$ and $\mathcal{Q}$ have the same number of linear intervals if and only if $p = q_1 + q_2 + q_3$ where $p, q_1 \geq 1$ and $q_2, q_3 \geq 0$. 
\end{lemma}

\begin{remark}[On other intervals]
    The reasoning in \Cref{prop:same_lin_1} and \Cref{lem:same_lin} can be used for other intervals besides the Dyck and Tamari intervals (see the figure below), which we do not consider in this paper. We omit the double-sided arrows moving forward.
    \begin{figure}[htp]
        \centering
        \includegraphics[width=0.65\linewidth]{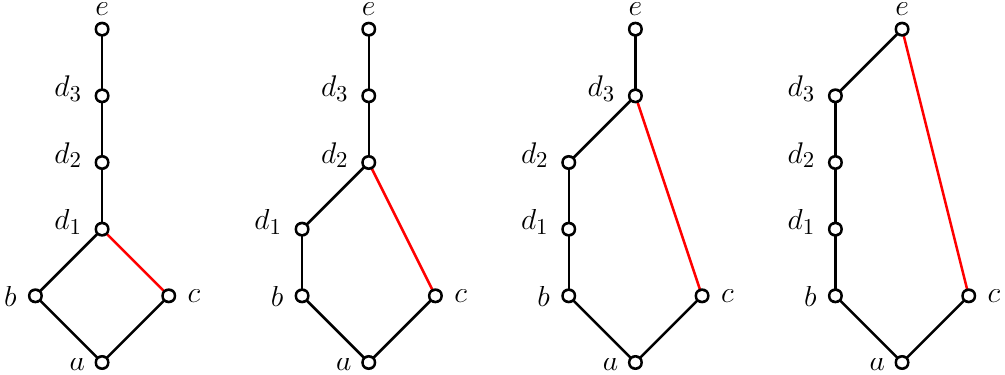}
        \caption{A small example of lattices with the same number of linear intervals.}
    \end{figure}
\end{remark}

Our goal with \Cref{lem:same_lin} is to give a universal approach to extending or refining any poset $\mathcal{P}$ or $\mathcal{Q}$. However, in practice, it may be overly tedious to check that $p = q_1 + q_2 + q_3$. In an effort to convince the reader that our approach can be useful and interesting (for some posets $\mathcal{P}$ and $\mathcal{Q}$), we use \Cref{lem:same_lin} to find lattices with the same number of linear intervals as the Dyck lattice. In~\cite{L2}, we study ways to circumvent some of the counting in \Cref{lem:same_lin}. For this paper, we briefly describe one of such ways via lattices embedded into the grid. Besides further directions, the rest of \Cref{sec:alt} is dedicated to explaining how we obtain the lattices in \Cref{fig:samples} starting from $\mathrm{Dyck}_4$. We use these lattices to motivate the definition of altitude lattices, a generalization of the alt-Tamari lattices for well-chosen distributive lattices, even in the case of the Dyck lattice. These lattices are the primary examples where our approach is most fruitful. Notably, they have as many Dyck and Tamari intervals as possible. 

\begin{figure}[htp]
    \centering
    \includegraphics[width=0.65\linewidth]{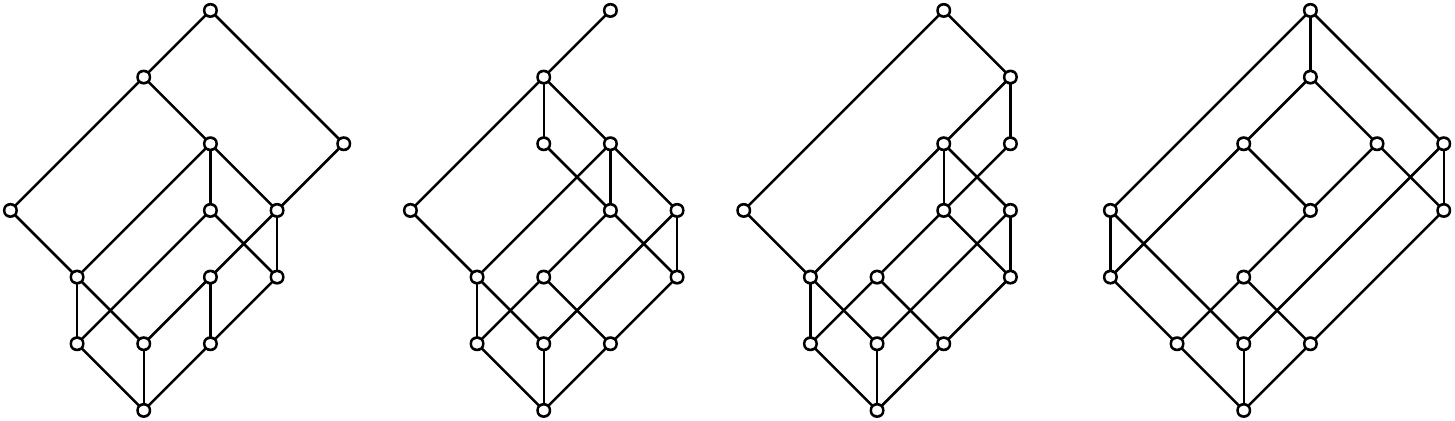}
    \caption{The Hasse diagrams of some lattices with the same number of linear intervals as the Dyck lattice (for $n=4$).}
    \label{fig:samples}
\end{figure}

\subsection{Polygonal lattices in the grid}
\label{subsec:alt}

Formally, the lattices in \Cref{fig:samples} are \textit{polygonal} lattices in the sense of Reading~\cite{Rea2}. Intuitively and visually, there are as many polygons as possible (see the definition below). A familiar example is the Boolean lattice, the $1$-skeleton of the hypercube, where every polygon is a square. 

\begin{definition}[Polygonal lattice, Definition 9-6.1 in~\cite{Rea2}]
    A \emph{polygon} in a lattice $\mathcal{L}$ is an interval $[x, y]$ that is the union of two ﬁnite maximal chains from $x$ to $y$, with these chains disjoint except at $x$ and $y$. A lattice $\mathcal{L}$ is called \emph{polygonal} if the following two dual conditions hold:
    \begin{itemize}
        \item[(i)] If distinct elements $y_1$ and $y_2$ cover an element $x$, then $[x, y_1 \vee y_2]$ is a polygon.
        \item[(ii)] If an element $y$ covers distinct elements $x_1$ and $x_2$, then $[x_1 \wedge x_2, y]$ is a polygon.
    \end{itemize}
\end{definition}

Since the Dyck and Tamari intervals have a square and a pentagon, respectively, we want polygonal lattices with as many squares and pentagons as possible. As such, the most natural starting point is to take a distributive lattice where every polygon is necessarily a square. 

By a classic theorem of Dilworth~\cite{D}, we can embed distributive lattices into the grid. Furthermore, such an embedding is isometric, i.e., where every cover relation corresponds to a line segment of unit length. We use the grid as guide to go from $\mathrm{Dyck}_4$ to $\mathrm{Tam}_4$ as shown in \Cref{fig:tam}. For now, consider the figure below.

\begin{figure}[htp]
    \centering
    \includegraphics[width=0.5\linewidth]{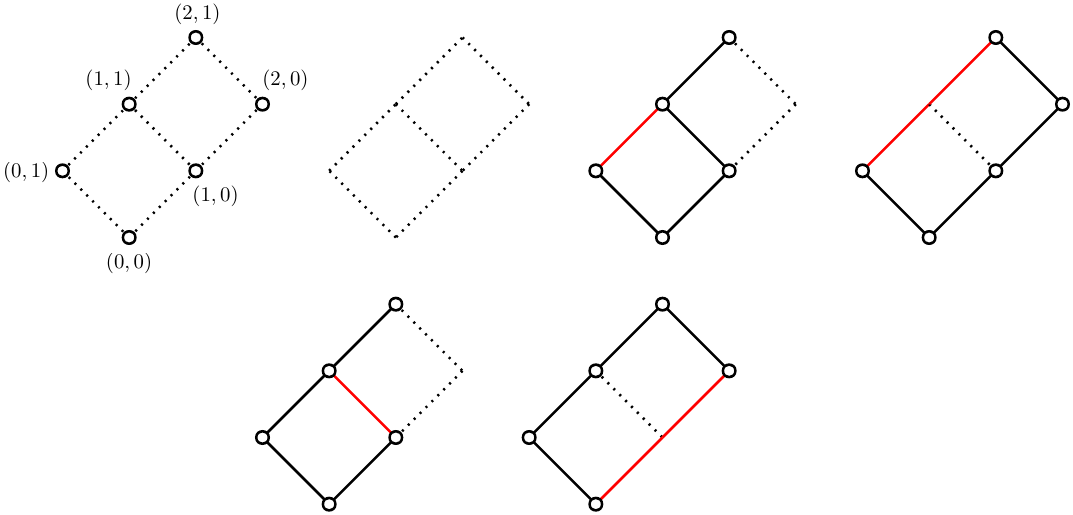}
    \caption{Smallest examples of $\mathcal{P}$ and $\mathcal{Q}$ embedded in the grid $\mathbb{Z}_{\geq 0}^{2}$.}
    \label{fig:grid}
\end{figure} 

Geometrically, observe that a cover relation corresponds to a line segment along an axis. Furthermore, the height $h(x_\downarrow)$ for any element $x$ is invariant under the refinements and extensions of $\mathcal{P}$ and $\mathcal{Q}$, respectively. In certain cases, $h(x_\downarrow)$ is the sum of coordinates of $x$ in the grid.

We mentioned earlier that they are ways of refining $\mathcal{P}$ or extending $\mathcal{Q}$ that circumvent some of the counting in \Cref{lem:same_lin}. One that is particularly worth highlighting is when $\mathcal{P}$ and $\mathcal{Q}$ have intervals that are products of Dyck and Tamari intervals, respectively. Consider the figure below.

\begin{figure}[htp]
    \centering
    \includegraphics[width=0.3\linewidth]{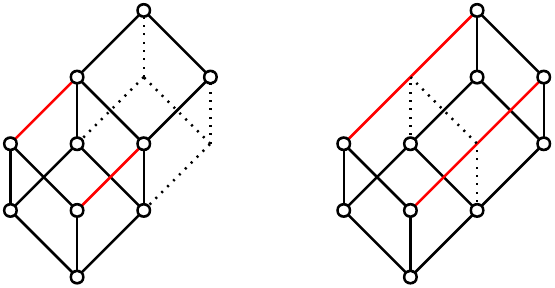}
    \caption{An extension/refinement forcing another in the grid embedding.}
    \label{fig:grid_move}
\end{figure}

By construction, we want $\mathcal{P}$ and $\mathcal{Q}$ to always be embeddable into the grid as described earlier with \Cref{fig:grid}. If $\mathcal{P}$ and $\mathcal{Q}$ are products of Dyck or Tamari intervals, respectively, then the extension/refinement of an interval forces the same extension/refinement for each interval in the product. Note that $\mathcal{P}$ and $\mathcal{Q}$ have the same number of linear intervals. For the sake of discussion, let us ignore the grid embedding and allow the extension/refinement of an interval in a product. 

\begin{figure}[htp]
    \centering
    \includegraphics[width=0.3\linewidth]{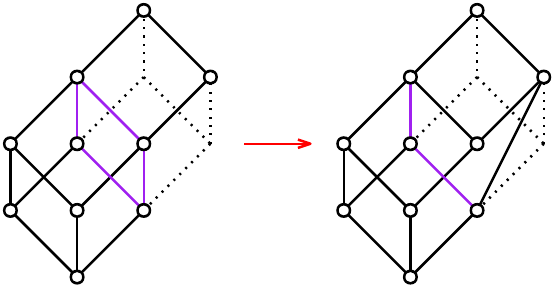}
    \caption{$\mathcal{P}$ has less linear intervals than $\mathcal{Q}$.}
    \label{fig:grid_counter}
\end{figure}

Consider the figure above. Observe that the purple interval is nonlinear in $\mathcal{P}$ but is linear in $\mathcal{Q}$. $\mathcal{P}$ has $4$ linear intervals that are $3$ element chains while $\mathcal{Q}$ has $5$ of such linear intervals. This is an example where the grid allow us to ignore some undesirable cases as they are not realizable by construction. In general, it allow us to ignore linear intervals in products of Dyck and Tamari intervals and introduces the idea of using multiple refinements/extensions. As a final note before the examples, there are many ways to embed a poset into the grid. For example, we use the following two embeddings of the Dyck lattice, which we take for granted in this paper.     

\begin{figure}[htp]
    \centering
    \includegraphics[width=0.45\linewidth]{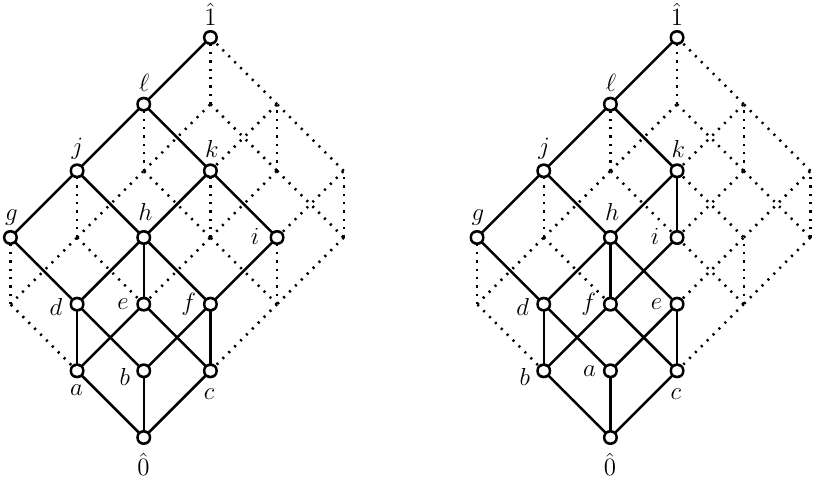}
    \caption{Two embeddings of the Dyck lattice in $\mathbb{Z}_{\geq 0}^{3}$.}
    \label{fig:drawing_dyck}
\end{figure}

\subsection{Extensions and refinements of the Dyck lattice in the grid}
\label{subsec:grid}

Now that we have the necessary setup, we focus on two examples. In each example, we refine the Dyck lattice to get lattices with the same number of linear intervals. Each sequence of refinements stops whenever it reaches a ``largest'' lattice (we use ``largest'' in the geometric sense, i.e., convex hull). For each lattice, we list their linear intervals that are $3$ or $4$-element chains since refining $\mathcal{P}$ or extending $\mathcal{Q}$ does not change the numbers of elements and cover relations. Then, we briefly explain the reasoning behind each refinement with some maps between linear intervals with $3$ or $4$ elements.

\begin{example}[Refining the Dyck lattice, I]
\label{exm:refine_dyck_1}

For the first example, we show a sequence of refinements that ends with the Tamari lattice.

    \begin{figure}[htp]
        \centering
        \includegraphics[width=0.85\linewidth]{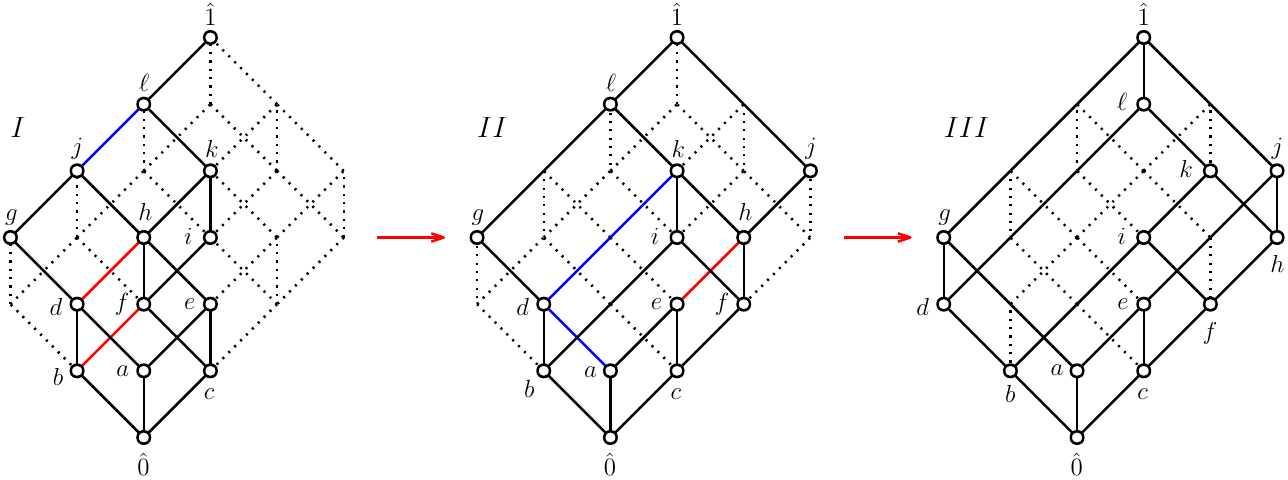}
        \caption{From the Dyck lattice to the Tamari lattice (for $n=4$). $A.I/B.I$ and $A.III/B.IV$ are the Dyck and Tamari lattices, respectively.}        
        \label{fig:dyck_to_tam}
    \end{figure}

Below is the list of $14$ linear intervals for each lattice; $12$ linear intervals are $3$ element chains and $2$ linear intervals are $4$ element chains. We recommend that the reader note the differences in linear intervals.

    \begin{itemize}
        \item[$I$.] The $3$-element linear intervals are $[a, g]$, $[b, i]$, $[b, g]$, $[c, i]$, $[d, k]$, $[e, j]$, $[e, k]$, $[f, j]$, $[g, \ell]$, $[i, \ell]$, $[j, \hat{1}]$, $[k, \hat{1}]$, and the $4$-element linear intervals are $[g, \hat{1}]$ and $[i, \hat{1}]$. 
        \item[$II$.] The $3$-element linear intervals are $[\hat{0}, f]$, $[a, g]$, $[a, h]$, $[b, g]$, $[c, i]$, $[e, j]$, $[e, k]$, $[f, j]$, $[g, \hat{1}]$, $[i, \ell]$, $[h, \ell]$, $[k, \hat{1}]$, and the $4$-element linear intervals are $[a, j]$ and $[i, \hat{1}]$.
        \item[$III$.] The $3$-element linear intervals are $[\hat{0}, d]$, $[\hat{0}, f]$, $[a, j]$, $[b, g]$, $[b, k]$, $[c, h]$, $[c, i]$,$[e, \hat{1}]$, $[f, j]$, $[i, \ell]$, $[h, \ell]$, $[k, \hat{1}]$, and the $4$-element linear intervals are $[\hat{0}, h]$ and $[i, \hat{1}]$.
    \end{itemize}

The reasoning behind each refinement is as follows:

    \begin{itemize}
        \item[$I \rightarrow II$.]
        \begin{itemize}
            \item[(i)] Consider $[\hat{0}, i]$, $[a, k]$, and $[h, \hat{1}]$ in both $A.I$ and $A.II$. Note that $k_\downarrow$ is the product of two Dyck intervals which explains the refinements of $[\hat{0}, i]$ and $[a, k]$. For $[h, \hat{1}]$, see the reasoning below. We have $[b, i]_I \rightarrow [\hat{0}, f]_{II}$, $[d, k]_I \rightarrow [a, h]_{II}$, and $[j, \hat{1}]_I \rightarrow [h, \ell]_{II}$.
            \item[(ii)] The refinement of $[h, \hat{1}]$ is forced by the refinements in $k_\downarrow$, which is why the cover relation $j \lessdot \ell$ is colored blue. The key observation here is that $h$ also moves $j$ with it in the grid. Hence, we also have $[g, \hat{1}]_{I} \rightarrow [a, j]_{II}$.
        \end{itemize}  
        \item[$II \rightarrow III$.]
        \begin{itemize}
            \item[(i)] The refinement of $[c, j]$ forces the refinements of $[\hat{0}, g]$ and $[b, \ell]$ through $d$ and $k,\ell$ respectively. The refinement of $[b, \ell]$ is impossible without the refinement of $[\hat{0}, g]$.   
            \item[(ii)] Due to multiple refinements, the maps for linear intervals are more complex. We list the simple ones first: $[e, j]_{II} \rightarrow [c, h]_{III}$, $[a, h]_{II} \rightarrow [\hat{0}, h]_{III}$, and $[b, g]_{II} \rightarrow [\hat{0}, d]_{III}$.
            \item[(iii)] Consider $[e, k]_{II}, [d, \ell]_{II}$ and $[g, \hat{1}]_{II}$. The refinement of $[c,j]$ results in $e$ and $k$ being incomparable. Thus, we lost the linear interval $[e, k]_{II}$. In return, $[e, \hat{1}]_{III}$ is linear. So, we have $[e, k]_{II} \rightarrow [e, \hat{1}]_{III}$. For the remaining intervals, $[d, \ell]_{II}$ is not linear and we do not have $[d, \ell]_{II} \rightarrow [b, k]_{III}$. However, observe that $[d, \ell]_{II}$ and $[g, \hat{1}]_{II}$ are in the interval $[d, \hat{1}]_{II}$. So, we replace $[d, \ell]_{II}$ with $[g, \hat{1}]_{II}$ to get $[g, \hat{1}]_{II} \rightarrow [b, k]_{III}$.   
        \end{itemize}
    \end{itemize}
\end{example}

\begin{example}[Refining the Dyck lattice, II]
\label{exm:refine_dyck_2}

For the second example, we show a sequence of refinements that ends with a Cambrian lattice.

\begin{figure}[!ht]
        \centering
        \includegraphics[width=1\linewidth]{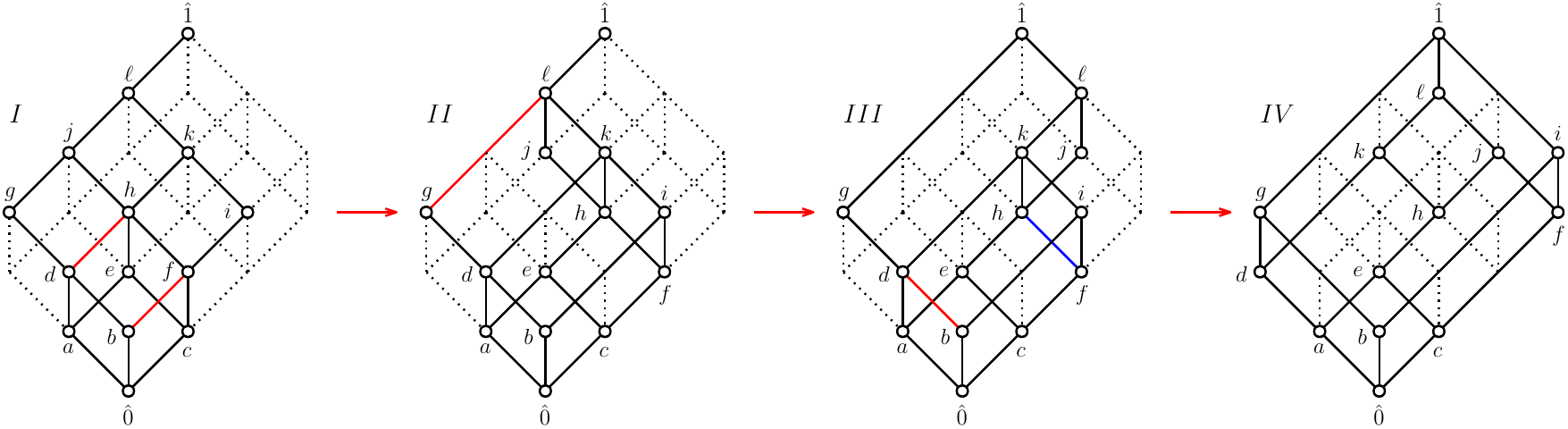}
        \caption{From the Dyck lattice to a Cambrian lattice (see Figure $7$ in~\cite{Rea1}).}
        \label{fig:camb}
    \end{figure}
\end{example}

Below is the list of $14$ linear intervals for each lattice; $12$ linear intervals are $3$-element chains and $2$ linear intervals are $4$-element chains. 

\begin{itemize}
    \item[$I$.] The $3$-element linear intervals are $[a, g]$, $[b, g]$, $[b, i]$, $[c, i]$, $[d, k]$, $[e, j]$, $[e, k]$, $[f, j]$, $[g, \ell]$, $[i, \ell]$, $[j, \hat{1}]$, $[k, \hat{1}]$, and the $4$-element linear intervals are $[g, \hat{1}]$ and $[i, \hat{1}]$.
    \item[$II$.] The $3$-element linear intervals are $[\hat{0}, f]$, $[a, g]$, $[a, h]$, $[b, g]$, $[c, i]$, $[e, j]$, $[e, k]$, $[f, j]$, $[g, \hat{1}]$, $[i, \ell]$, $[j, \hat{1}]$, $[k, \hat{1}]$, and the $4$-element linear intervals are $[a, j]$ and $[i, \hat{1}]$.
    \item[$III$.] The $3$-element linear intervals are $[\hat{0}, f]$, $[a, g]$, $[a, h]$, $[b, g]$, $[c, i]$, $[d, \ell]$, $[e, j]$, $[e, k]$, $[f, j]$, $[i, \ell]$, $[j, \hat{1}]$, $[k, \hat{1}]$, and the $4$-element linear intervals are $[a, j]$ and $[i, \hat{1}]$.
    \item[$IV$.] The $3$-element linear intervals are $[\hat{0}, d]$, $[\hat{0}, f]$, $[a, g]$, $[a, h]$, $[c, h]$, $[c, i]$, $[d, \ell]$, $[e, j]$, $[e, k]$, $[f, \ell]$, $[j, \hat{1}]$, $[k, \hat{1}]$, and the $4$-element linear intervals are $[a, j]$ and $[c, k]$.
\end{itemize}

The reasoning behind each refinement is as follows:

\begin{itemize}
    \item[$I \rightarrow II$.] Note the similarity with $I. \rightarrow II.$ from the previous example. The main difference here is the drawing of $\mathrm{Dyck}_4$, which in this case does not force the refinement of any interval outside of $k_\downarrow$. The maps between linear intervals are $[b, i]_I \rightarrow [\hat{0}, f]_{II}$, $[d, k]_I \rightarrow [a, h]_{II}$, $[g, \ell]_I \rightarrow [g, \hat{1}]_{II}$, and $[g, \hat{1}]_{I} \rightarrow [a, j]_{II}$. 
    \item[$II \rightarrow III$.] Note that $[g, \hat{1}]$ is the only linear interval with $[g, \hat{1}]$ in $II$ and there does not exist a linear interval with $[d, \ell]$ in $II$. This is the simplest case of refinement where we only have $[g, \hat{1}]_{II} \rightarrow [d, \ell]_{III}$.
    \item[$III \rightarrow IV$.] 
    \begin{itemize}
        \item[(i)] Geometrically, observe that $[d, \ell]$ lie along the same line segment. So, the refinement of $g_\downarrow$ result in $i$ being incomparable to $k$ and $\ell$ in $IV$. Note that we have $i \lessdot 1$ in $IV$ and $f \lessdot i$. So, informally speaking, the embedding into the grid forces $f \lessdot i$ to move up. As a result, we have the refinement of $[c, j]$.
        \item[(ii)] We have $[b, g]_{III} \rightarrow [\hat{0}, d]_{IV}$ in $g_\downarrow$ and $[f, j]_{III} \rightarrow [c, h]_{IV}$ in $[c, j]$. Otherwise, we have $[i, \ell]_{III} \rightarrow [j, \ell]_{IV}$ and $[i, \hat{1}]_{III} \rightarrow [c, k]_{IV}$. 
    \end{itemize}
\end{itemize}

\begin{remark}[On other lattices]
    There are other lattices that refine $\mathrm{Dyck}_4$ such that the number of linear intervals remains the same. For general $n$, we do not know how many of these lattices there are. Furthermore, we do not know of a procedure for obtaining Cambrian lattices using the Dyck lattice. \Cref{exm:refine_dyck_1} and \Cref{exm:refine_dyck_2} are here simply to show that these lattices exist.
\end{remark}

\subsection{Altitude lattices and their skeletal posets}

Recall from \Cref{thm:dtam} that the alt-Tamari lattices are anatomically related by \Cref{def:anatomic}. Given the lattices $\mathcal{L} \neq \mathrm{Dyck}_4$ in \Cref{exm:refine_dyck_1} and \Cref{exm:refine_dyck_2}, observe that $h(x_\downarrow)_\mathcal{L} = h(x_\downarrow)_{\mathrm{Dyck}_4}$. For visual clarity, consider the figures below.

\begin{figure}[htp]
    \centering
    \includegraphics[width=0.6\linewidth]{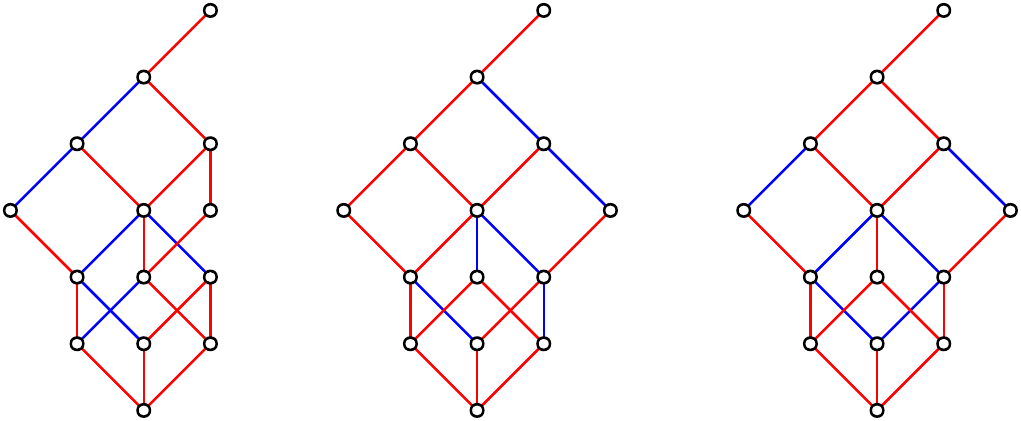}
    \caption{The Dyck lattice as a poset induced by embeddings of skeletal posets. \textcolor{red}{Cover relations} in \textcolor{red}{red} are those of the skeletal posets, and \textcolor{blue}{cover relations} in \textcolor{blue}{blue} are induced by the embeddings. The two leftmost embeddings come from the same skeletal poset.}
    \label{fig:dyck_embeddings}
\end{figure}

\begin{figure}[htp]
    \centering
    \includegraphics[width=0.8\linewidth]{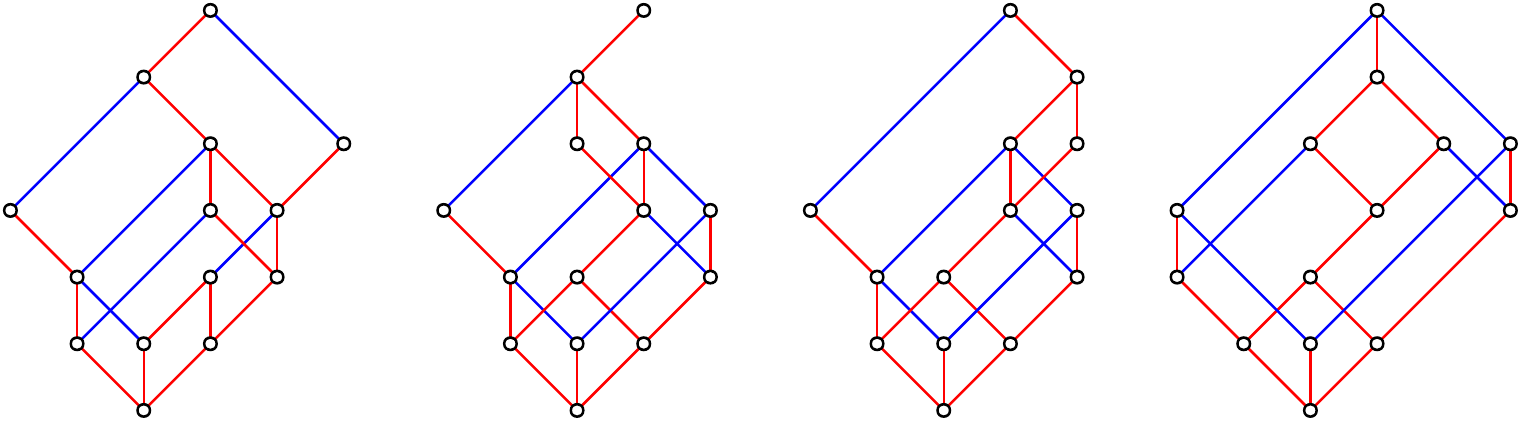}
    \caption{Embeddings of skeletal posets of the ``largest'' altitude lattices. \textcolor{red}{Cover relations} in \textcolor{red}{red} are those of the skeletal posets, and \textcolor{blue}{cover relations} in \textcolor{blue}{blue} are induced by the embeddings.}
    \label{fig:samples_sk}
\end{figure}

Every lattice in the above figures is realizable as a poset induced by embeddings of some skeletal posets, namely, the skeletal posets of the ``largest" lattices. Motivated by alt-Tamari lattices, we say that these lattices belong to the family of altitude lattices determined by the Dyck lattice. In turn, we are led to the following definition.

\begin{definition}[Altitude lattices]
\label{def:altitude_lattices}
    A family of \emph{altitude lattices} is a set of anatomically related lattices $\mathcal{L}$ with the property that any lattice in the set refine the same distributive lattice. 
\end{definition}

By \Cref{def:anatomic}, altitude lattices are also related via their skeletal posets. As seen in the previous figures, skeletal posets are, in the graph-theoretic sense, the largest spanning (directed) subgraph that many altitude lattices have in common. Within the context of extensions/refinements, skeletal posets are a useful tool for checking whether a particular altitude lattice extends/refines another. 

\begin{remark}[On \Cref{lem:same_lin} and skeletal posets]
    Note that the refinement of an interval isomorphic to $\mathrm{Dyck}_3$ in \Cref{exm:refine_dyck_1} and in \Cref{exm:refine_dyck_2} does not change the height $h(x_\downarrow)$ for any element $x$. In general, $h(x_\downarrow)$ is an invariant under refinements of Dyck intervals $\mathrm{Dyck}_3$ (resp., extensions of Tamari intervals $\mathrm{Tam}_3$). So, whenever we use \Cref{lem:same_lin}, the skeletal posets of altitude lattices act, informally, as records of cover relations that change or remain the same. Thus, if the reader is interested in using \Cref{lem:same_lin} for specific lattices, it may be of interest to study their skeletal posets.       
\end{remark} 

\subsection{Lattices and some extremal problems} 

While studying how embeddings of lattices in the grid interact with extensions/refinements that does not change the number of linear intervals in these lattices, we came across $2$ extremal problems of independent interest. 

One of these problems is already present in \Cref{exm:refine_dyck_1} and in \Cref{exm:refine_dyck_2}. Notably, it is a problem concerning the number of refinements starting from the Dyck lattice and, generally, a distributive lattice. Formally, the setup is as follows:

Let $\lambda = (\lambda_i)_{i \in \mathbb{N}}$ be a partition of $n$, i.e., $\sum_{i \in \mathbb{N}} \lambda_i = n$, where $n$ is the height of a distributive lattice $\mathcal{L}$. For any $\mathcal{L}$, there exist partitions $\lambda$ such that $\mathcal{L}$ can be embedded into a product of chains where the height of each chain is $\lambda_i$. For example, we have the partition $\lambda = (n-1, n-2, \dots, 1)$ for $\mathrm{Dyck}_n$. Without loss of generality, let $(\lambda_i)_{i \in \mathbb{N}}$ be a nonincreasing sequence. For distributive lattices $\mathcal{L}$ that are not chains or product of chains, fix a partition $\lambda$ such that $\mathcal{L}$ can be embedded into a product of chains where the height of each chain is $\lambda_i$. Consider the problem below.

\begin{problem}[The number of refinements of a distributive lattice]
    Find the minimal (resp., maximal) number of refinements starting from $\mathcal{L}$, as in \Cref{exm:refine_dyck_1} and \Cref{exm:refine_dyck_2}, such that the lattices at each refinement are embeddable into the product of chains recorded by $\lambda$, and the last refinement results in a ``largest'' lattice in that product. We use ``largest'' in the geometric sense and also to emphasize that the lattice need not be the coarsest, i.e., most refined lattice.         
\end{problem}

With the above problem in mind, recall that for $n=4$, there are $4$ alt-Tamari lattices. Notably, there are $2$ alt-Tamari lattices that are not $\mathrm{Dyck}_4$ or $\mathrm{Tam}_4$. Since neither of these $2$ lattices extends or refines the other, there exist two sequences of refinements from $\mathrm{Dyck}_4$ to $\mathrm{Tam}_4$ where every lattice is an alt-Tamari lattice. In fact, we suspect that the lattice $II.$ in \Cref{exm:refine_dyck_1} is an alt-Tamari lattice. Furthermore, it seems that the sequences coming from alt-Tamari lattices are minimal. We do not explore this direction in~\cite{L2} but highlight it here for the interested reader.    

Our second extremal problem focuses on embeddings of skeletal posets. Consider the figure below.

\begin{figure}[htp]
    \centering
    \includegraphics[width=0.65\linewidth]{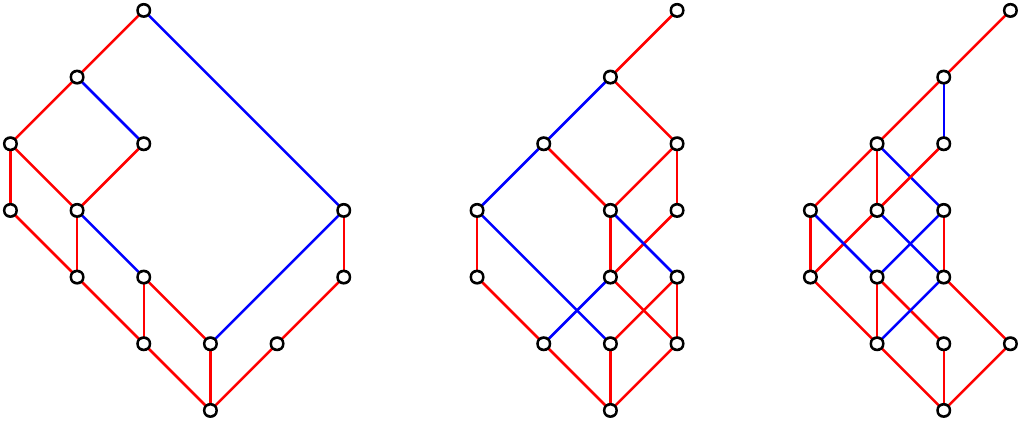}
    \caption{Embeddings of $\mathrm{SK}(\mathrm{Tam}_4)$ with $4, 5$ and $6$ induced \textcolor{blue}{cover relations} in \textcolor{blue}{blue} from left to right, respectively.}
    \label{fig:extremal}
\end{figure}

With the same setting as before, we consider a random embedding of a skeletal poset into the product of chains recorded by $\lambda$. Geometrically, recall that cover relations in this embedding correspond to line segments along some axes. For such an embedding, it is natural to count the minimal and maximal number of induced cover relations. As such, we suggest the following problem.

\begin{problem}[The number of induced cover relations]
    Consider a random embedding of a skeletal poset (of a ``largest'' lattice) in the product of chains recorded by $\lambda$. Let $c$ be the number of induced cover relations. Find the minimal (resp., maximal) value of $c$. In addition, does there exist a lattice for every value of $c$? And is there a $c$ such that the induced poset is always a lattice?   
\end{problem}

\section{Kneser graphs and reconstruction?}
\label{sec:kneser}

Unlike \Cref{sec:sk_tam} and \Cref{sec:alt}, this section is a very short note on Kneser graphs with very little original input. That is, we do not have new results on Kneser graphs. Instead, we give a definition of Kneser graphs for posets (with $\hat{0}$). Besides the obvious proposal to study the properties of these graphs, we suggest a direction of research with regards to poset reconstruction. For the sake of formality, let us recall the classic Kneser graphs and our definition of Kneser graphs. 

\begin{definition}[Kneser graphs]
\label{def:kneser}
    The \emph{Kneser graph} $KG_{n,k}$ has as vertices the $k$-element subsets of $[n]$, and two subsets are adjacent whenever they are pairwise disjoint. 
\end{definition}

\begin{definition}[Kneser graphs for posets with $\hat{0}$]
\label{def:kneser_poset}
    The \emph{Kneser graph} $KG(k)$ of a poset $\mathcal{P}$ with $\hat{0}$ has as vertices the elements $x$ where $h(x_\downarrow) = k$ for $1 \leq k \leq h(\mathcal{P})$ and two elements are adjacent whenever the intersection of their down sets is $\hat{0}$.
\end{definition}

\begin{remark}[On Kneser graphs for posets]
    Surprisingly, we could not find our definition of Kneser graphs in the literature. As such, to the best of our knowledge, these graphs have not been studied even for some well-known lattices, e.g., the Noncrossing partition lattice. 
\end{remark} 

Recall that the Boolean lattice is defined as the subsets of $[n]$ ordered by inclusion. Thus, our definition, i.e., \Cref{def:kneser_poset}, for the Boolean lattice recovers \Cref{def:kneser}. Moreover, it is straightforward to generalize \Cref{def:kneser_poset} by using more parameters, which may include other generalizations of Kneser graphs.

\begin{definition}[Generalized Kneser hypergraphs for posets]
\label{def:kneser_hypergraph_poset}
    The \emph{generalized Kneser hypergraph} $KG(r, s, k)$ of a poset $\mathcal{P}$ has as vertices the elements $x$ where $h(x_\downarrow) = k$ for $1 \leq k \leq h(\mathcal{P})$ and $r$ elements are adjacent whenever the height of the intersection of their down sets is at most $s \leq k - 1$.
\end{definition}

It is assumed in \Cref{def:kneser_hypergraph_poset} that $r$ is at most the maximal number of elements $x$ with $h(x_\downarrow) = k$. For $r=2, s=0$, we get \Cref{def:kneser_poset}. Later in the section, we note how the generalized Kneser hypergraphs of a poset $\mathcal{P}$ keep track of the ``disjointness'' of chains in $\mathcal{P}$. First, we give a brief history and motivation for Kneser graphs. 

Kneser graphs date back to Kneser's conjecture on their chromatic number. Explicitly, for the Kneser graph $KG_{n,k}$ with $n \geq 2k$, its chromatic number is $n - 2k + 2$. Kneser's conjecture was proved by Lovász using the Borsuk-Ulam theorem. It is a celebrated proof that led to topological methods in Combinatorics, or Topological Combinatorics (for further reading, we recommend the survey~\cite{DM}). Another noteworthy parameter of Kneser graphs is their independence number, defined as the maximum size of a set of pairwise nonadjacent vertices. Explicitly, it is the maximal number of $k$-element subsets such that their pairwise intersection is nonempty, which for $n \geq 2k$, is $\binom{n-1}{k-1}$. It is better known as the Erdös-Ko-Rado theorem (for further reading, we recommend the book~\cite{GM}). Finally, it was conjectured in the 1970s that Kneser graphs are Hamiltonian (except the Petersen graph or $KG_{5,2}$), i.e., they contain a cycle that visits every vertex exactly once. It is now a theorem due to Merino, Mütze, and Namrata~\cite{MMN}. In summary, the parameters of Kneser graphs encode some important results in Combinatorics (in particular, Extremal Combinatorics). 

With the above in mind, it is of interest to study ``Kneser graphs'' of different set systems. For our setting, the set systems here are represented by elements in some poset $\mathcal{P}$. The advantage of using $\mathcal{P}$ is that many set systems come equipped with a natural (partial) ordering. In the classical case, i.e., subsets of $[n]$ ordered by inclusion, this is the Boolean lattice as mentioned previously. The reader may be enticed to use and study \Cref{def:kneser_hypergraph_poset} for their posets. If so, we ask the reader to also consider another approach to Kneser graphs concerning poset reconstruction.

We mentioned earlier how the generalized Kneser hypergraphs for posets, or just the Kneser graphs for posets $\mathcal{P}$ with $\hat{0}$, keep track of ``disjointness'' of chains in $\mathcal{P}$. Notably, we have the following proposition, which follows easily from \Cref{def:kneser_poset}.

\begin{proposition}[Disjointness of chains]
    Consider the Kneser graph $KG(k)$ of a poset $\mathcal{P}$ with distinct vertices $x$ and $y$. Maximal chains $\mathcal{C}_x$ in $x_\downarrow$ and maximal chains $\mathcal{C}_y$ in $y_\downarrow$ are pairwise disjoint except at $\hat{0}$, i.e., $\mathcal{C}_x \cap \mathcal{C}_y = \hat{0}$, if and only if there is an edge between $x$ and $y$.
\end{proposition}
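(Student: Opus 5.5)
The statement is a direct unpacking of \Cref{def:kneser_poset}. The key observation is that the union of all maximal chains of $x_\downarrow$ is exactly $x_\downarrow$. Every $z \leq x$ lies on some maximal chain of $x_\downarrow$: take a chain $\hat{0} \leq \dots \leq z \leq \dots \leq x$ and extend it to a maximal one. This is possible since $\mathcal{P}$ is finite and $x_\downarrow = [\hat{0}, x]$ has $\hat{0}$ as least element and $x$ as greatest element. The same holds for $y_\downarrow$. Consequently, intersecting all maximal chains of $x_\downarrow$ with all maximal chains of $y_\downarrow$ amounts to intersecting $x_\downarrow$ with $y_\downarrow$.

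For the forward direction, assume $\mathcal{C}_x \cap \mathcal{C}_y = \hat{0}$ for every maximal chain $\mathcal{C}_x$ of $x_\downarrow$ and every maximal chain $\mathcal{C}_y$ of $y_\downarrow$. Take any $z \in x_\downarrow \cap y_\downarrow$. By the observation, $z$ lies on some maximal chain $\mathcal{C}_x$ of $x_\downarrow$ and on some maximal chain $\mathcal{C}_y$ of $y_\downarrow$. Hence $z \in \mathcal{C}_x \cap \mathcal{C}_y = \{\hat{0}\}$. So $x_\downarrow \cap y_\downarrow = \{\hat{0}\}$, and by \Cref{def:kneser_poset} $x$ and $y$ are adjacent in $KG(k)$.

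For the converse, assume $x$ and $y$ are adjacent, so $x_\downarrow \cap y_\downarrow = \{\hat{0}\}$. Any maximal chains satisfy $\mathcal{C}_x \subseteq x_\downarrow$ and $\mathcal{C}_y \subseteq y_\downarrow$, so $\mathcal{C}_x \cap \mathcal{C}_y \subseteq \{\hat{0}\}$. Both chains contain $\hat{0}$, because a maximal chain of $[\hat{0}, x]$ must contain the least element. Therefore $\mathcal{C}_x \cap \mathcal{C}_y = \hat{0}$.

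The hypothesis that $x \neq y$ have the same height $k \geq 1$ is only needed so that $x$ and $y$ are vertices of $KG(k)$; it plays no role in the equivalence itself.

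The only step needing care is the extension argument, namely that every element of $[\hat{0}, x]$ lies on a maximal chain of that interval. The quantifiers in the statement must also be read as "for all maximal chains." Under the weaker reading "some pair of maximal chains is disjoint except at $\hat{0}$," the forward direction fails, so I would state the universal reading explicitly at the start.
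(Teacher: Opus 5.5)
Your proof is correct and takes essentially the paper's approach. The paper gives no separate argument beyond remarking that the proposition follows easily from \Cref{def:kneser_poset}, and your unpacking (every element of $x_\downarrow$ lies on some maximal chain of $[\hat{0},x]$, so these chains cover $x_\downarrow$) is exactly the routine verification it leaves implicit. Your explicit universal reading of the quantifiers is also a worthwhile clarification, since under the existential reading the forward direction does fail: for instance, take atoms $a,b,c$ with $x$ covering $a,b$ and $y$ covering $b,c$.
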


So, in the poset setting, Kneser graphs are ``disjointness'' graphs. Then, the generalized Kneser hypergraphs from \Cref{def:kneser_hypergraph_poset} are graphs that ``measure'' how disjoint the chains in some downsets are. Informally, when $\mathcal{P}$ is a lattice $\mathcal{L}$, these graphs, depending on the choice of $r, s,$ and $k$ in \Cref{def:kneser_hypergraph_poset}, tell us where the meet of some elements at a certain height is in $\mathcal{L}$. Since we define Kneser graphs for any $h(x_\downarrow) = k$ for $1 \leq k \leq h(\mathcal{P})$, the natural next question is, given all Kneser graphs of $\mathcal{P}$, that is, the Kneser graph for each value $k$, is it possible to determine or reconstruct some posets $\mathcal{P}$, especially when $\mathcal{P}$ is a lattice $\mathcal{L}$? 

The short and boring answer is yes. We say "boring" to emphasize that chains are trivially lattices determined by Kneser graphs with only one vertex. Furthermore, note that the relation $x_\downarrow \cap y_\downarrow = \hat{0}$ for $h(x_\downarrow) = h(y_\downarrow)$, and more generally $h(x_\downarrow \cap y_\downarrow) < h(x_\downarrow) = h(y_\downarrow)$, is finer than the incomparability condition. Simply put, vertices $x$ and $y$ in Kneser graphs are always incomparable elements in $\mathcal{P}$. So, the question above of determining or reconstructing $\mathcal{P}$ is really about using only specific subgraphs of the incomparability graph to determine or reconstruct $\mathcal{P}$. Since the incomparability graph of $\mathcal{P}$ need not be unique, i.e., two non-isomorphic posets $\mathcal{P}$ may have the same incomparability graph, the scope of interesting posets for the question we posed is even more limited.

For example, if we take all elements instead of just elements at a certain height with the relation $x_\downarrow \cap y_\downarrow$, we recover the zero-divisor graph (for further reading, we recommend the survey~\cite{AB}). For determining posets, the zero-divisor graph is very ineffective as it can not differentiate between chains and lattices where only one element covers $\hat{0}$. As ideas go, this is when we have a light bulb moment: what if we use Kneser graphs in combination with other subgraphs of the incomparability graph? 

Notably, distinct posets $\mathcal{P}$ and $\mathcal{Q}$ may have the same zero-divisor graph, but not the same Kneser graphs, and vice versa. Indeed, the zero-divisor graph records ``disjointess'' across all elements, while Kneser graphs are limited to elements at the same height. In other words, Kneser graphs act as another incomparability invariant. Perhaps unsurprisingly to the reader given \Cref{fig:sk_tam_3} and \Cref{sec:tam}, a good example of posets $\mathcal{P}$ and $\mathcal{Q}$ with the same Kneser graphs but different zero-divisor graphs, and more generally incomparability graphs, is $\mathcal{P} = \mathrm{Dyck}_3$ and $\mathcal{Q} = \mathrm{Tam}_3$. We end this note with the following problem.

\begin{problem}[Posets determined by a pair of incomparability invariants]
    Find pairs of incomparability graphs, or subgraphs of the incomparability graph with all elements of a poset $\mathcal{P}$ except its minimal and maximal elements, and (generalized) Kneser (hyper)graphs that uniquely determine a poset $\mathcal{P}$, or more generally, a class of posets $\mathcal{P}$.  
\end{problem}

\section*{Acknowledgement}

The author is grateful for many discussions with Clément Chenevière, Titouan Galor, and Viviane Pons with regard to \Cref{sec:sk_tam}. The author thanks, in advance, those who aid him in the contents and the writing of his full works on \Cref{sec:sk_tam} and \Cref{sec:alt}. They will be given their due as the list of people grows. The author apologizes if he misses anyone for this preliminary version.

\end{document}